\documentclass[12pt]{article}
\usepackage[applemac]{inputenc}
\usepackage{lscape}
\usepackage[dvips]{graphicx}
\usepackage[psamsfonts]{amssymb}
\usepackage[all]{xy}
\usepackage{amsmath,amsthm}

\usepackage{color}

\newtheorem{Def}{Definition}[subsection]
\newtheorem{Defs}[Def]{Definitions}

\newtheorem{Lem}[Def]{Lemma}
\newtheorem{Pro}[Def]{Proposition}
\newtheorem{Cor}[Def]{Corollary}

\newtheorem{Rem}[Def]{Remark}

\newtheorem{Not}[Def]{Notation}

\newcommand{\lra }{ \longrightarrow}

\newcommand{\bk}{l\!k}
\newcommand{\id}{\mbox{\rm id}}

\newcommand{\bB}{\bar {\mathbb B}}
\newcommand{\hH}{H\!H}

\makeindex

\begin{document}

 \centerline{\Large On the   closed geodesics problem }
 \vspace{2mm}

\centerline{ Bitjong Ndombol \footnote{Facult\'e des Sciences, Universit\'e de Yaound\'e 1,  Cameroun; bitjongndombol@yahoo.fr} $\,$   
}

\begin{abstract}  Let $\bk $ be a field of characteristic $p\geq 0$ and $X$  a  simply connected finite  CW complex. In this text, we prove that: {\sl if  the cohomology algebra $H^*(X;\bk)$ is generated, as an algebra, by at least two linearly independent  elements, then the  sequence  of Betti numbers $ \left( \dim H^n(LX;\bk)\right)_{n\geq 1 }$
grows unbounded.} This provides a complete solution  of the closed geodesics problem.
\end{abstract}
\vspace{5mm}

  \noindent {\bf AMS Classification} : 
16E40 (primary)  16E45,  55P35 , 55Pxx,  57T30,  57T05 (secondary).

    \vspace{2mm}
    \noindent {\bf Key words} : (co)bar construction, closed geodesics, Hochschild homology,  free loop space, loop suspension, spectral sequence.

\vspace{5mm}
\section{Introduction.} 
 \emph{ When does a simply connected closed 
Riemannian manifold admit infinitely many geometrically distinct  closed geodesics? } This is the closed geodesics problem in Riemannian Geometry.

 An  answer is  given by the following  theorem of  Gromoll and Meyer. 
 \vspace{3mm}
 
 For any space $X$, let $LX :=\mbox{Map}(S^1,X)$ be  the free loop space  equipped  with the compact-open topology.

 \paragraph{Theorem     \cite{GM}}{\sl Let $M$ be a simply connected closed Riemannian manifold. If there exists a field $\bk$  such that the sequence of Betti numbers $(\mbox{dim}H_n(LM; \bk))_{n\geq1}$ is unbounded, then, for each Riemannian metric, there exist infinitely many geometrically distinct closed geodesics on $M$.}
 
  In the work presented here, we are mainly concerned  with  the 

{\bf  Topological conjecture:} \emph{  Let $\bk $ be a field and $X$  a  simply connected  finite CW complex. The sequence  of Betti numbers $ \left( \dim H^n(LX;\bk)\right)_{n\geq 0 }$
is unbounded if and only if  the cohomology algebra $H^*(X;\bk)$ is generated, as an algebra, by at least two linearly independent  elements.}
 
 The proof that, {\sl If the sequence  of Betti numbers $ \left( \dim H^n(LX;\bk)\right)_{n\geq 0 }$
is unbounded, then the cohomology algebra $H^*(X;\bk)$ is generated, as an algebra, by at least two linearly independent  elements} is obvious (See \cite{HV}, Introduction). The converse    has been proved for the following particular  cases:
\begin{enumerate}
\item $X$ is a simply connected CW complex of finite type and   $\bk = \mathbb Q$ (\cite{VS}),
\item $\bk=\mathbb F_p$ and $X$ is  $\mathbb F_p$-elliptic  (\cite{JM}), 
\item $X$ is $\bk$-formal  where $\bk$ is any field  (\cite{HV}),
\item $X$ is $n$-connected and $\mbox{char}(\bk)\geq \mbox{dim}(X)/n$   (\cite{HV}),
\item $X$ is a homogeneous space and $\bk$ is any field (\cite{MZ}),
\item  $X$ is   a connected sum $M_1\#M_2$  of two manifolds $M_1$ and $M_2$   when  $\bk$  is a  field such that neither  $M_1$ nor $M_2$ has the same cohomology with coefficients in $\bk$ as a sphere \cite{Lam}.
\end{enumerate} 
\vspace{3mm}
 In this text we prove this topological conjecture, that is:
  \paragraph{Main  Theorem } {\sl  If $X$ is a simply connected finite CW complex such that the cohomology algebra   $H^\ast(X;  \bk)$  of $X$ has at least two linearly independent generators as an algebra,
then the sequence $(dim H^n(\Omega X;\bk))_{n\geq 1}$ of the Betti numbers of $\Omega X$, the  loop space of $X$ and the sequence  $(dim H^n(LX;\bk))_{n\geq 1}$ of the Betti numbers of $LX$, the free loop space of $X$,  grow unbounded. }

\vspace{3mm}

The unbounded  growth of the Betti numbers of $\Omega X$ is proved in \cite{Mc} with the use of Bockstein spectral sequences and $\infty$-implications.
\vspace{3mm}
\paragraph{ Corollary} {\sl  Let $X$ be a simply connected closed Riemannian manifold. If $H^*(X; \bk)$ is generated, as an algebra, by at least two linearly independent elements, then for each Riemannian metric there exist infinitely many geometrically distinct closed geodesics on $X$.}
\vspace{3mm}

To solve the {\bf Topological conjecture}, we need auxiliary results. For this purpose,  we recall some definitions.

Suppose that  the cochain complex  $ ( C, d_C)$    be  the mod$_p$ reduction of the cochain complex $( \hat C, d_{\hat C})$ with coefficients in $\mathbb Z$.

\vspace{5mm}

We recall  the universal coefficients theorem 
 $$H^n( C, d_C) \cong H^n(\hat C, d_{\hat C})\otimes \mathbb F_p \oplus s\mbox{Tor}(H^{n+1}(\hat C, d_{\hat C}), \mathbb F_p) = H_0^n( C, d_C) \oplus H_1^n( C, d_C)$$ where $s$ is the suspension and $\mbox{Tor}(H^{n+1}(\hat C, d_{\hat C}), \mathbb F_p)$ is the mod$_p$ reduction of the submodule of  $p$-torsion  of  $H^{n+1}(\hat C, d_{\hat C})$.
  
  \vspace{3mm}
  
 Throughout  this text, $(A, d_A)$ is   the DG algebra of normalized singular cochains on  a finite CW complex  $X$  with coefficients in the prime field $\mathbb F_p$.

 Thus $(A, d_A)$ is such that $A:=\{A^k\}_{k\geq 0}$ and  $\deg d = + 1$  satisfying the  hypothesis 
 \begin{eqnarray}
 H^0(A, d_A)= \mathbb F_p\,, \quad H^1(A, d_A)=0 \,, \quad \, \dim H^i(A, d_A)<\infty\,, H^i(A, d_A)=0, i\geq n.
\end{eqnarray}

 \vspace{2mm}
 \paragraph{ Theorem 1} {\sl Let  $(A, d_A)$ be   the DG algebra of normalized singular cochains on  a finite CW complex   $X$ with coefficients in the prime field $\mathbb F_p$.
 
 If $\beta\in H_1^*(A, d_A)\smallsetminus\{0\}$, $\beta=s\alpha$, such that  
 
  $\sigma(\alpha)\neq 0$,
  then
 there exist two infinite sequences  $(\chi_n)_{n\geq 1}$ and $(\psi_n)_{n\geq 1}$  in $H^*\bB (A, d_A)\smallsetminus\{0\}$ such that for every $n\geq 1$, $\psi_n\in H^{even}\bB (A, d_A)$ and  $\chi_n\in H^{odd}\bB (A, d_A)$.}
 
 \vspace{2mm}

 \vspace{2mm}
 \paragraph{ Theorem 2} {\sl Let  $X$ be   a finite  CW complex  $X$  with coefficients in the prime field $\mathbb F_p$.
 If $H^*_1(X; \mathbb F_p)\neq \{0\}$, then:
\begin{enumerate}
\item the sequence  $(dim H^n(\Omega X, \mathbb F_p))_{n\geq 1}$ grows unbounded where $\Omega X$ is the loop space of $X$,  
 \item the sequence $(H^n(LX; \mathbb F_p))_{n\geq 1}$ grows unbounded where  $LX$ is the free loop space of $X$. 
\end{enumerate}}

\vspace{3mm}
Section 2 is a recollection  of algebraic settings. 
  Theorem 1 is proved in section 3 while the proof of Theorem 2 is provided in Section 4.

 In  section  5, we apply  these algebraic  results  to   prove the main  Theorem.

\vspace{3mm}

In this paper,  results are over the field  $\bk$ of characteristic $p\geq 0$, even if the proofs are done    with  the prime field  $\mathbb F_p$, $p\geq 2$.

 \vspace{3mm}

   \tableofcontents 


  \section{ Algebraic preliminaries} 
In this section, $\bk$ is a commutative ring with unit.

 \subsection{The Adams reduced bar construction}

   Let  $(A, d)$ be a  supplemented DG algebra, that is $A=\bk \oplus \bar A$.
    
    The \emph{Adams reduced  bar construction}, $\bB (A, d_A):=\left(\left\{ \bB_k(A)\right\}_{k\geq 0}, \delta\right)$ is defined as follows:
    \begin{enumerate}
 \item  $\bB _k (A) =   
    (s\overline A)^{\otimes k} $ where $(sA)^{i+1} = A^i$. 
        \item A generic element of  $\bB _k   A$   is written as a sum of elements of the form $[a_1|a_2\cdots|a_k]$ with
     $$ \begin{array}{ll}
     \deg [a_1|a_2\cdots|a_k] &=  \sum_{i=
    1}^k \deg s a_i \\
    &=  \sum_{i=
    1}^k \deg a_i   - k \,.
    \end{array} $$ 
    \item  The differential $\delta = \delta'+\delta''$ is defined by
    the two  differentials,
\begin{eqnarray}
 \left\{ 
 \begin{array}{llll}
\delta':  \bB_k A\rightarrow   \bB_k A\,, \quad & \delta'[a_1|\cdots |a_k] &= -\sum_{i=1}^k(-1)^{\epsilon_i}[a_1|..|d_Aa_i|\cdots|a_k]\\
\delta'':  \bB_k A\rightarrow   \bB_{k-1} A\,, \quad& \delta''[a_1|\cdots|a_k] &= \sum_{i=1}^k(-1)^{\epsilon_i}[a_1|\cdots|a_{i-1}a_i|\cdots|a_k]
\end{array}
\right.
\end{eqnarray}
\end{enumerate}
where $\epsilon_1=0$ and $\epsilon _i = \deg(sa_1)+\deg(sa_2)+\cdots +\deg(sa_{i-1})$,  $i\geq 2$.

 \vspace{3mm}

 \subsection{Adams reduced cobar construction}

   Let  $\mathcal C:=(C, d_C)$ be a differential graded coalgebra   (for a definition of a DG coalgebra see \cite{M}).
   
  Assume that $\mathcal C$ is such that, 
 $$C=\{C^i \}_{i\in \mathbb N} \,, \qquad d_C: C^i \to C^{i+1}\,.$$
 Let  $\varepsilon : C \to \bk$ be the co-unit of $   C $ and $\Delta_C : C \to C\otimes C$ the coproduct of $C$.
 We also assume that the DG coalgebra $C$ is coaugmented,  that is, there exists a homomorphism of DG coalgebras 
 $$\eta : \bk \to C$$
 called  a coaugmentation. We set 
 $
 \bar C = \mbox{coker}\, \eta\,
 $ and write, by abuse,  $ C = \bk  \oplus \overline C$ identifying $\eta(1)$ with 1. 
 
 \vspace{2mm} Observe that:
$$
\begin{array}{l}
 \renewcommand{\arraystretch}{1.6}
\left\{
\begin{array}{ll}
\Delta_C(1)&=1\otimes 1\\
\Delta_C (\bar C) &\subset \bar C \otimes C + C \otimes \bar C= (\bar C \otimes \bk )\oplus (\bk \otimes \bar C) \oplus (\bar C \otimes \bar C)\\
\end{array} 
 \right.
\\
d_C (\bar C) \subset \bar C
 \,.
 \renewcommand{\arraystretch}{1}
  \end{array}
  $$
 We  shall write, for $c \in \bar C$:
$$
 \Delta_C (c) = c\otimes 1 + 1 \otimes c + \bar \Delta_C c\,, \qquad \bar \Delta _C = \displaystyle \sum _{i} c'_i\otimes c''_i  \quad \mbox{ with }\quad  c'_i , c''_i \in \bar C\,,
 $$
 where $\bar \Delta $ is called the \emph{reduced coproduct}.
 
  \vspace{2mm} 
   Before entering upon  the definition of the  \emph{reduced  Adam's cobar construction}, let us introduce suitable  notations to  describe elements in the  tensor algebra $T^a(s^{-1} V)$ freely generated by the \emph{desuspension} of a graded vector space $V$.
   
  A generic element of  $T^a(s^{-1} V)$ is a sum of monomial elements,
    $$
    \langle v_1|v_2|\dots |v_k\rangle   
    := \left\{ 
    \begin{array}{ll}
    1\in \bk \cong T^0(s^{-1} V) &\mbox{ if } k=0\\
    s^{-1} v_1\otimes s^{-1}v_2\otimes \dots s^{-1}v_k  \in T^k(s^{-1}\bar V)  &\mbox{ if } k\geq 1
    \end{array}
    \right.\,,
    $$
  and 
  $$
  \deg \langle v_1|v_2|\dots |v_k\rangle =  \sum_{i=1}^k \deg s^{-1}v_i= 
  \sum_{i=1}^k \deg v_i + k\,.
  $$

        The  {\it normalized} cobar construction  of  the DG coalgebra $\mathcal C=(C,d_C)$   is the  DG  algebra $\bar \Omega(\mathcal C)= \left(T^a(s^{-1}\overline C),  d:=d'+ d'' \right)$ 
   with      
     $$
      \renewcommand{\arraystretch}{1.6}
    \begin{array}{ll}
   d' \langle c_1c_2\vert\cdots \vert c_{k}\rangle&=
 \displaystyle\sum_{j=1}^{k}
   (-1)^{\varepsilon_i } \langle
   c_1|c_2\vert\cdots\vert d_Cc_i\vert. \cdots\vert c_{k}\rangle \\
    d''\langle c_1c_2\vert\cdots \vert c_{k}\rangle&=
 \displaystyle\sum_{i=1}^{k}\sum_{j\in J}
   (-1)^{\varepsilon_i+\vert c'_{i_j}\vert}\, \langle
   c_1|c_2\vert\cdots\vert c'_{i_j}\vert c''_{i_j}\vert\cdots\vert
   c_{k}\rangle 
   \end{array}
 \renewcommand{\arraystretch}{1}
   $$
  with $\varepsilon_1=0$,  $\varepsilon_i = \deg(   c_1)+\deg( c_2)+\cdots+
  \deg( c_{i-1})+i-1 $ for $i\geq 2$ and  $\bar \Delta c_i=\sum_{j\in J}  
c'_{i_j}\otimes c''_{i_j}$ as in formula (3).
  
\vspace{2mm}
 Recall that  a coaugmented   graded  coalgebra $C$  
is  \emph{locally  conilpotent} if for each   $x
 \in \overline{C}$, there exists a positive integer  $k$ such that   the reduced coproduct $\overline{\Delta}^{(k)}
 x = 0$.   For instance,  the \emph{free coalgebra}  $T(sV)$, such that 
$V^{< 0} = 0$ is   a locally  conilpotent  DG coalgebra for the coproduct  
$$
 \Delta \langle v_1| v_2| \cdots | v_k\rangle=
 \displaystyle \sum_{i= 1}^{k-1}  \langle v_1|  \cdots | v_i\rangle
 \otimes \langle v_{i+1} | \cdots | v_k\rangle \,.
$$
 Therefore, if $(A, d_A)$ is a cochain algebra, then  $\bB(A, d_A)$ is a 1-reduced  conilpotent  DG coalgebra.
 
The reduced coproduct on the DG coalgebra $\bB (A, d_A)$ is given by 
\begin{eqnarray}
 \Delta [ a_1| a_2| \cdots | a_k]=
 \displaystyle \sum_{i= 1}^{k-1}  [ a_1|  \cdots | a_i]
 \otimes [ a_{i+1} | \cdots | a_k] \,.
\end{eqnarray}

 When  ${ \sc DGA}$ (resp. ${\sc NDGC}$)  denotes the category of  DG algebras (respectively locally conilpotent  DG-coalgebras), 
 the functors 
$$
\bB : {\sc DGA}  \lra  {\sc NDGC}  \quad \mbox{and} \quad \bar \Omega :   {\sc NDGC} \lra  {\sc DGA}$$ are \emph{adjoint functors} (see \cite[Proposition 2.11]{ACC}).

We  denote by $\alpha_ A$ the unit of this adjunction, that is,  we have  the surjective homomorphism of DG algebras 
\begin{eqnarray}
\xymatrix{
\overline{ \Omega} \bB  (A, d_A) \ar@{->>}[rr]^{\alpha_{A}}&& (A, d_A)}\,, \quad  <[a_1|a_2|\dots|a_n]> \mapsto \left\{ 
\begin{array}{ll} 
1 &\mbox{ if } n= 0\\
 a_1 &\mbox{ if  } n=1 \\
  0 &\mbox{ otherwise} \\
\end{array}
\right.\,. 
\end{eqnarray}
      It is well known that $\alpha_A$ is a quasi-isomorphism (see  \cite[Proposition 2.14]{ACC} or \cite[Corollary 2.15]{Mu}).

 \vspace{3mm}

\subsection{Further algebraic tools} 
  \begin{Def}  Let $(A, d_A)$ be a DG algebra. A \emph{Kraines   sequence} of length $n$  starting at $a_1\in A^{odd} $ is a  
  sequence  $a_1, a_2, \cdots , a_{k}, \cdots a_n$  which satisfies, for all $1\leq k\leq n$,
\begin{eqnarray}
d_A a_k = 
\left\{ 
\begin{array}{cl}
0 &\mbox{ if } k=1\\
\sum _{j=1} ^{k-1}  a_j a_{k-j}  &\mbox{ if } k\geq 2\,.
\end{array} \right.
\end{eqnarray}
If a sequence $(a_k)_{k\geq 1}$ satisfies (5) for every $k\geq 1$, it is an infinite \emph{Kraines sequence}.
 \end{Def}

\vspace{3mm}
 The term \emph{Kraines sequence} refers to the sequence defined by  D. Kraines  (\cite{Kr1}, Definition 11) with a different  sign.

\begin{Def}(\cite{Mc2} Definition 8.14)  A  DG algebra with \emph{cup one product}  consists of an $\bk$-dga $A=\{A^k\}_{k\geq 0}$  endowed with  a degree -1  $\bk$-linear map, 
$$
\smile_1: A  \otimes  A\lra  A\,, 
 $$
 which vanishes  in degree 0 and satisfies the identities
 \begin{eqnarray}
d_A (a\  \smile_1b )= ab  - (-1)^{\mbox{deg}(a)\mbox{deg}(b)}  ba - (d_A a) \smile_1 b
 -(-1^{\mbox{deg} (a)}  a  \smile_1 d_Ab\,,
 \end{eqnarray} 
 \begin{eqnarray}
(ab)\smile_1c = (-1)^{\mbox{deg}(a)}a(b\smile_1c) + (-1)^{\mbox{deg}(b)\mbox{deg}(c)} (a\  \smile_1 c)b\,.
\end{eqnarray}  
\end{Def}

\vspace{2mm}
\begin{Lem}(\cite{S-E}){\sl Let $X$ be a topological space and $\bk$ a commutative ring with unit. The  DG algebra $(A, d_A) =C^*(X; \bk)$ of normalized singular cochains on $X$  with coefficients in $\bk$ has a \emph{cup-one product} .}\end{Lem}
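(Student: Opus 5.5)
The plan is to construct the cup-one product explicitly at the level of simplices, via the classical Steenrod cup-$i$ construction, and then check the two identities (6) and (7). The construction is natural in $X$ and defined over $\mathbb Z$. So it suffices to work with $\mathbb Z$-coefficients and tensor with $\bk$ at the end; the universal coefficient identification recalled in the Introduction makes this harmless.

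\emph{Step 1: the formula.} For $a\in A^p$, $b\in A^q$ and a singular simplex $\sigma:\Delta^{p+q-1}\to X$, set
$$(a\smile_1 b)(\sigma)=\sum_{0\le i_0<i_1\le p+q-1} \pm\, a\bigl(\sigma|[0,\dots,i_0,i_1,\dots,p+q-1]\bigr)\, b\bigl(\sigma|[i_0,\dots,i_1]\bigr),$$
where the sum is restricted to those pairs for which the front face has dimension $p$ and the inner face has dimension $q$. This is Steenrod's cup-one product, and the signs follow the standard conventions. Equivalently, one takes a natural chain homotopy $D_1:C_*(X)\to C_*(X)\otimes C_*(X)$ between the Alexander--Whitney diagonal $\Delta$ and $T\circ\Delta$, and dualizes it.

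\emph{Step 2: normalization and degree zero.} If $\sigma$ is degenerate, every face pair in the formula contains a degenerate face. Hence $a\smile_1 b$ vanishes on degenerate simplices whenever $a$ and $b$ are normalized, so the operation preserves normalized cochains. The map has degree $-1$. It vanishes when $a$ or $b$ has degree $0$, because the inner face $[i_0,\dots,i_1]$ always has at least two vertices and the front face has the same property.

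\emph{Step 3: identity (6).} This is the dual of the chain homotopy relation $\partial D_1+D_1\partial=\Delta - T\Delta$. I would justify that relation by acyclic models. Both $\Delta$ and $T\Delta$ are natural chain maps that agree in degree $0$. The functor $C_*(-)\otimes C_*(-)$ on simplices is acyclic on the models $\Delta^n$. So a natural homotopy exists, and the explicit formula of Step 1 realizes one. Dualizing, with the Koszul sign convention, gives exactly (6).

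\emph{Step 4: identity (7), the main obstacle.} The Hirsch formula is not automatic from acyclic models, because it is a strict identity rather than a statement up to homotopy. It must be checked on the explicit formula. The reasoning is as follows. For $a\in A^p$, $b\in A^{p'}$, $c\in A^q$, evaluating $(ab)\smile_1 c$ on a simplex cuts the front face $[0,\dots,i_0,i_1,\dots,n]$ at vertex $p+p'$ via the Alexander--Whitney formula. There are two cases. If the cut vertex lies at or before $i_0$, the term is the term $a\,(b\smile_1 c)$. If it lies at or after $i_1$, the term is the term $(a\smile_1 c)\,b$. No mixed terms arise, because the front face of $a\smile_1 c$ is a block of consecutive vertices. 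The remaining work is matching the signs $(-1)^{\deg a}$ and $(-1)^{\deg b\deg c}$, which is tedious but routine. Alternatively, I would cite this as the classical Hirsch formula (Steenrod; see \cite{S-E}), which is the route the statement suggests.
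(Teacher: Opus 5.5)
Your proposal is correct and essentially matches the paper, which gives no argument beyond citing Steenrod--Epstein for Steenrod's cup-one product. Your explicit face formula, the normalization and degree-zero checks, and the case split on the Alexander--Whitney cut vertex in Step 4 are the standard verification. Two small repairs are needed. First, the acyclic-models argument in Step 3 only produces \emph{some} natural homotopy, so (6) must be checked directly on your specific formula, as you do for (7); an arbitrary acyclic-models homotopy need not satisfy the strict identity (7). Second, drop the detour through $\mathbb Z$-coefficients: for singular cochains $C^*(X;\bk)\neq C^*(X;\mathbb Z)\otimes\bk$, whereas your formula and both identities hold verbatim with values in any $\bk$.
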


\vspace{3mm} 

   \begin{Not}\begin{enumerate}
   
   \item  In the following,  $(\hat A, d_{\hat A})= C^*(X; \mathbb Z)$ is the DG algebra of normalized singular  cochains on the finite CW complex $X$ with coefficients in $\mathbb Z$ and $ (A, d_A)=(\hat A, d_{\hat A})\otimes_{\mathbb Z}\mathbb F_p= C^*(X; \mathbb F_p)$, the DG algebra of normalized singular cochains on $X$ with coefficients in $\mathbb F_p$. We note  $\mbox{red}_p: \hat A\to A$  the reduction mod$_p$.
   \item We denote by  $\mbox{tor}H^*(\hat A, d_{\hat A})$  the $p$-torsion part of $H^*(\hat A, d_{\hat A})$
   \item Let $\epsilon\geq 1$ a positive integer.
   We note  $(A_{\epsilon}, d_{A_{\epsilon}})=(\hat A, d_{\hat A})\otimes_{\mathbb Z}\mathbb Z/p^{\epsilon}\mathbb Z$ and $\mbox{red}_{p^{\epsilon}}:(\hat A, d_{\hat A})\to (A_{\epsilon}, d_{A_{\epsilon}})$ the projection.
    \item  The homomorphism  $\beta_{\epsilon}: H^q(A_{\epsilon}, d_{A_{\epsilon}})\to H^{q+1}(A_{\epsilon}, d_{A_{\epsilon}})$ is the Bockstein homorphism associated to the short exact sequence $0\to \mathbb Z/p^{\epsilon}\mathbb Z\to \mathbb Z/p^{2\epsilon}\mathbb Z\to \mathbb Z/p^{\epsilon}\mathbb Z\to 0$.

   \item The  differential on $\bB(\hat A, d_{\hat A})$ is denoted $\hat \delta$ while the differential  on $\bB\mathcal A$ is denoted $ \delta$. 
  \item The coproduct is denoted $\Delta$ on $\bB A$, $\bB\hat A$ , $H^*\bB A$ and on $H^*\bB(\hat A, d_{\hat A})$.
  \item If $x\in  A \hspace{2mm}  (resp. \hat A, \bB A$, $\bB\hat A$) is a cocycle, the cohomology class of $x$ is denoted $\mbox{cls}(x)$. \end{enumerate}
   \end{Not} 
   \vspace{5mm}
We recall  the universal coefficients theorem 
  \begin{eqnarray}H^n(A, d_A) \cong H^n(\hat A, d_{\hat A})\otimes \mathbb F_p \oplus s\mbox{Tor}(H^{n+1}(\hat A, d_{\hat A}), \mathbb F_p) = H_0^n(A, d_A) \oplus H_1^n(A, d_A)\end{eqnarray} where $s$ is the suspension and $\mbox{Tor}(H^{n+1}(\hat A, d_{\hat A}), \mathbb F_p)$ is the mod$_p$ reduction of the submodule of  $p$-torsion  of  $H^{n+1}(\hat A, d_{\hat A})$.
  
  \vspace{3mm}
  
  \begin{Rem}  Let  $\beta\in H^*_1(A, d_A)$. The following assertions are equivalent:\begin{enumerate}
  \item  $\beta=s\alpha$, $\alpha\in H^*_0(A, d_A)\smallsetminus\{0\}$,
  \item  there exist $\hat b\in \hat A$ ,  an integer $\epsilon\geq 1$ such that $d_{\hat A}\hat b= p^{\epsilon}\hat a$ with $\mbox{cls}(\mbox{red}_p(\hat a))=\alpha$ and $\mbox{cls}(\mbox{red}_p(\hat b))=\beta$,
  \item $\beta_{\epsilon}(\mbox{cls}(\mbox{red}_{p^{\epsilon}}(\hat b)))= \mbox{cls}(\mbox{red}_{p^{\epsilon}}(\hat a)).$ 
  \end{enumerate} 
\end{Rem}
 \vspace{3mm}  
  \begin{Def} The cohomology loop suspension 
 is the graded linear map   
$$ \sigma =\{\sigma^n\}_{n\geq 1}, 
\sigma^{n+1} : QH^{n+1}(A, d_A)\lra PH^n(\bB (A, d_A))\,, \qquad \alpha \mapsto \mbox{cls} ( [a])
$$
where  $a \in A^{n+1}$ is a  cocycle representing $\alpha$, $QH^{n+1}(A, d_A)$ the indecomposable elements of $H^{n+1}(A, d_A)$ and $ PH^n(\bB (A, d_A))$ the primitive elements of  $H^n(\bB (A, d_A))$ .\end{Def}
   
\begin{Lem}{\sl Suppose that $(A, d_A)$ is the mod$_p$ reduction of the DG algebra $(\hat A, d_{\hat A})$ with coefficients in $\mathbb Z$ and let  $\beta \in H_1^*(A, d_A)$ such that $\beta =s\alpha$. 

If
$\sigma(\alpha)\neq 0$   then  $\sigma(\beta)\neq 0$.
}\end{Lem}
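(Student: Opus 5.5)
The plan is to compare the two suspended classes through the integral bar construction $\bB(\hat A, d_{\hat A})$ and its Bockstein structure. Let $a=\mbox{red}_p(\hat a)$ and $b=\mbox{red}_p(\hat b)$, with $d_{\hat A}\hat b=p^{\epsilon}\hat a$ as in the Remark. Then $[a]$ represents $\sigma(\alpha)$ and $[b]$ represents $\sigma(\beta)$ in $H^*\bB(A,d_A)$. The basic identity, read off from the definition of $\delta'$, is
$$\hat\delta[\hat b]=-[d_{\hat A}\hat b]=-p^{\epsilon}[\hat a],$$
while $[\hat a]$ is an integral cocycle of tensor length one. Consequently, in the Bockstein spectral sequence of the free $\mathbb Z$-complex $\bB(\hat A,d_{\hat A})$ (whose $E_1$-term is $H^*\bB(A,d_A)$, since $\bB$ commutes with $-\otimes\mathbb F_p$ for free modules), $\mbox{cls}([b])$ survives to $E_\epsilon$ and $d_\epsilon\,\mbox{cls}[b]=\pm\,\mbox{cls}[a]$. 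In other words, the loop suspension commutes with the Bocksteins $\beta_\epsilon$ up to sign.

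I argue by contradiction and suppose $\sigma(\beta)=0$, so that $[b]=\delta w$ for some $w\in\bB A$. Choose an integral lift $\hat w\in\bB\hat A$. Then $[\hat b]-\hat\delta\hat w=p\,y$ for some $y\in\bB\hat A$. Applying $\hat\delta$ and dividing by $p$ (the complex is torsion-free) gives
$$\hat\delta y=-p^{\epsilon-1}[\hat a].$$
If $\epsilon=1$, reducing mod $p$ gives $[a]=-\delta(\mbox{red}_p y)$, hence $\sigma(\alpha)=0$, which is a contradiction. For $\epsilon>1$, I iterate this step. The plan is to show that $\mbox{red}_{p}(y)$ is again, up to a boundary, a length-one element $[c]$ with $d_{\hat A}\hat c=\pm p^{\epsilon-1}\hat a$. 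This would contradict the minimality of $\epsilon$ in the Remark, which we may assume by choosing $\epsilon$ to be the exact order of the Bockstein. Alternatively, it would contradict the fact that $\alpha\in H_0^*$ reduces from a class of $H^*(\hat A)\otimes\mathbb F_p$ that is not divisible to this order.

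The main obstacle is exactly this descent for $\epsilon>1$. A priori $y$ lives in arbitrary tensor length, and $\mbox{cls}[a]$ could be killed in the Bockstein spectral sequence at some page $r<\epsilon$ by a class not coming from length one. To handle this, I would use the tensor-length filtration of $\bB$ together with the fact that $\delta''$ strictly lowers length while $\delta'$ preserves it. Projecting the equation $\hat\delta y=-p^{\epsilon-1}[\hat a]$ to length one gives $-d_{\hat A}y_1+\mu(y_2)=-p^{\epsilon-1}\hat a$, where $\mu$ is the multiplication of the length-two component. I would then use $H^1=0$ and connectivity to absorb $\mu(y_2)$, so that $\hat a$ becomes $p^{\epsilon-1}$-divisible in $H^*(\hat A)$ modulo decomposables. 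This would give $\alpha$ decomposable or of smaller Bockstein height, contradicting $\sigma(\alpha)\neq0$, since $\sigma$ vanishes on decomposables. The cup-one product of Lemma~(S-E) would be used to control the signs and commutators arising from $\mu(y_2)$ at odd primes. If this filtration argument resists, the fallback is to assume directly $\epsilon=1$ after replacing $\hat b$ by a representative realizing the first nonzero Bockstein, which is the only case the later theorems seem to need.
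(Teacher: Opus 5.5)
Your $\epsilon=1$ argument is correct, and it is the same mechanism as the paper's proof. The paper records $\hat\delta[\hat b]=\pm p^{\epsilon}[\hat a]$ and concludes in one line that $\mbox{cls}([b])=s\,\mbox{cls}([a])=s\sigma(\alpha)\neq0$. That last step tacitly assumes exactly what you single out as the obstacle: that $\mbox{cls}([\hat a])$ keeps order $p^{\epsilon}$ in $H^*\bB(\hat A,d_{\hat A})$, i.e.\ that $\sigma(\alpha)$ is still nonzero in $E_\epsilon$ of the Bockstein spectral sequence of $\bB\hat A$. Granting that, your observation $d_\epsilon\,\mbox{cls}([b])=\pm\mbox{cls}([a])$ finishes the proof. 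So you are not missing an idea that the paper provides.

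The real problem is that for $\epsilon\ge2$ no descent can work, because the statement is false there. Here is a counterexample.
\begin{itemize}
\item Let $W$ be the $6$-skeleton of $P^3(p)\times S^4$ (cells $e^0,e^2,e^3,e^4,e^2\times e^4$). Let $\phi\colon S^6\to W$ attach the top cell $e^3\times e^4$, and set $X=W\cup_{p\phi}e^7$.
\item The cellular duals $\hat b,\hat a$ of $e^6,e^7$ satisfy $\delta\hat b=\pm p^2\hat a$. Hence $H^6(X;\mathbb Z)=0$, $H^7(X;\mathbb Z)=\mathbb Z/p^2$, and $\beta=\mbox{cls}(\mbox{red}_p\hat b)\in H^6_1(X;\mathbb F_p)$ equals $s\alpha$ with $\epsilon=2$.
\item Restricting to $W$ shows $\beta=t\cdot w$ with $t\in H^2(X;\mathbb F_p)$ and $w\in H^4(X;\mathbb F_p)$. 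So $\sigma(\beta)=0$; on representing cocycles, $[tw]=\pm\delta[t|w]$.
\item Yet $\sigma(\alpha)\neq0$. In the Adams--Hilton model of $X$, the generator $x_6$ of $e^7$ has $dx_6=p\,z$, where $z$ is the Adams--Hilton differential of $e^3\times e^4$ in $P^3(p)\times S^4$; this holds because the attaching map is $p\phi$. So $x_6$ is a cycle mod $p$ projecting to $e^7$ in the indecomposables. Thus $e^7$ lies in the image of the mod $p$ homology suspension, and $\sigma(\alpha)\neq0$ by duality.
\end{itemize}

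This example also shows where your descent breaks. The map $X\to P^3(p)\times S^4$ that is the identity on $W$ and has degree $p$ on $e^7$ gives, integrally, $\hat u\hat w=\pm p\,\mbox{cls}(\hat a)$, where $\hat u\in H^3(X;\mathbb Z)\cong\mathbb Z/p$ and $\hat w\in H^4(X;\mathbb Z)$. That product is exactly the length-two term $\mu(y_2)$ you hoped to absorb, and it cannot be absorbed. Moreover, $p^{\epsilon-1}\hat a$ being decomposable does not make $\alpha=\mbox{red}_p(\hat a)$ decomposable, so no contradiction with $\sigma(\alpha)\neq0$ arises.

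Your fallback is not available either: $\epsilon$ is fixed by $\beta$ (here by $H^7(X;\mathbb Z)=\mathbb Z/p^2$), so you cannot choose a representative with $\epsilon=1$. What your argument proves, and what is actually true, is the lemma when $\epsilon=1$, or more generally whenever $\sigma(\alpha)$ survives to $E_\epsilon$.
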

\begin{proof}Since $\beta =s\alpha$, there exist $\hat b, \hat a\in \hat A$ and an integer $\epsilon\geq 1$ such that $d_{\hat A}\hat b= p^{\epsilon}\hat a$ with  $\beta= \mbox{cls}(\mbox{red}_p(\hat b))$ and $\alpha= \mbox{cls}(\mbox{red}_p(\hat a))$.

 Set $a=\mbox{red}_p(\hat a)$, $b=\mbox{red}_p(\hat b)$ and remark that $$\hat \delta [\hat b] = p^{\epsilon}[\hat a ]\quad \mbox{with}\quad \bB\mbox{red}_p([\hat a ])=[a].\quad (\star)$$

 Suppose that $\sigma(\alpha)=\mbox{cls}([a])\neq 0$. The equality $(\star )$ means that  $\sigma(\beta)= \mbox{cls}([b])=s\mbox{cls}([a])=s\sigma(\alpha)\neq 0$.

\end{proof}

\begin{Lem}{\sl  Let $\hat x\in \hat A$ such that  $d_{\hat A}\hat x=p^{\epsilon }\hat z$ with  $\mbox{red}_p(\hat z)\neq 0$, where $\epsilon\geq 1$ is an  integer. 

If  $\mbox{cls}(\mbox{red}_p(\hat x))\neq 0$ and  $\beta_{\epsilon}(\mbox{cls}(\mbox{red}_{p^{\epsilon}}(\hat x)))=\mbox{cls}(\mbox{red}_{p^{\epsilon}}(\hat z))=0$,  then there exists $\hat \zeta\in \mbox{ker}\hspace{1mm}\mbox{red}_p$ such that $d_{\hat A}(\hat x+\hat \zeta)=0$.

}\end{Lem}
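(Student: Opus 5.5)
The plan is to convert the vanishing of the Bockstein into an integral statement about the class of $\hat z$ in $H^*(\hat A, d_{\hat A})$, and then absorb the resulting correction into $\hat x$ by adding a multiple of $p$. Note first that $\hat z$ is an integral cocycle: $p^{\epsilon}d_{\hat A}\hat z=d_{\hat A}^2\hat x=0$, and $\hat A=C^*(X;\mathbb Z)$ is torsion free. So $\mbox{cls}(\hat z)\in H^*(\hat A, d_{\hat A})$ is defined, and $p^{\epsilon}\,\mbox{cls}(\hat z)=0$. Also, any element of $\mbox{ker}\,\mbox{red}_p$ has the form $p\hat\eta$. Hence what I must produce is $\hat\eta\in\hat A$ with $p\,d_{\hat A}\hat\eta=-p^{\epsilon}\hat z$, i.e. $d_{\hat A}\hat\eta=-p^{\epsilon-1}\hat z$. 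Equivalently, the target is the integral statement $p^{\epsilon-1}\mbox{cls}(\hat z)=0$.

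\textbf{Step 1.} Use the universal coefficients theorem (8) with $\mathbb Z/p^{\epsilon}\mathbb Z$ in place of $\mathbb F_p$. The map $H^*(\hat A)\otimes\mathbb Z/p^{\epsilon}\to H^*(A_\epsilon)$ is injective, so the hypothesis $\mbox{cls}(\mbox{red}_{p^\epsilon}(\hat z))=0$ gives $\mbox{cls}(\hat z)\in p^{\epsilon}H^*(\hat A)$. At cochain level this reads $\hat z=p^{\epsilon}\hat u+d_{\hat A}\hat w$ with $\hat u$ an integral cocycle.

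\textbf{Step 2.} Set $\hat x_1=\hat x-p^{\epsilon}\hat w$. The correction $-p^\epsilon\hat w$ lies in $\mbox{ker}\,\mbox{red}_p$, and $d_{\hat A}\hat x_1=p^{2\epsilon}\hat u$. I have thus replaced the data $(\hat x,\epsilon,\hat z)$ by $(\hat x_1,2\epsilon,\hat u)$ without changing $\mbox{red}_p(\hat x)$. In particular $\mbox{cls}(\mbox{red}_p\hat x_1)=\mbox{cls}(\mbox{red}_p\hat x)\neq0$.

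\textbf{Step 3.} Iterate Step 2. Since $X$ is a finite CW complex, $H^*(\hat A,d_{\hat A})$ is finitely generated, so its $p$-torsion has bounded exponent $p^N$. Once the exponent $2^k\epsilon$ exceeds $N$, the element $p^{2^k\epsilon-1}\mbox{cls}(\hat u_k)$ is automatically zero whenever $\mbox{cls}(\hat u_k)$ is torsion. At that point the target of the first paragraph is reached, the accumulated corrections together with $p\hat\eta$ give $\hat\zeta$, and $d_{\hat A}(\hat x+\hat\zeta)=0$.

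\textbf{Main obstacle.} The difficulty is justifying Step 3, and I do not yet see how. Re-applying Step 1 requires the Bockstein hypothesis for the new pair $(\hat x_k,\hat u_k)$, and nothing provides it. Worse, $\mbox{cls}(\hat u_k)$ need not be torsion at all; for instance $\hat u=0$ is possible. Read purely integrally, the statement even looks doubtful. If $H^{q+1}(\hat A)$ contains a summand $\mathbb Z/p^{2\epsilon}$ generated by $c_1$, and $\mbox{cls}(\hat z)=p^{\epsilon}c_1$, then all the hypotheses can be arranged, yet $p^{\epsilon-1}\mbox{cls}(\hat z)=p^{2\epsilon-1}c_1\neq0$. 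So I must either prove that $p^{\epsilon-1}\mbox{cls}(\hat z)=0$ using the unused hypothesis $\mbox{cls}(\mbox{red}_p\hat x)\neq0$, or reinterpret the conclusion. My first attempt would be the latter, via the Bockstein spectral sequence. Re-choose $\hat x$ within $\hat x+\mbox{ker}\,\mbox{red}_p$ so that $\epsilon$ is maximal, which is possible because the torsion exponent is bounded. The vanishing of $\beta_\epsilon$ then says that $\mbox{cls}(\mbox{red}_p\hat x)$ survives to $E_{\epsilon+1}$, and maximality of $\epsilon$ should force survival to $E_\infty$. Survival to $E_\infty$ is exactly the existence of an integral cocycle lifting $\mbox{red}_p\hat x$, i.e. the required $\hat\zeta$. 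I expect this maximality argument to be the step where the proof succeeds or fails.
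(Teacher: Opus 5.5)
\textbf{Where the proposal stands.} Your Step 3 is a genuine gap, and it cannot be closed. The example in your ``Main obstacle'' is a correct counterexample to the statement as written, so neither of your two proposed exits proves the lemma.

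\textbf{The counterexample made concrete.} Take $X=S^n\cup_{p^2}e^{n+1}$ with $n\geq 2$, and $\epsilon=1$. Let $\hat c$ be a cocycle generating $H^{n+1}(\hat A,d_{\hat A})\cong\mathbb Z/p^2$, and let $\hat y$ satisfy $d_{\hat A}\hat y=p^2\hat c$. Pick a singular $n$-cochain $\hat w$ with $d_{\hat A}\hat w\notin p\hat A$. Put $\hat x=\hat y+p\hat w$ and $\hat z=p\hat c+d_{\hat A}\hat w$. Then:
\begin{itemize}
\item $d_{\hat A}\hat x=p\hat z$;
\item $\mbox{red}_p(\hat z)=d_A(\mbox{red}_p\hat w)$ is nonzero but a coboundary, so $\beta_1$ vanishes;
\item $p^{\epsilon-1}\mbox{cls}(\hat z)=p\,\mbox{cls}(\hat c)\neq 0$.
\end{itemize}
By your own (correct) reduction in the first paragraph, no $\hat\zeta$ exists.

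\textbf{The hypothesis $\mbox{cls}(\mbox{red}_p\hat x)\neq 0$ cannot help.} It holds automatically whenever the conclusion fails. Indeed, if $\mbox{red}_p\hat x=d_A w$ and $\hat w'$ lifts $w$, then $d_{\hat A}\hat w'$ is a cocycle in $\hat x+\ker\mbox{red}_p$, so the conclusion holds.

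\textbf{A small correction to your diagnosis.} $\mbox{cls}(\hat u)$ is always torsion, since $p^{2\epsilon}\mbox{cls}(\hat u)=\mbox{cls}(d_{\hat A}\hat x_1)=0$. Also, $\hat u=0$ is the favourable case, not a bad one. The only obstruction to iterating Step 2 is the missing vanishing of $\beta_{2\epsilon}$, and in the example that is exactly what fails: there $\hat u=\hat c$, and $\beta_2$ does not vanish.

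\textbf{The paper's proof contains the error rather than the missing idea.} It argues that if $\mbox{cls}(\mbox{red}_p\hat x)\in H_1^*(A,d_A)$, then $\mbox{cls}(\mbox{red}_p\hat x)=s\,\mbox{cls}(\mbox{red}_p\hat z)$, which is zero by hypothesis. In the example, $H^n(A,d_A)=H^n_1(A,d_A)\cong\mathbb F_p$ and $\mbox{cls}(\mbox{red}_p\hat z)=0$, yet $\mbox{cls}(\mbox{red}_p\hat x)\neq 0$. The class that actually governs the lifting problem is the integral Bockstein $p^{\epsilon-1}\mbox{cls}(\hat z)\in H^*(\hat A,d_{\hat A})$, as you found. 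The hypothesis $\mbox{cls}(\mbox{red}_{p^\epsilon}\hat z)=0$ only gives $\mbox{cls}(\hat z)\in p^{\epsilon}H^*(\hat A,d_{\hat A})$. Separately, the paper's ``either $H_0$ or $H_1$'' dichotomy is not exhaustive for elements of a direct sum.

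\textbf{Your maximal-$\epsilon$ reformulation.} It is true, and your Steps 1--2 already finish it. At a finite maximal exponent, vanishing of $\beta_\epsilon$ would let Step 2 double the exponent, a contradiction. An exponent beyond the torsion exponent of $H^*(\hat A,d_{\hat A})$ yields a cocycle lift. But it is a different lemma: the Bockstein hypothesis is imposed at the maximal exponent, not at the given $\epsilon$, and it does not transfer. In the example the maximal exponent is $2$, where $\beta_2(\mbox{cls}(\mbox{red}_{p^2}\hat y))=\mbox{cls}(\mbox{red}_{p^2}\hat c)\neq 0$.

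\textbf{Consequence for the paper.} Where the paper uses this lemma (Lemma 3.2.1), $\epsilon_N$ is defined as an infimum, and nothing guarantees maximality there. So the corrected version would not apply without further argument. The honest outcome of your analysis is the counterexample, not a proof.
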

\begin{proof}It is enough to prove that $\mbox{cls}(\mbox{red}_p(\hat x))\in H^*_0(A, d_A)$.

 Suppose that $\mbox{cls}(\mbox{red}_p(\hat x))\in H_1^*(A, d_A)$. Then $\mbox{cls}(\mbox{red}_p(\hat x))=s\mbox{cls}(\mbox{red}_p(\hat z))= sH^*(\mbox{red}_p)(\mbox{cls}(\mbox{red}_{p^{\epsilon}}(\hat z))=0$. This is impossible, since $\mbox{cls}(\mbox{red}_p(\hat x))\neq 0$. 
 Thus  $\mbox{cls}(\mbox{red}_p(\hat x))\in H^*_0(A, d_A)$ and there exists $\hat \zeta\in \mbox{ker}\hspace{1mm}\mbox{red}_p$ such that $d_{\hat A}(\hat x+\hat \zeta)=0$.

\end{proof}
\begin{Rem} Let $\epsilon\geq 1$ be an integer and $\rho_A:\hat A\to \hat A/p^{\epsilon}\hat A$ the projection. \begin{enumerate}

\item The product  on $\hat A$ yields a product  on $\hat A/p^{\epsilon}\hat A$ given by  $\rho_A(\hat  x)\rho_A(\hat  y)= \rho_A(\hat x \hat y)$, for $\hat x, \hat y\in \hat A$, such that $\bar d_{\hat A}$ is a derivation where $\bar d_{\hat A}$ is the differential induced on $\hat A/p^{\epsilon}\hat A $ by $d_{\hat A}$. Hence $(\hat A/p^{\epsilon}\hat A, \bar d_{\hat A})$ is a DG algebra.
 \item Observe that the cup one product $\smile_1$ on $(\hat A, d_{\hat A})$ is $\mathbb Z$- linear and hence induces a
  cup one product  also noted $\smile_1$ on $(\hat A/p^{\epsilon}\hat A, \bar d_{\hat A})$  given by   $\rho_A(\hat  x)\smile_1\rho_A(\hat  y)= \rho_A(\hat x \smile_1\hat y)$ for $\hat x, \hat y\in \hat A$.
  
   The latter induces a cup one product $$\smile_1:((\hat A/p^{\epsilon}\hat A\otimes _{\mathbb Z}\mathbb Q)\otimes_{\mathbb Q} ((\hat A/p^{\epsilon}\hat A)\otimes _{\mathbb Z}\mathbb Q)\to (\hat A/p^{\epsilon}\hat A)\otimes _{\mathbb Z}\mathbb Q.$$\end{enumerate}\end{Rem}
\begin{Lem}{\sl Let  $\epsilon\geq 1$ be an integer.\begin{enumerate}
\item If $\hat x$ is a cocycle in $(\hat A, d_{\hat A})$ such that $\mbox{cls}(\rho_A(\hat x))\in \mbox{tor}H^*(\hat A/p^{\epsilon}\hat A, \bar d_{\hat A})\smallsetminus\{0\}$, then $\mbox{cls}(\hat x)\in \mbox{tor}H^*(\hat A,d_{\hat A})$ where $\rho_A:\hat A\to \hat A/p^{\epsilon}\hat A$ is the projection.
\item If $(\bar a_n)_{1\leq n\leq N}$ is a \emph{Kraines sequence} in $\hat A/p^{\epsilon}\hat A$, then $\mbox{cls}(\sum_{i=1}^N\bar a_i\bar a_{N+1-i})\in \mbox{tor}H^*(\hat A/p^{\epsilon}\hat A, \bar d_{\hat A})$.
\end{enumerate}

}\end{Lem}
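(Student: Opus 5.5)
The plan is to run both parts through the short exact sequence of cochain complexes
$$0\lra (\hat A, d_{\hat A})\stackrel{p^{\epsilon}}{\lra} (\hat A, d_{\hat A})\stackrel{\rho_A}{\lra} (\hat A/p^{\epsilon}\hat A, \bar d_{\hat A})\lra 0$$
and its long exact sequence. For part 2 I will also use the cup one product of Lemma 2.3.3, transported to $\hat A/p^{\epsilon}\hat A$ by Remark 2.3.8. As in Remark 2.3.8, I read $\mbox{tor}H^*(\hat A/p^{\epsilon}\hat A,\bar d_{\hat A})$ as the classes that die after $\otimes_{\mathbb Z}\mathbb Q$ in the sense of that remark. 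I read it moreover as the summand $s\mbox{Tor}(H^{*+1}(\hat A), \mathbb Z/p^{\epsilon})$ of the universal coefficient decomposition (8) with $\mathbb Z/p^{\epsilon}$ replacing $\mathbb F_p$. These are precisely the classes detected by the connecting map $\partial : H^q(\hat A/p^{\epsilon}\hat A)\to H^{q+1}(\hat A)$, whose image is $p^{\epsilon}$-torsion.

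\textbf{Part 1.} Let $\hat x$ be a cocycle with $\xi=\mbox{cls}(\rho_A(\hat x))$ nonzero and lying in $\mbox{tor}H^*(\hat A/p^{\epsilon}\hat A)$.

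1. Because $\rho_A(\hat x)$ lifts to a cocycle, exactness gives $\partial\xi=0$. Hence $\xi\in \mbox{Im}\, H^*(\rho_A)= H^*(\hat A)\otimes \mathbb Z/p^{\epsilon}$.

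2. A nonzero class that also lies in the torsion summand must come from the $p$-torsion part of $H^*(\hat A)$ under $\otimes\mathbb Z/p^{\epsilon}$. I will argue this in the same way as Lemma 2.3.7: if $\mbox{cls}(\hat x)$ had a free component, its reduction would be a non-torsion element of the $\otimes$-summand, contradicting $\xi\in\mbox{tor}$.

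3. It follows that $\mbox{cls}(\hat x)$ differs from a torsion class by an element of $p^{\epsilon}H^*(\hat A)=\ker H^*(\rho_A)$. Since $X$ is a finite complex, $H^*(\hat A)$ is finitely generated. I then expect to absorb the free part of this $p^{\epsilon}H^*(\hat A)$ component by the same argument, applied with a larger exponent $\epsilon'\geq\epsilon$, to conclude that $\mbox{cls}(\hat x)\in\mbox{tor}H^*(\hat A,d_{\hat A})$.

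\textbf{Part 2.} Let $(\bar a_n)$ be a Kraines sequence and put $c=\sum_{i=1}^N\bar a_i\bar a_{N+1-i}$.

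1. First I check that $c$ is a cocycle, by the standard telescoping computation using (6). Since $\deg \bar a_k$ is odd for all $k$, every product $\bar a_j\bar a_{k-j}$ has even degree.

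2. Next I pair the terms $i$ and $N+1-i$ and apply (6) to $\bar a_i\smile_1\bar a_{N+1-i}$. As both factors have odd degree, the sign in (6) gives
$$\bar d(\bar a_i\smile_1 \bar a_{N+1-i}) = \bar a_i\bar a_{N+1-i}+\bar a_{N+1-i}\bar a_i - (\bar d\bar a_i)\smile_1\bar a_{N+1-i} + \bar a_i\smile_1\bar d\bar a_{N+1-i}.$$

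3. I substitute the Kraines relations $\bar d\bar a_k=\sum_j\bar a_j\bar a_{k-j}$ into the correction terms and expand with the Hirsch identity (7). The aim is to show that, summed over all $i$, these corrections cancel or are themselves coboundaries.

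4. This leaves $2c = \bar d(\,\cdot\,)$ when $N$ is even. When $N$ is odd, the middle term is $\bar a_m^2$ with $m=(N+1)/2$, and $2\bar a_m^2=\bar d(\bar a_m\smile_1\bar a_m)$.

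5. Therefore $2\,\mbox{cls}(c)=0$, so $\mbox{cls}(c)$ vanishes after $\otimes\mathbb Q$, i.e. $\mbox{cls}(c)\in \mbox{tor}H^*(\hat A/p^{\epsilon}\hat A)$.

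\textbf{Main obstacle.} The hardest step is step 3 of Part 2, the cancellation of the correction terms. These are triple products, and the signs from (7) must be tracked carefully. If they do not cancel outright, I would try adding to the cochain $\sum\bar a_i\smile_1\bar a_{N+1-i}$ further terms of the form $(\bar a_j\bar a_k)\smile_1\bar a_l$ to kill them. In Part 1, the delicate point is the reading of "$\mbox{tor}$" in $H^*(\hat A/p^{\epsilon}\hat A)$ needed to exclude the free summand in step 2.
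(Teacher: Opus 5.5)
Under the paper's meaning of $\mbox{tor}$ (the $p$-torsion subgroup, Notation 2.3.4), Part 1 is false, and your step 2 is exactly where the argument breaks. Every class in $H^*(\hat A/p^{\epsilon}\hat A,\bar d_{\hat A})=H^*(X;\mathbb Z/p^{\epsilon})$ is killed by $p^{\epsilon}$, so the hypothesis only says $\mbox{cls}(\rho_A(\hat x))\neq 0$. Take $X=S^2$ and $\hat x$ a cocycle representing a generator of $H^2(S^2;\mathbb Z)\cong\mathbb Z$: the hypothesis holds, but $\mbox{cls}(\hat x)$ is not torsion.

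Your claim that a free component of $\mbox{cls}(\hat x)$ would reduce to ``a non-torsion element of the $\otimes$-summand'' fails, because $H^*(\hat A)\otimes\mathbb Z/p^{\epsilon}$ has no non-torsion elements. The ``larger exponent'' absorption in step 3 then has nothing to act on, since the obstruction is a genuine free class. Your alternative readings do not rescue this, and they are not equivalent to each other:
\begin{itemize}
\item ``dies after $\otimes\mathbb Q$'' is all of $H^*(\hat A/p^{\epsilon}\hat A)$, because $(\hat A/p^{\epsilon}\hat A)\otimes_{\mathbb Z}\mathbb Q=0$;
\item under the Tor-summand (or ``$\partial\xi\neq 0$'') reading, your own step 1 puts $\xi$ in $\mbox{Im}\,H^*(\rho_A)$, which meets that summand only in $0$. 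Part 1 then becomes vacuous, and steps 2--3 are beside the point.
\end{itemize}
What the first sentence of your step 3 does give, when $\xi$ is the reduction of an integral torsion class, is $\mbox{cls}(\hat x)\in \mbox{tor}H^*(\hat A,d_{\hat A})+p^{\epsilon}H^*(\hat A,d_{\hat A})$. That is the most one can hope for. The paper's own argument has the same hole. In its case (a), the equation $d_{\hat A}\hat y=p^{\eta}(\hat x+p^{\epsilon-\eta}\hat z)$ only shows that $\mbox{cls}(\hat x+p^{\epsilon-\eta}\hat z)$ is torsion. The $S^2$ example falls in case (b) ($\eta\geq\epsilon$), where the asserted Bockstein preimage $\bar y$ does not exist, since $H^1(S^2;\mathbb Z/p^{\epsilon})=0$.

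Part 2, under the literal reading, is immediate: $p^{\epsilon}$ annihilates $H^*(\hat A/p^{\epsilon}\hat A,\bar d_{\hat A})$. The paper's appeal to Kraines' Theorem 15 over the zero algebra $(\hat A/p^{\epsilon}\hat A)\otimes_{\mathbb Z}\mathbb Q$ yields nothing more than this. Your cup-one route is not just unnecessary but unsound. For $p$ odd, $2$ is a unit in $\mathbb Z/p^{\epsilon}$, so $2\,\mbox{cls}(c)=0$ would force $\mbox{cls}(c)=0$ for every odd-degree Kraines sequence; that is, all restricted Massey powers would vanish. For $\epsilon=1$ and $N=p-1$ this contradicts Kraines' formula $\langle\alpha\rangle^p=-\beta P^m\alpha$, which is nonzero for instance for the fundamental class of a large finite skeleton of $K(\mathbb Z/p,2m+1)$.

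The failure sits in your step 3. The Hirsch identity (7) only expands a product in the \emph{left} slot of $\smile_1$. The correction term $\bar a_i\smile_1\bar d_{\hat A}\bar a_{N+1-i}=\bar a_i\smile_1\sum_j\bar a_j\bar a_{N+1-i-j}$ has the product in the right slot, where cochain-level $\smile_1$ satisfies no such identity. The same problem spoils step 4: for $m\geq 2$, $\bar d_{\hat A}(\bar a_m\smile_1\bar a_m)$ equals $2\bar a_m^2$ only up to correction terms of the same kind.
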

\begin{proof}\begin{enumerate}
\item Suppose $\mbox{cls}(\rho_A(\hat x))\in \mbox{tor}H^*(\hat A/p^{\epsilon}\hat A, \bar d_{\hat A})\smallsetminus\{0\}$, then there exists an integer $\eta\geq 1$  such that $p^{\eta}\mbox{cls}(\rho_A( \hat x))=0$. 
\begin{enumerate}
\item If $1\leq\eta<\epsilon$, there exists $\bar y\in \hat A/p^{\epsilon}\hat A$ such that $\bar d_{\hat A}\bar y=p^{\eta}\rho_A(\hat  x)$. Let $\hat y\in \hat A$ such that $\rho_A(\hat y)=\bar y$, there exists $\hat \zeta\in p^{\epsilon}\hat A$ such that $d_{\hat A}\hat y=\hat x+\hat \zeta$.

Set $\hat \zeta=p^{\epsilon }\hat z$, $\hat z\in \hat A$, then  $d_{\hat A}\hat y=p^{\eta}(\hat x+p^{\epsilon-\eta}\hat z)$ and hence $\mbox{cls}(\mbox{red}_p(\hat y))=s\mbox{cls}(\mbox{red}_p(\hat x))$. Thus $\mbox{cls}(\hat x)\in \mbox{tor}H^*(\hat A,d_{\hat A})$.
\item Suppose that   $\eta\geq \epsilon$. The equality $p^{\eta}\mbox{cls}(\rho_A(\hat y))=0$ means that there exists a cocycle $\bar y\in \hat A/p^{\epsilon}\hat A$ such that $$\beta(\mbox{cls}(\bar y))=p^{\eta}\mbox{cls}(\hat x)\in H^*(p^{\epsilon}\hat A, d_{\hat A})\qquad (\star)$$ where $\beta:H^*(\hat A/p^{\epsilon}\hat A, \bar d_{\hat A})\to H^{*+1}(p^{\epsilon}\hat A, d_{\hat A})$ is the Bockstein homomorphism associated to the short exact sequence 

$$0\to p^{\epsilon}\hat A\to \hat A\stackrel{\rho_A}\rightarrow \hat A/p^{\epsilon}\hat A\to 0.$$ 
 Consider
$\hat y'\in \hat A$ such that $\rho_A(\hat  y')=\bar y$. Then $d_{\hat A}\hat y'= p^{\eta'}\hat  z$, $\eta'\geq \epsilon$  and $\beta(\mbox{cls}(\bar y))=p^{\eta'}\mbox{cls}(\hat z).$

From ($\star$), we deduce that $p^{\eta'}\hat z=p^{\eta}\hat x+d_{\hat A}\hat t$ where $\hat t\in p^{\epsilon}\hat A$.  If we set $\hat y=\hat y'-\hat t$, then  $d_{\hat A}\hat y= p^{\eta}\hat x$ and  $\mbox{cls}(\hat x)\in \mbox{tor}H^*(\hat A, d_{\hat A})$.
\end{enumerate}

\item 
    Since the DG algebra $ (\hat A/p^{\epsilon}\hat A)\otimes _{\mathbb Z}\mathbb Q$ with coefficients in $\mathbb Q$ is equipped with a cup one product, we apply Theorem 15 of \cite{Kr2}. Thus $\mbox{cls}(1\otimes_{\mathbb Z}\mathbb Q(\sum_{i=1}^Na_ia_{N+1-i}))=0$ and $\mbox{cls}(\sum_{i=1}^Na_ia_{N+1-i})\in \mbox{tor}H^*(\hat A/p^{\epsilon}\hat A, \bar d_{\hat A}).$\end{enumerate}
\end{proof}

\subsection{On minimal models}
Let $ (A, d_A)$ be the $\mbox{mod}_p$ reduction of the DG algebra $(\hat A, d_{\hat A})$ with coefficients in $\mathbb Z$  such that $$H^0(\hat A, d_{\hat A})= \mathbb Z\,, \quad H^1(\hat A, d_{\hat A})=0 \,, \quad \mbox{rank} H^i(\hat A, d_{\hat A})<\infty\,\quad \mbox{for all} \quad i.$$

By a result of \cite{HK}, there exists a  quasi-isomorphism of DG-algebras 

$$\varphi_{\hat A } :(T\hat V, d_{\hat V})\lra (\hat A, d_{\hat A})
$$
in which $T\hat V$ denotes the tensor algebra on the graded $\mathbb Z$-module $\hat V$. Moreover 
$$
\left\{
\begin{array}{ll}
\hat V= \{\hat V^i\}_{i\geq 2} \\
\mbox{rank} \hat V^i <\infty \mbox{ for } i\geq 2\\
d_{\hat V} (\hat V)\subset T^{\geq 2}\hat V\oplus p\hat V 
 \end{array}
  \right.
$$
where $T^k(\hat V)$ denote the $k^{\scriptsize \rm th}$ tensor power $\hat V^{\otimes k}$. Such a quasi-isomorphism  is unique up to an isomorphism of DG-algebras. It is called  the minimal model of the DG-algebra $(\hat A, d_{\hat A})$.

According to \cite{BL},\cite{HL}, \cite{FHT2}(page 279),   there exists a  quasi-isomorphism of DG-algebras 

$$\varphi_A :(TV, d_V)\lra ( A, d_ A)
$$
in which $TV$ denotes the tensor algebra on the graded $\mathbb F_p$-vector space $V$. Moreover we may suppose that
$$
\left\{
\begin{array}{ll}
V= \{V^i\}_{i\geq 2} \\
\dim V^i <\infty \mbox{ each } i\geq 2\\
d_V(V)\subset T^{\geq 2} V
 \end{array}
  \right.
$$
where $T^k(V)$ denote the $k^{\scriptsize \rm th}$ tensor power $V^{\otimes k}$. Such a quasi-isomorphism  is unique up to an isomorphism of DG algebras and  is called  the minimal model of the DG algebra $(A,d_A)$. 

It is well-known by \cite{ HK}  that,  if 
$$\varphi_{\hat  A} :(T\hat V, d_{\hat V})\lra (\hat A, d_{\hat A}) \quad \mbox{and}\quad \varphi_A :(TV, d_V)\lra \mathcal A$$ are minimal models, then 
 \begin{eqnarray} TV\cong T\hat V\otimes_{\mathbb Z}\mathbb F_p\quad \mbox{and} \quad  V\cong \hat V\otimes_{\mathbb Z}\mathbb F_p.\end{eqnarray}

\vspace{3mm}
Suppose that $(A, d_A)$ is the mod$_p$ reduction of the DG algebra $ (\hat A, d_{\hat A})$ with coefficients in $\mathbb Z$.

Let $\varphi_{\hat A} :(T\hat V, d_{\hat V})\lra (\hat A, d_{\hat A})$ be a minimal model. Set $V= \hat V\otimes_{\mathbb Z}\mathbb F_p$, $d_V=d_{\hat V}\otimes_{\mathbb Z}\mathbb F_p$, then  \begin{eqnarray}\varphi_A= \varphi_{\hat A}\otimes_{\mathbb Z}\mathbb F_p: (T\hat V, d_{\hat V})\otimes_{\mathbb Z}\mathbb F_p=(TV, d_V)\lra (\hat A, d_{\hat A})\otimes_{\mathbb Z}\mathbb F_p=( A, d_A) \end{eqnarray} is a quasi-isomorphism
 (\cite{ HK} (see also \cite{Lam}, proposition 15), that is $\varphi_A$ is a minimal model.  In particular, the diagram
\begin{eqnarray}\begin{array}{lll}
 (T\hat V, d_{\hat V})&\stackrel{\varphi_{\hat A}}\rightarrow&(\hat A, d_{\hat A})\\
 \mbox{red}_p\downarrow&&\downarrow \mbox{red}_p\\
 (TV, d_V)&\stackrel{\varphi_A}\longrightarrow&(A, d_A)
\end{array} \end{eqnarray}is commutative. 
\vspace{5mm}

The next result provides a  theoretical  construction of the minimal model of a DG algebra $(A, d_A)$   from  the DG algebra $\Omega \bB \mathcal A$ considered in section 2.

\begin{Pro}
Let $(A, d_A)$ be a supplemented DG algebra  as in (2), $A=\mathbb F_p\oplus \bar A$,
 and  let  $\alpha_A: \Omega \bB (A, d_A) := (T U, D) \lra (A, d_A)$ be  the  quasi-isomorphism  of DG algebras defined in paragraph 2.1.  Here    $U = s^{-1} T^{+}s\bar  A $ and   $D=\delta_0+\delta_1$ with $\delta_0 U\subset U$ and   $\delta_1 U\subset T^{\geq 2}U$. 

There exists a commutative diagram in the category of DG algebras,

\begin{eqnarray}
 \xymatrix{
(TU, D)=\Omega\bB (A, d_A)  \ar[d]_\simeq ^{\alpha_A} \ar@{->>}[rr]_\simeq ^{p_V} && (TV, d_V)\ar@{=}[d]\\
(A, d_A)&& \ar[ll]^{\varphi_A  } (TV, d_V)\ar[llu]^{\varphi_V}
 }  \,,
       \end{eqnarray}
 where $p_V$ is a  canonical surjective quasi-isomorphism and $\varphi_V$  a homomorphism of DG  algebras which  is a section of $p_V$ and hence a quasi-isomorphism. 
 
 Furthermore,   $V= \{V^i\}_{i \geq 2}$ and it is isomorphic to $H(U, \delta_0) \cong
s^{-1} H^+\bB (A, d_A)$,
\item $\varphi_A:(TV, d_V)\to (A, d_A) $ is a minimal model.
  \end{Pro}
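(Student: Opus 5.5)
The plan is to obtain $(TV,d_V)$ from $(TU,D)$ by splitting off a contractible free factor, in the spirit of \cite{BL}, \cite{HL}. Then $\varphi_A$ is defined as the composite $\alpha_A\circ\varphi_V$, and minimality is read off from the linear part of the differential.

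\emph{Step 1: the linear part.} By construction $U=s^{-1}\bar{\mathbb B}^{+}(A,d_A)$ (the desuspension of $T^{+}s\bar A$). The linear part $\delta_0$ of $D$ is the desuspension of the bar differential $\delta=\delta'+\delta''$. The quadratic part $\delta_1$ comes from the reduced coproduct (4) and lands in $T^{\geq 2}U$. Hence
$$H(U,\delta_0)\cong s^{-1}H^{+}\bar{\mathbb B}(A,d_A).$$
The hypotheses (2), namely $H^0(A)=\mathbb F_p$ and $H^1(A)=0$, give $H^{\leq 0}$ of the augmentation ideal of $\bar{\mathbb B}(A,d_A)$ equal to $0$. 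Indeed, a class of bar degree $0$ would have to come from $H^1(A)$ or from $[a_1|\cdots|a_k]$ with all $a_i\in A^1$, and the latter are killed in cohomology using the vanishing of $H^1$ through the $\delta''$-filtration. Therefore $s^{-1}H^{+}\bar{\mathbb B}$ is concentrated in degrees $\geq 2$, which gives $V=\{V^i\}_{i\geq 2}$ once $V$ is identified with it.

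\emph{Step 2: decomposition and change of generators.} Since $\mathbb F_p$ is a field, choose a splitting of graded vector spaces
$$U=V\oplus W\oplus \delta_0W,$$
with $V\subset\ker\delta_0$ mapping isomorphically onto $H(U,\delta_0)$ and $\delta_0\colon W\to\delta_0W$ an isomorphism. Then replace the generators $\delta_0w$ by $Dw$; this is still a system of free generators because $Dw-\delta_0 w\in T^{\geq 2}U$. Next, inductively on degree (and, within a degree, on word length), modify each $v\in V$ to $v'=v+\xi_v$ with $\xi_v\in T^{\geq2}U$, chosen so that $Dv'\in T(V')$. The tool is the standard lemma: whenever $Dv$ has components involving $W$ or $DW$, they can be absorbed, because the ideal generated by $W\oplus DW$ is $D$-acyclic with an explicit contracting homotopy $h$ sending $Dw\mapsto w$.

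This yields an isomorphism of DG algebras
$$(TU,D)\cong (TV',d)\amalg\bigl(T(W\oplus DW),D\bigr),$$
and the second factor is contractible.

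\emph{Step 3: the maps.} Define $p_V$ as the projection killing $W\oplus DW$. It is surjective, and it is a quasi-isomorphism because a free product with a contractible free DG algebra does not change homology (filter by word length and compare $E_1$-terms, which are $T(H(U,\delta_0))$ on both sides). Let $\varphi_V$ be the inclusion of the first factor, so $p_V\varphi_V=\mathrm{id}$, and $\varphi_V$ is a quasi-isomorphism by two-out-of-three. Set $\varphi_A=\alpha_A\circ\varphi_V$; this is a quasi-isomorphism because $\alpha_A$ is one, and the diagram (11) commutes by definition. Minimality follows because the linear part of $d_V$ is induced by $\delta_0|_V=0$, so $d_V(V)\subset T^{\geq2}V$.

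\emph{Main obstacle.} I expect the main difficulty to be making the inductive change of generators in Step 2 well defined. The module $U$ itself has elements in degree $1$ (coming from $A^1$), so induction purely on degree need not terminate in low degrees. I would handle this with a double induction on degree and word length, using that $D$ raises degree by one and that $\delta_1$ strictly increases word length. This should make each correction $\xi_v$ a finite sum in each degree, as in the proof of \cite[Prop.~15]{Lam} and \cite[page 279]{FHT2}.
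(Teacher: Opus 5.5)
You start from the same splitting as the paper (your $W$ is its $S$), but you obtain the section $\varphi_V$ by a different route. The paper never modifies the generators in $V$. It defines $p_V$ as the quotient of $(TU,D)$ by the ideal $I$ generated by $S$ and $DS$, and asserts that $I$ is acyclic and that $TU/I\cong TV$. It then gets $\varphi_V$ abstractly from the strict lifting lemma [ACC, Prop.~4.4]. That lemma needs only that $p_V$ is a surjective quasi-isomorphism and that $(TV,d_V)$ is free on generators of degree $\geq 2$ with decomposable differential. You instead change generators inside $TU$ to get the stronger splitting $(TU,D)\cong (TV',d)\amalg T(W\oplus DW)$. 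Your $p_V$ is the same map as the paper's, since the same ideal is killed, and its section is simply the inclusion of a free factor. Your version is more explicit, and it also justifies $V=V^{\geq 2}$ and $d_V(V)\subset T^{\geq 2}V$, which the paper calls obvious. When $U$ is concentrated in degrees $\geq 2$ (i.e.\ $A^0=\mathbb F_p$, $A^1=0$), a plain induction on degree makes every step of your argument work. The paper's version is more economical and more robust, because the induction hidden in the lifting lemma runs over the degrees of $V$, all $\geq 2$, and not over $U$.

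That difference is exactly your acknowledged obstacle, and the proposed double induction on degree and word length is not obviously well founded. If $U^1\neq 0$, then for $v\in V^n$ the element $\delta_1 v$ may contain words $x\,v''$ with $x\in U^1$ and $v''\in V^n$. The recursion can then cycle within one degree, and word length does not rule this out. The fix is to run the induction in $TV$ rather than in $TU$. Since $d_Vv\in T(V^{<n})$, the element $Dv-\varphi_V(d_Vv)$ is a cycle in $I$, so by acyclicity of $I$ there is $\eta\in I$ with $D\eta=Dv-\varphi_V(d_Vv)$. Put $v'=v-\eta$; then $Dv'=\varphi_V(d_Vv)\in T((V')^{<n})$. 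This is precisely the lifting lemma.

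Your remaining word-length arguments need each degree of $TU$ to contain only words of bounded length, i.e.\ $A^0=\mathbb F_p$. These are: that $\delta_0w\mapsto Dw$ and $v\mapsto v'$ are changes of free generators, and that your spectral sequence in Step 3 converges. The paper's assertions about $I$ tacitly need the same hypothesis. For normalized singular cochains, $\bar A^0\neq 0$ puts generators of $U$ in degrees $\leq 0$, and neither argument covers that case as written.

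Finally, in Step 1 your list of degree-$0$ bar elements omits those involving $\bar A^0$. The clean argument uses the bar-length filtration: it is exhaustive and bounded below, it has $E_1=\bB(H^*(A))$, and $\overline{H^*(A)}$ is concentrated in degrees $\geq 2$.
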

 \begin{proof} Set $U= \mbox{ker}   \delta_0 \oplus S$ and $\ker \delta_0= V\oplus \delta_0 S$, then $U= V\oplus S\oplus \delta_0 S$.

 Let $I$ denote the ideal generated by $S$ and  $DS$ in $TU$.    The ideal $I$ is
acyclic and hence the projection 
$$p_{V}: (TU, \delta ) \rightarrow (TU/I, {\bar \delta })$$ is a quasi-isomorphism. From the decomposition $U = V \oplus S \oplus \delta_0 S$, we deduce an
isomorphism of DG  algebras
$ TU/I \cong TV $ which carries  a differential $d_V$ on $TV $ and  defines a
surjective quasi-isomorphism of DG algebras:
$$
\xymatrix{
(TU, D ) \ar@{->>}[rr]_{\simeq} ^{p_V} &&(T^aV, d_V) }\,,
$$
which extends  $\id_V$. The  strict lifting lemma \cite[Proposition 4.4]{ACC} yields  a  section $\varphi_V$ of $p_V$ which is a homomophism of DG algebras.  The composite  $\varphi_A=\alpha_A \circ  \varphi_V$  is a quasi-isomorphism and hence a minimal model.

It is obvious, by the construction that $V$  is isomorphic to $H(U, \delta_0) \cong
s^{-1} H^+\bB (A, d_A)$.
 \end{proof}
 
 \begin{Rem} The DG algebra $(TV, d_V)$ depends on the choice of a direct factor $V$ of $U$. It is easy to see that   another choice  $V'$ provides a minimal model 
 
 $\varphi'_A: (TV', d_{V'})\lra (A, d_A)$ and  the strict lifting lemma yields an isomorphism 
 
 $\varphi: (TV', d_{V'})\lra (TV, d_{V})$ such that $\varphi'_A\circ \varphi\simeq_{\bf DA}\varphi_A$. 
 \end{Rem}

 \begin{Cor}{\sl Let $(A,d_A)$ be a supplemented DG algebra  as in paragraph 2.1 and let $\alpha\in H^*(A, d_A)$ such that 
 
 $\sigma(\alpha)\neq 0$, 
 then there exists a minimal model
 
  $\varphi_A:(TV, d_V)\to (A, d_A)$ such that
 $V\cong \mathbb F_pv\oplus  W$,
 $d_Vv=0$  and 
 $H^*\varphi_A(v)=a$ where $a$ represents $\alpha$.} 
 \end{Cor}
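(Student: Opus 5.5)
We build the minimal model through the construction in the preceding Proposition, arranging the choice of the direct factor $V$ of $U$ so that it contains a suitable generator. Take the cocycle $a\in A^{n+1}$ representing $\alpha$. Then $[a]\in s\bar A\subset T^+ s\bar A$ is a cocycle of $\bB(A,d_A)$, since $\delta'[a]=\pm[d_Aa]=0$ and $\delta''$ vanishes on $\bB_1A$. Its class is $\sigma(\alpha)\neq 0$ by hypothesis. Set $v:=s^{-1}[a]\in U=s^{-1}T^+s\bar A$.

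First we check that $\delta_0 v=0$ and that $v$ is not $\delta_0$-exact. Recall that $\delta_0$ on $U$ is the desuspension of the bar differential $\delta$ of $\bB(A,d_A)$. Hence $\delta_0 v=-s^{-1}\delta[a]=0$. If $v=\delta_0 u$, then desuspending would give $[a]=\pm\delta(su)$, contradicting $\mathrm{cls}([a])=\sigma(\alpha)\neq0$. So $\mathrm{cls}(v)\neq 0$ in $H(U,\delta_0)\cong s^{-1}H^+\bB(A,d_A)$.

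Next we choose the decomposition. In the proof of the Proposition we have freedom in the splittings $U=\ker\delta_0\oplus S$ and $\ker\delta_0=V\oplus\delta_0S$. Since $v\in\ker\delta_0$ and $v\notin\delta_0 S=\mathrm{im}\,\delta_0$, we may choose a complement $W$ of $\mathbb F_p v\oplus\delta_0 S$ inside $\ker\delta_0$ (work degreewise over the field $\mathbb F_p$) and put $V=\mathbb F_p v\oplus W$. The Proposition, together with the Remark on the choice of $V$, then gives a minimal model $\varphi_A=\alpha_A\circ\varphi_V:(TV,d_V)\to(A,d_A)$.

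It remains to identify $d_Vv$ and $\varphi_A(v)$. The element $v\in U$ is a generator of $TU$, and $Dv=\delta_0v+\delta_1v$. Here $\delta_1$ comes from the reduced coproduct of $\bB A$, and $\bar\Delta[a]=0$ for a length-one tensor, so $\delta_1 v=0$ and $Dv=0$. Hence $d_Vv=p_V(Dv)=0$, since $p_V$ restricts to $\mathrm{id}_V$ and is a chain map.

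For the image of $v$, the main point is that $\varphi_V$ is produced by the strict lifting lemma and need not send $v$ to itself, only to something with $p_V\varphi_V(v)=v$. We handle this at the level of cohomology. Because $p_V$ is a quasi-isomorphism with section $\varphi_V$, and $p_V(v)=v$, we get $\mathrm{cls}(\varphi_V(v))=\mathrm{cls}(v)$ in $H(TU,D)$. Therefore
$$H^*\varphi_A(\mathrm{cls}\,v)=H^*\alpha_A(\mathrm{cls}\,v)=\mathrm{cls}(\alpha_A\langle[a]\rangle)=\mathrm{cls}(a)=\alpha,$$
using the formula for $\alpha_A$ on length-one elements. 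This is the assertion $H^*\varphi_A(v)=a$ (up to the class it represents).

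If a strict equality $\varphi_A(v)=a$ is wanted, we would alternatively construct the lifting $\varphi_V$ by hand on the generator $v$, setting $\varphi_V(v)=v$; this is allowed because $Dv=0$, and the strict lifting lemma can be applied to the remaining generators $W$.
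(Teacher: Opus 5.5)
Your proof is correct and follows the same route as the paper: realize the minimal model through Proposition 2.4.1 with $V\cong s^{-1}H^+\bB(A,d_A)$, take the generator $v$ corresponding to $s^{-1}\sigma(\alpha)$, and compute its image via $\alpha_A(\langle[a]\rangle)=a$. You are more careful than the paper at two points. First, by choosing the complement $V$ to contain $\langle[a]\rangle$ itself you get $d_Vv=p_V(D\langle[a]\rangle)=0$ outright, whereas the paper only appeals to primitivity of $\sigma(\alpha)$. Second, you correctly observe that diagram (12) only gives $\varphi_A=\alpha_A\circ\varphi_V$, so the identity $\alpha_A=\varphi_A\circ p_V$ that the paper uses holds only in cohomology; this is exactly how you conclude $H^*\varphi_A(\mathrm{cls}\,v)=\alpha$, with strict equality once you choose $\varphi_V(v)=v$.
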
   
\begin{proof} Let $\alpha\in H^*(A, d_A)\smallsetminus\{0\}$ such that $\sigma(\alpha)\neq 0$. 
  By Proposition 2.4.1, if $\varphi_A: (TV, d_V)\to \mathcal A$ is a minimal model, then $V\cong s^{-1}H^*\bB (A, d_A)$.

Since $\sigma(\alpha)\neq 0$, $H^*\bB (A, d_A) =\mathbb F_p\sigma(\alpha)\oplus W'$, so that $s^{-1}H^*\bB (A, d_A)= F_ps^{-1}\sigma(\alpha)\oplus s^{-1}W'\cong V$. Set $v= s^{-1}\sigma(\alpha)$ and $W=s^{-1}W'$. Now $\sigma (\alpha) = \mbox{cls}([a])$ where $a$ is a representative of $\alpha$.This implies that $\sigma (\alpha)$ is primitive and hence $d_Vv=0$.

By the  commutativity of diagram (12), $\alpha_A(\langle[a]\rangle) =\varphi_{\mathcal A}\circ p_V(\langle[a]\rangle)= \varphi_{\mathcal A}(v) =a$. 
\end{proof}

\vspace{2mm}
\begin{Lem}{\sl Let $\varphi_A: (TV, d_V)\to (A, d_A)$ be a minimal model. If $\alpha\in H^*(A, d_A)$ is such that $\alpha=H^*\varphi_A(\mbox{cls}(v))$ where $v$ is a cocycle in   $ V\smallsetminus\{0\}$, then $\alpha$ is a generator of $H^*(A, d_A)$ and  $\sigma(\alpha)\neq 0$.
}\end{Lem}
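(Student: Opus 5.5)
The plan is to reduce everything to the minimal model $(TV,d_V)$ and to use that the cohomology of $\bB$ is a quasi-isomorphism invariant. Since $\varphi_A$ is a quasi-isomorphism of DG algebras, $\bB\varphi_A:\bB(TV,d_V)\to\bB(A,d_A)$ is a quasi-isomorphism of DG coalgebras; this holds for connected, simply connected algebras, via the filtration by bar length and a comparison of spectral sequences. The suspension $\sigma$ is natural with respect to this map: for a cocycle $v\in V$ we have $\bB\varphi_A([v])=[\varphi_A(v)]$. Hence $\sigma(\alpha)=\mbox{cls}([\varphi_A(v)])=H^*\bB\varphi_A(\mbox{cls}([v]))$, and it suffices to prove two things: $\mbox{cls}(v)$ is indecomposable and nonzero in $H^*(TV,d_V)$, and $\mbox{cls}([v])\neq 0$ in $H^*\bB(TV,d_V)$.

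\textbf{Nonvanishing of $\mbox{cls}([v])$.} I will use the quasi-isomorphism $\bB(TV,d_V)\simeq (\mathbb F_p\oplus sV, \bar d)$, that is, the bar construction of a free algebra retracts onto bar-length $\leq 1$ on generators. More concretely, I will use minimality, $d_V(V)\subset T^{\geq 2}V$. Suppose $[v]=\delta\xi$ for some $\xi$. Write $\xi=\sum_k\xi_k$ with $\xi_k\in\bB_k(TV)$, and consider the component of $\delta\xi$ in $\bB_1$. This component equals $\delta'\xi_1+\delta''\xi_2$, and it lies in $s(T^{\geq 2}V)$: minimality gives $\delta'\xi_1\in s(d_V(TV))\subset sT^{\geq2}V$, and products of elements of $\overline{TV}$ lie in $T^{\geq 2}V$. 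Projecting $T^+V\to V$ (word length one) then gives $v=0$, a contradiction. So $\mbox{cls}([v])\neq 0$, and therefore $\sigma(\alpha)\neq 0$.

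\textbf{$\alpha$ is a generator.} The same word-length argument shows $\mbox{cls}(v)$ is not decomposable. If $v=\sum x_iy_i+d_Vw$ with $x_i,y_i$ cocycles of positive degree, then the right-hand side lies in $T^{\geq2}V$. This uses minimality again and the fact that $TV$ is concentrated in degrees $\geq 2$ in positive part, so $x_i,y_i\in T^{+}V$. Thus $v\in V\cap T^{\geq 2}V=0$, which is impossible. Transporting along the algebra isomorphism $H^*\varphi_A$ shows $\alpha\neq 0$ and $\alpha\notin H^+\cdot H^+$, i.e.\ $\alpha$ is an indecomposable generator. This also confirms that $\sigma(\alpha)$ makes sense on $QH^*(A,d_A)$.

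The main obstacle is the invariance step, namely that $\bB\varphi_A$ is a quasi-isomorphism over $\mathbb F_p$. The paper only states that $\alpha_A$ is a quasi-isomorphism. I would prove invariance by the bar-length filtration: the $E_1$ terms are $(sH^+)^{\otimes k}$ by the Künneth theorem over a field, and convergence follows because the simple connectivity hypotheses (1) make the filtration finite in each degree. Alternatively, I could bypass it entirely by using Proposition 2.4.1: there $V\cong s^{-1}H^+\bB(A,d_A)$ directly, so $v$ corresponds to a nonzero class of $H^*\bB(A,d_A)$, and I would check via diagram (12) that this class is $\mbox{cls}([a])$.
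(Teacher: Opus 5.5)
Your proposal is correct and is essentially the paper's argument: you reduce, via the quasi-isomorphism $\bB\varphi_A$ and the naturality of $\sigma$, to showing that $\mbox{cls}([v])\neq 0$ in $H^*\bB(TV,d_V)$, and the paper obtains this from the quasi-isomorphism $h:\bB(TV,d_V)\to\mathbb F_p\oplus sV$, which projects onto word length one. Your word-length computation is just a self-contained proof that $h$ kills coboundaries, which is all that is needed, and your indecomposability argument makes explicit the ``generator'' claim that the paper leaves implicit. One small correction: invariance of $\bB$ under quasi-isomorphisms over a field needs no connectivity, and hypothesis (1) concerns only cohomology, not the cochains. The bar-length filtration is exhaustive and bounded below, so induction on length with the five lemma, followed by a colimit, suffices.
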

\begin{proof}The bar construction functor preserving quasi-isomorphisms, $\bB \varphi_A: \bB (TV, d_V)\to \bB (A, d_A)$ is a quasi-isomorphism. Note that $\mathbb F_p \oplus sV$ is DG algebra with trivial product and trivial differential and the homomorphism 

$h:\bB (TV, d_V)\to \mathbb F_p\oplus sV$ defined by

$$h([v_1|..|v_n])=\left\{
\begin{array}{ll}
0&\mbox{if}\quad n>1\quad \mbox{or} \quad n=1\quad \mbox{and}\quad v_1\in T^{\geq 2}V\\
sv_1&\mbox{if}\quad n=1 \quad \mbox{and}\quad v_1\in V
 \end{array}
  \right.
$$ is a quasi-isomorphism (\cite{FHT2}, Proposition 19.1).

Let $a$ be a representative of $\alpha$, then $$\sigma(\alpha)= \mbox{cls}([a])=H^*\bB \varphi_{\mathcal A}(\mbox{cls}([v]))=H^*\bB \varphi_{\mathcal A}\circ (H^*h)^{-1}(sv)\neq 0$$ since $H^*\bB \varphi_A$ and $H^*h$ are isomorphisms.
\end{proof}

\section{ Proof of Theorem 1}
\subsection{First  step}

  \begin{Defs} Suppose that $(\theta_n)_{n\geq 1}$ is an infinite \emph{Kraines sequence} in $(A, d_A)$ starting at $\theta=\theta_1$ and 
consider $[\theta_1|..|\theta_k]\in \bB_kA$,  we define 

$$
\delta ^{-1}_i[\theta_1|..|\theta_k] = 
\sum_{j=1}^{i-1}[\theta_1|..| \theta_i| \theta_{i-j}|\theta_{i+1}|..|\theta_k]
$$and
$$\delta ^{-1}[\theta_1|..|\theta_k] = \sum_{i=1}^k \delta ^{-1}_i[\theta_1|..|\theta_k].$$
 Set \begin{eqnarray} \theta(0)=1\quad  \theta_{n,1}:= [\theta_n];\quad  \theta_{n,\ell+1}=  \delta ^{-1}\theta_{n,\ell}, \ell\geq 1,  \theta(n)= \sum_{\ell=1}^n \theta_{n,\ell}.\end{eqnarray}

\end{Defs}

\begin{Lem}(See also \cite{BT3}) \begin{enumerate}\item For every $n\geq 1$, $ \delta  \theta(n)=0$.
 \item   The reduced coproduct  on  $\bB\mathcal A$ satistisfies: 
  
 $$ \bar \Delta  \theta(n) =\sum_{n_1+n_2=n}  \theta(n_1)\otimes  \theta(n_2),  n_1, n_2\geq 1.$$ 	
\item  If    $\sigma(\mbox{cls}( \theta))\neq 0$, then  for $n\geq 1$,    $\mbox{cls}(\theta(n))\neq 0$. \end{enumerate}
 \end{Lem}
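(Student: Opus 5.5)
The plan is to first describe $\theta(n)$ explicitly and then check the three assertions directly on that description. I read the definition of $\delta^{-1}_i$ as \emph{splitting} the $i$-th entry: $\theta_{m}$ is replaced by $\theta_j|\theta_{m-j}$ for $1\le j\le m-1$. With this reading, $\theta_{n,\ell}$ should be the sum of all $[\theta_{n_1}|\cdots|\theta_{n_\ell}]$ over compositions $n_1+\cdots+n_\ell=n$ with $n_i\ge 1$. Hence
$$\theta(n)=\sum_{\ell\ge1}\ \sum_{n_1+\cdots+n_\ell=n}[\theta_{n_1}|\cdots|\theta_{n_\ell}].$$
The first thing I will settle is that each composition occurs with coefficient exactly $1$. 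Iterated splitting could produce a composition several times, which would matter mod $p$. If the literal recursion produces multiplicities, I will take the displayed formula as the definition of $\theta_{n,\ell}$, since that is what the proof really uses. I expect this bookkeeping, together with the signs below, to be the main obstacle.

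\textbf{Degrees and signs.} From $d_A\theta_k=\sum_j\theta_j\theta_{k-j}$ we get $\deg\theta_k+1=\deg\theta_j+\deg\theta_{k-j}$. By induction this gives $\deg\theta_k=k(\deg\theta+1)-1$, which is odd because $\deg\theta$ is odd. So every $s\theta_k$ has even degree, and all the signs $(-1)^{\epsilon_i}$ in the formulas for $\delta'$ and $\delta''$ equal $+1$.

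\textbf{Assertion 1.} Fix a composition $(n_1,\dots,n_\ell)$ of $n$. Applying $\delta'$ to it replaces some $\theta_{n_i}$ by $\sum_{j}\theta_j\theta_{n_i-j}$, and this term carries the global minus sign. Applying $\delta''$ to the finer composition $(n_1,\dots,n_{i-1},j,n_i-j,n_{i+1},\dots,n_\ell)$ merges the $i$-th and $(i+1)$-th entries into $\theta_j\theta_{n_i-j}$, with sign $+$. This gives a bijection between the terms of $\delta'\theta(n)$ and those of $\delta''\theta(n)$: each pair (composition of length $\ell$, position $i$, split $j$) matches exactly one pair (composition of length $\ell+1$, adjacent positions $i,i+1$). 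Matched terms carry opposite signs, so $\delta\theta(n)=0$. The terms of length $1$ cause no trouble: $\delta'[\theta_n]$ is cancelled by $\delta''$ of the length-two terms, and $\delta''$ of a length-one term is zero since it lands in $\bB_0$ and is absorbed by the reduced setting.

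\textbf{Assertion 2.} The coproduct (4) deconcatenates, sending $[\theta_{n_1}|\cdots|\theta_{n_\ell}]$ to $\sum_{k=1}^{\ell-1}[\theta_{n_1}|\cdots|\theta_{n_k}]\otimes[\theta_{n_{k+1}}|\cdots|\theta_{n_\ell}]$. A composition of $n$ cut after position $k$ is the same as a pair consisting of a composition of $n_1'=n_1+\cdots+n_k$ and a composition of $n_2'=n-n_1'$. Summing over all compositions therefore gives $\bar\Delta\theta(n)=\sum_{n_1'+n_2'=n}\theta(n_1')\otimes\theta(n_2')$, where $n_1',n_2'\ge1$.

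\textbf{Assertion 3.} Iterate Assertion 2. The $(n-1)$-fold reduced coproduct $\bar\Delta^{(n-1)}\theta(n)$ is a sum of $\theta(m_1)\otimes\cdots\otimes\theta(m_n)$ with all $m_i\ge1$ and $\sum m_i=n$. This forces every $m_i=1$, so $\bar\Delta^{(n-1)}\theta(n)=[\theta]^{\otimes n}$. The reduced coproduct on $H^*\bB(A,d_A)$ is induced by the chain-level one, and over the field $\mathbb F_p$ we have $H(\bB A^{\otimes n})\cong (H\bB A)^{\otimes n}$ by the K\"unneth theorem. Hence
$$\bar\Delta^{(n-1)}\mbox{cls}(\theta(n))=\sigma(\mbox{cls}(\theta))^{\otimes n}.$$
This is nonzero because $\sigma(\mbox{cls}(\theta))=\mbox{cls}([\theta])\neq0$. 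Therefore $\mbox{cls}(\theta(n))\neq0$ for every $n\ge1$.
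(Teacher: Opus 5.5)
Your proposal is correct. Parts 1 and 2 follow the paper. Your bijection between ``split entry $i$ of a length-$\ell$ composition'' and ``merge positions $i,i+1$ of a length-$(\ell+1)$ composition'' is exactly the check of the paper's telescoping identity $\delta'\theta_{n,\ell}=-\delta''\theta_{n,\ell+1}$, together with $\delta''\theta_{n,1}=0$ and $\delta'\theta_{n,n}=0$. The deconcatenation count in Part 2 is also the paper's.

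Part 3 is where you take a different route. The paper inducts on $n$ and asserts $\bar\Delta\,\mbox{cls}(\theta(n))=\sum_{n_1+n_2=n}\mbox{cls}(\theta(n_1))\otimes\mbox{cls}(\theta(n_2))\neq 0$ because each summand is nonzero. That step tacitly needs K\"unneth, and also needs the summands to lie in distinct components $H^{n_1(\deg\theta-1)}\otimes H^{n_2(\deg\theta-1)}$, so that they cannot cancel. Your iterated coproduct $\bar\Delta^{(n-1)}\theta(n)=[\theta]^{\otimes n}$ reduces everything to a single term. It needs no induction and makes non-cancellation automatic. Its only input is that $\theta_{n,n}=[\theta|\cdots|\theta]$ occurs in $\theta(n)$ with coefficient $1$.

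That is why you should settle your multiplicity worry rather than leave it conditional. Under the literal recursion $\theta_{n,\ell+1}=\delta^{-1}\theta_{n,\ell}$, each composition of length $\ell+1$ is produced from $\ell$ different (parent, position) pairs. So $\theta_{n,\ell}$ is $(\ell-1)!$ times the composition sum, and this has two consequences:
\begin{itemize}
\item Identity (15) fails by a factor $\ell$. For example, the literal $\theta(3)$ has $\delta\theta(3)=[\theta^2|\theta]+[\theta|\theta^2]$, which is not zero in general.
\item For $n>p$ the coefficient $(n-1)!$ of $[\theta|\cdots|\theta]$ vanishes mod $p$, which would kill your Part 3.
\end{itemize}
So the composition-sum definition is forced, and it is what the paper silently uses in its own proof of Part 2.

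One small slip: $d_A\theta_2=\theta^2$ gives $\deg\theta_2=2\deg\theta-1$, so $\deg\theta_k=k(\deg\theta-1)+1$, not $k(\deg\theta+1)-1$. The parity is unchanged, so your sign analysis stands. The correct formula also shows that every term of $\theta(n)$ has degree $n(\deg\theta-1)$, so $\theta(n)$ is homogeneous. With your formula it would not be.
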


 \begin{proof}   Let $( \theta_n)_{n\geq 1}$ be an infinite \emph{Kraines  sequence} in $ (A, d_A)$.
Since each element $ \theta_n$ has odd degree, the differential $ \delta:= \delta'+ \delta''$ on the Adams  reduced   bar construction $\bB(A, d_A)$ defined at (2) is such that:
\begin{eqnarray}
    \renewcommand{\arraystretch}{1.5}
    \begin{array}{rll}
    \delta'  ( [ \theta_{i_1}| \theta_{i_2}|\cdots| \theta_{i_k}])& =  -\sum _{j=1}^k   [ \theta_{i_1}| \theta_{i_2}|\cdots|d_{ A} \theta_{i_j}|\cdots| \theta_{i_k}]\\ 
     \delta''([ \theta_{i_1}| \theta_{i_2}|\cdots| \theta_{i_k}])&= \sum _{j=2}^k 
  [ \theta_{i_1}| \theta_{i_2}|\cdots| \theta_{i_{j-1}} \theta_{i_j}|\cdots |
    \theta_{i_k}] 
        \end{array}
     \renewcommand{\arraystretch}{1} 
   \end{eqnarray}

  \begin{enumerate}
\item From  (14)  and the definition of a \emph{ Kraines sequence}, we have  the following equalities:\\
$ \delta'' \theta_{n,1}=  \delta''[ \theta_n] =0$ and  $ \delta' \theta_{n,1}=  \delta'[ \theta_n]  = -\sum _{i=1}^{n-1} [ \theta_i \theta_{n-i}]=- \delta'' \theta_{n,2} $,
\\
 $ \delta' \theta_{n,n}=  \delta'[ \theta_1| \theta_1|\cdots| \theta_1]=0$ and \\ $ \delta'' \theta_{n,n}=   \delta''[ \theta_1| \theta_1|\cdots| \theta_1] = \sum _{i=2}^k 
  [ \theta_1| \theta_1|\cdots| \theta_1^2|\cdots | \theta_1]  = - \delta' \theta_{n,n-1})$.
  
   For $\ell \geq 1$, we deduce from the definition (13) 
\begin{eqnarray}
     \delta' \theta_{n,\ell} =  -  \delta'' \theta_{n,\ell+1}\,, \qquad 1\leq \ell\leq n-1\,.   \end{eqnarray}
  
Applying  formula (15),
$$
\begin{array}{ll}
\delta  \theta(n) &= \delta' \theta(n) + \delta'' \theta(n) 
= \sum_{\ell=1}^n \delta' \theta_{n,\ell} + \sum_{\ell=1}^n \delta'' \theta_{n,\ell}\\
&=   \delta' \theta_{n, n} +\delta'' \theta_{n, 1}  =0.
\end{array}
$$
That is,  each $ \theta(n)$ is a cocycle in $\bB(A, d_A)$.

 \item  First observe that $ \bar \Delta  a_{n, 1}=   \bar\Delta [ \theta_n]=0$. For $\ell\geq 2$, applying (3),
$$
\renewcommand{\arraystretch}{0.3}
\begin{array}{ll}
 \bar \Delta  \theta(n)&=
\displaystyle \sum_{\ell=2}^n   \bar\Delta  \theta_{n,\ell} = \sum_{\ell=2}^n  \hspace{ 1mm} \displaystyle \sum_{i_1+...+i_\ell=n}  \hspace{ 1mm}   \bar\Delta [ \theta_{i_1}| \theta_{i_2}|\cdots \vert  \theta_{i_\ell}]  \\
 &= \displaystyle \sum_{\ell=2}^n  \sum_{i_1+...+i_\ell=n} \displaystyle  \sum _{j=1}^{\ell-1}     [ \theta_{i_1}\vert\cdots \vert  \theta_{i_j}]\otimes    [ \theta_{i_{j+1}}\vert\cdots \vert  \theta_{i_\ell}]     \\ 
 &=  \displaystyle  \sum_{\ell=2}^n \displaystyle   \sum _{j=1}^{\ell-1}  \sum_{n_1+n_2=n}\displaystyle  \sum_{ 
 \begin{array}{ll}
  i_1+ \cdots+i_j=n_1 \\ i_{j+1} \cdots+ i_\ell=n_2 \end{array}} \hspace{-4mm}  [ \theta_{i_1}\vert\cdots \vert  \theta_{i_j}]\otimes  [ \theta_{i_{j+1}}\vert\cdots \vert  \theta_{i_\ell}]  \\
&= \displaystyle  \sum_{\ell=2}^n \displaystyle   \sum _{j=1}^{\ell-1}  \sum_{n_1+n_2=n}   \theta_{n_1, j}\otimes  \theta_{n_2, \ell-j}\\
 &=  \displaystyle \sum_{n_1+n_2=n}   \displaystyle  \sum_{\ell=2}^n    \hspace{3mm}  \displaystyle   \sum _{  \renewcommand{\arraystretch}{0.1} \begin{array}{ll} \ell_1+\ell_2=\ell \\ 1\leq \ell_1\leq n_1\\1\leq \ell_2\leq n_2 \end{array}  \renewcommand{\arraystretch}{0.5}}  \hspace{-3mm}     \theta_{n_1, \ell_1}\otimes  \theta_{n_2, \ell_2}
 =\displaystyle   \sum _{n_1+n_2=n}  \theta(n_1) \otimes  \theta(n_2).\\
 \renewcommand{\arraystretch}{1}

 \end{array}$$

 \item We  proceed by induction on $n$.
 
 By hypothesis,  $\mbox{cls}(\theta(1))=\sigma(\mbox{cls}(\theta))\neq 0$.   
 
 Suppose that we have proved that the cohomology classes of  $\theta(1), \cdots, \theta(n-1)$ are not trivial.   By point 2 above, 
$H_\ast \bar \Delta\left( \mbox{cls } \theta(n)\right) = \sum _{n_1+n_2=n} \mbox{cls } \theta(n_1) \otimes \mbox{cls } \theta(n_2) \neq 0$.  Thus $\mbox{cls }( \theta(n)) \neq 0.$
 \end{enumerate} 
 
 \end{proof}
\begin{Lem}{\sl If $(\theta_n)_{n\geq 1}$ is an infinite \emph{Kraines sequence } in $(A, d_A)$ starting at $\theta$
 If  there exists a cocycle  of even degree $ b\in  A$,  then there exists an infinite sequence $( b_n)_{n\geq 0}$ in $ A$  such that 
$ b_1 =b$ and for $n\geq 1$,
\begin{eqnarray}d_A b_{n+1} = 
\sum_{i=1}^{n}( \theta_i b_{n+1-i}- b_i \theta_{n+1-i}).
\end{eqnarray}

}\end{Lem}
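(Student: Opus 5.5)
The plan is to build the $b_n$ by induction on $n$, with $b_1=b$. At each stage the right-hand side
$$R_{n}:=\sum_{i=1}^{n}(\theta_i b_{n+1-i}-b_i\theta_{n+1-i})$$
must be a coboundary in $(A,d_A)$, and we take $b_{n+1}$ to be an explicit primitive of it. Throughout, $\deg\theta_i$ is odd. By induction $\deg b_j$ is even, since $\deg b_{n+1}=\deg R_n-1$ and $\deg R_n=\deg\theta_i+\deg b_{n+1-i}$ is odd.

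\emph{The case $n=1$.} Here $R_1=\theta_1b-b\theta_1$. Both $\theta_1$ and $b$ are cocycles, and $(-1)^{\deg\theta_1\deg b}=1$. So the cup-one identity (6) gives $d_A(\theta_1\smile_1 b)=\theta_1 b-b\theta_1$, and we set $b_2=\theta_1\smile_1 b$. This shows that the cup-one product of Lemma 2.3.3 (\cite{S-E}) is the natural source of primitives.

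\emph{First step of the induction: $R_n$ is a cocycle.} Assume $b_1,\dots,b_n$ are constructed, so that $d_Ab_{j+1}=R_j$ for $j<n$. Expand $d_AR_n$ with the Leibniz rule, replacing $d_A\theta_i$ by $\sum_k\theta_k\theta_{i-k}$ (Definition 2.3.1) and $d_Ab_j$ by $R_{j-1}$. Since $\theta_i$ is odd and $b_j$ is even, the signs are $d(\theta_ib_j)=d\theta_i\,b_j-\theta_i\,db_j$ and $d(b_j\theta_i)=b_j\,d\theta_i$ (as $db_1=0$ and all signs are fixed). The resulting triple products $\theta\theta b$, $\theta b\theta$ and $b\theta\theta$ are indexed by compositions of $n+1$, and they cancel pairwise after reindexing. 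This is the same bookkeeping that gives $\delta\theta(n)=0$ in Lemma 3.1.2; it is routine.

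\emph{Second step: exhibit $R_n$ as a coboundary.} The natural candidate generalizes the case $n=1$:
$$b_{n+1}=\sum_{i=1}^{n}\theta_i\smile_1 b_{n+1-i}.$$
Applying (6) to $\theta_i\smile_1 b_j$ gives
$$d_A(\theta_i\smile_1 b_j)=\theta_ib_j-b_j\theta_i-(d_A\theta_i)\smile_1 b_j+\theta_i\smile_1 d_Ab_j .$$
Summed over $i+j=n+1$, the first two terms give exactly $R_n$. It remains to show that the correction terms cancel:
$$\sum_{i+j=n+1}\bigl(-(d_A\theta_i)\smile_1 b_j+\theta_i\smile_1 d_A b_j\bigr)=0 .$$
For the first term, write $d_A\theta_i=\sum\theta_k\theta_{i-k}$ and expand $(\theta_k\theta_{i-k})\smile_1 b_j$ with the Hirsch formula (7). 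For the second, substitute $d_Ab_j=R_{j-1}$, whose terms have the form $\theta_k\smile_1(\theta_lb_m)$ and $\theta_k\smile_1(b_m\theta_l)$. Both sides are then expressed through terms of type $\theta(\theta\smile_1 b)$ and $(\theta\smile_1 b)\theta$, which should match by the inductive definition of the lower $b$'s.

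\emph{Main obstacle.} The hard part is this second step. Formula (7) only distributes $\smile_1$ over a product in the \emph{first} variable, whereas $\theta_k\smile_1(\theta_lb_m)$ requires a right-hand Hirsch identity, which is not among the stated axioms and holds for singular cochains only up to homotopy. What I would try first is to reorganize the candidate by also allowing terms $b_j\smile_1\theta_i$, so that every $\smile_1$ that must be distributed has the product in the left slot.

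If that fails, I would fall back on an obstruction argument. Since $R_n$ is a cocycle, it suffices to show that $\mathrm{cls}(R_n)=0$. One could prove this by adjoining a formal sequence of even elements $b_n$ to a free model (via Proposition 2.4.1, replacing $A$ by $(TV,d_V)$ with $\theta_1,b$ represented by generators as in Corollary 2.4.3). There one checks that the analogous class is a commutator-type class killed by the cup-one structure, and the result is then transported back along the quasi-isomorphism $\varphi_A$.
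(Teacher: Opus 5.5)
Your proposal does not prove the lemma. The decisive second step is left open, and your choice of candidate is what makes it hard. With $b_{n+1}=\sum_{i=1}^{n}\theta_i\smile_1 b_{n+1-i}$, the correction terms $\theta_i\smile_1 d_Ab_j$ are nonzero for $j\geq 2$, and nothing makes them cancel.

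Already for $n=2$ your candidate is $b_3=\theta_2\smile_1 b+\theta_1\smile_1 b_2$. The first summand alone has coboundary $R_2$, as computed below. So your candidate works only if $\theta_1\smile_1 b_2$ is a cocycle, i.e.\ only if $\theta_1b_2-b_2\theta_1+\theta_1\smile_1(\theta_1b-b\theta_1)=0$, and nothing in (6)--(7) guarantees that. Neither fallback closes the gap. The ``reorganization'' is not specified. The obstruction argument is only sketched, and the cup-one structure lives on $A$, not on the minimal model $(TV,d_V)$, so ``killed by the cup-one structure'' has no content there.

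The missing idea is the one you already used for $n=1$: keep the \emph{original} cocycle $b$ in the right slot at every stage, i.e.\ set $b_{n+1}:=\theta_n\smile_1 b$. In (6), the term $\theta_n\smile_1 d_Ab$ then vanishes because $b$ is a cocycle. The only correction term is $(d_A\theta_n)\smile_1 b=\sum_{i=1}^{n-1}(\theta_i\theta_{n-i})\smile_1 b$, which needs only the left Hirsch formula (7). Since $\deg\theta_i$ is odd and $\deg b$ is even, (7) gives $(\theta_i\theta_{n-i})\smile_1 b=-\theta_i b_{n+1-i}+b_{i+1}\theta_{n-i}$. Hence
\[
d_Ab_{n+1}=\theta_nb-b\theta_n+\sum_{i=1}^{n-1}\theta_ib_{n+1-i}-\sum_{i=2}^{n}b_i\theta_{n+1-i}=\sum_{i=1}^{n}(\theta_ib_{n+1-i}-b_i\theta_{n+1-i}).
\]
This is the paper's proof. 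It needs no right-hand Hirsch identity and no separate check that $R_n$ is a cocycle. The extra terms $\theta_i\smile_1 b_{n+1-i}$ with $i<n$ in your candidate are exactly what creates the obstruction you ran into.
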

\begin{proof}

We set $$ b_1= b; \quad \ b_{i+1}:= \theta_i\smile_1 b_1, \quad i\geq 1.$$
Indeed, keeping in mind that $\mbox{deg}( b_1)$ is even and $\mbox{deg}( a_i)$ is odd, $$\begin{array}{ll}
d_A b_{n+1}&= d_A( \theta_n\smile_1 b_1)= \theta_n b_1- b_1 \theta_n-(\sum_{i=1}^{n-1} \theta_i \theta_{n-i})\smile_1 b_1\\
&=\theta_n b_1- b_1 \theta_n- \sum_{i=1}^{n-1} (-1)^{\mbox{deg}(\theta_i)} \theta_i( \theta_{n-i}\smile_1 b_1)\\
&- \sum_{i=1}^{n-1} (-1)^{\mbox{deg}( \theta_{n-i})\mbox{deg}(b_1)}( \theta_i\smile_1 b_1) \theta_{n-i}\\
&= \theta_n b_1- b_1 \theta_n +\sum_{i=1}^{n-1}  \theta_i b_{n+1-i}-\sum_{i=1}^{n-1}  b_{i+1} \theta_{n-i}. \end{array}$$
We make a change of variable in the second sum: $k=i+1$
and obtain that 
$$d_A b_{n+1}=\sum_{i=1}^{n}( \theta_i b_{n+1-i}- b_i \theta_{n+1-i}).$$

\end{proof}

\vspace{3mm}
\begin{Def}

\item  Consider $[ c_1|..| c_k]\in \bB_k( A, d_A)$ with $ c_i=  \theta_i$ or $ c_i= b_i$, we define 

$$
\delta ^{-1}_i[ c_1|..| c_k] = 
\left\{ 
\begin{array}{ll}
\sum_{j=1}^{i-1}[ c_1|..| \theta_i| \theta_{i-j}| c_{i+1}|..| c_k] &\mbox{ if }  c_i= \theta_i\\
\sum_{j=1}^{i-1}[ c_1|..| b_i| \theta_{i-j}| c_{i+1}|...| c_k]-[ c_1|..| \theta_i| b_{i-j}| c_{i+1}|..| c_k] &\mbox{ if } c_i= b_i\,,
\end{array} \right.
$$
$$ \delta ^{-1}[ c_1|..| c_k] = \sum_{i=1}^k \delta ^{-1}_i[ c_1|..| c_k]$$ and $$ \delta ^{-1}(\sum_{i\in I}\lambda_i[ c_{i_1}|..| c_{i_k}])= \sum_{i\in I}\lambda_i \delta ^{-1}[c_{i_1}|..| c_{i_k}],\quad  \lambda_i\in \mathbb F_p.$$

 Set $$   b_{n,1}:= [ b_n];\quad   b_{n,\ell+1}=  \delta ^{-1} b_{n,\ell}, \ell\geq 1,   b(n)= \sum_{\ell=1}^n  b_{n,\ell}.$$

\end{Def}
\begin{Lem}{\sl For every $n\geq 1$,
\begin{enumerate}
\item $\delta b(n)=0$.
\item The reduced  coproduct satisfies $$\bar \Delta b(n)=\sum_{n_1+n_2, n_1, n_2\geq 1}(b(n_1)\otimes \theta(n_2)+\theta(n_1)\otimes b(n_2)).$$
\item If $\sigma(\mbox{cls}(b))\neq 0$, then $\mbox{cls}(b(n))\neq 0$ for every $n\geq 1$.
\end{enumerate}
}\end{Lem}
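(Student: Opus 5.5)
I will follow the pattern of the preceding lemma on $\theta(n)$ and treat the three points in order. First I make the combinatorial description of $b(n)$ explicit. Unwinding the definition of $\delta^{-1}$, $b_{n,\ell}$ should be the sum over compositions $i_1+\cdots+i_\ell=n$ of words $[c_{i_1}|\cdots|c_{i_\ell}]$ in which exactly one letter is a $b_{i_j}$ and all the others are $\theta$'s. The words have the sign $+$ when the $b$ letter sits in the appropriate position relative to the splitting, and $-$ from the second branch of $\delta^{-1}_i$. I would prove this closed form by induction on $\ell$, exactly as $\theta_{n,\ell}$ is the sum over compositions of length $\ell$ of $[\theta_{i_1}|\cdots|\theta_{i_\ell}]$. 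Since every word of $b(n)$ contains exactly one $b$-letter, this closed form is what makes both the differential and the coproduct computable.

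For point 1, I compute $\delta'$ and $\delta''$ on a word with one $b$-letter. The key observation is that $b$ has even degree, so $sb$ has odd degree, while $s\theta_i$ has even degree; hence the signs $\epsilon_i$ in (2) are governed only by whether the $b$-letter has been passed. Applying $\delta'$ to the $b$-letter uses the relation (16) of the previous lemma, and applying it to a $\theta$-letter uses the Kraines relation (5). Either way we get a sum of words of length $\ell$ with one product $\theta_i\theta_j$, $\theta_ib_j$ or $b_i\theta_j$ in one slot. These terms should cancel exactly against $\delta''$ applied to the words of length $\ell+1$. The result is the analogue of (15), namely $\delta' b_{n,\ell}=-\delta'' b_{n,\ell+1}$ for $1\le \ell\le n-1$. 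The boundary terms are $\delta'' b_{n,1}=0$ and $\delta' b_{n,n}=0$. The latter holds because $b_{n,n}$ involves only $b_1$ and $\theta_1$, and both are cocycles ($b_1=b$ is a cocycle by hypothesis). Telescoping then gives $\delta b(n)=0$. I expect this sign bookkeeping to be the main obstacle. It is here that the minus sign in (16) and the minus sign in the second branch of $\delta^{-1}_i$ must match the sign $(-1)^{\epsilon_i}$ produced when a $\delta''$-contraction occurs to the right of the odd letter $sb$. I would check the cases $n=2,3$ by hand first to fix the sign conventions.

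Point 2 is the same reindexing as in the proof for $\bar\Delta\theta(n)$, using formula (3). Cutting a word $[c_{i_1}|\cdots|c_{i_\ell}]$ after position $j$ leaves the unique $b$-letter either in the left factor or in the right factor. Regrouping by $n_1=i_1+\cdots+i_j$ and $n_2=n-n_1$, and summing over $\ell$, the left-$b$ terms assemble into $b(n_1)\otimes\theta(n_2)$ and the right-$b$ terms into $\theta(n_1)\otimes b(n_2)$. This uses the closed forms of $b_{n,\ell}$ and $\theta_{n,\ell}$, provided the signs in the closed form factor compatibly across the cut.

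For point 3, I argue by induction on $n$. For the base case, $\mathrm{cls}(b(1))=\mathrm{cls}([b])=\sigma(\mathrm{cls}(b))\neq 0$. Assume $\mathrm{cls}(b(k))\neq0$ for $k<n$, and note that $\mathrm{cls}(\theta(k))\neq0$ by the earlier lemma (with the standing assumption $\sigma(\mathrm{cls}(\theta))\neq0$). Then $H\bar\Delta(\mathrm{cls}\,b(n))=\sum(\mathrm{cls}\,b(n_1)\otimes\mathrm{cls}\,\theta(n_2)+\mathrm{cls}\,\theta(n_1)\otimes\mathrm{cls}\,b(n_2))$. This is nonzero: for $n\ge2$, the term $\mathrm{cls}\,b(n-1)\otimes\mathrm{cls}\,\theta(1)$ cannot cancel against the others because they differ in bidegree or in linear independence. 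I would make this precise by using that $\mathrm{cls}\,b(1)$ and $\mathrm{cls}\,\theta(1)$ are linearly independent primitives, of odd and even degree respectively, and projecting onto the summand $H\bar{\mathbb B}\otimes \mathbb F_p\sigma(\mathrm{cls}\,\theta)$. Hence $\mathrm{cls}\,b(n)\neq0$.
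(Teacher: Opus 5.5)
Your three steps are the paper's own: the telescoping $\delta' b_{n,\ell}=-\delta'' b_{n,\ell+1}$ with $\delta'' b_{n,1}=0=\delta' b_{n,n}$, the regrouping of the cut for $\bar\Delta$, and induction through the coproduct. However, point~1 fails with the signs you describe, i.e.\ with the minus sign of the second branch of $\delta^{-1}_i$ carried into your closed form. The paper's definition has the same sign, and for odd $p$ its (18) does not hold with it.

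With the conventions of (2), $\delta'$ carries sign $-$ on the $b$-letter and on the letters to its left, and sign $+$ on the letters to its right. A $\delta''$-contraction $c_{r-1}c_r$ has sign $+$ if $c_{r-1}$ lies strictly left of the $b$-letter, and $-$ otherwise. So the minus sign of (16) on $b_a\theta_c$ is absorbed by the contraction sign of $[\cdots|b_a|\theta_c|\cdots]$. The contraction of $[\cdots|\theta_a|b_c|\cdots]$ carries $+$, so it cancels the term $-[\cdots|\theta_ab_c|\cdots]$ of $\delta'$ only if that word has coefficient $+1$. Already for $n=2$ one gets $\delta\bigl([b_2]+[b_1|\theta_1]-[\theta_1|b_1]\bigr)=-2[\theta_1b_1]$, while $\delta\bigl([b_2]+[b_1|\theta_1]+[\theta_1|b_1]\bigr)=0$.

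The correct object is $b_{n,\ell}=\sum_{i_1+\cdots+i_\ell=n}\sum_{j=1}^{\ell}[\theta_{i_1}|\cdots|\theta_{i_{j-1}}|b_{i_j}|\theta_{i_{j+1}}|\cdots|\theta_{i_\ell}]$ with every coefficient $+1$. With it, the four kinds of contractions ($\theta|\theta$ left of $b$, $\theta|b$, $b|\theta$, $\theta|\theta$ right of $b$) cancel pairwise against $\delta'$. In point~2 the cut then factors with no sign issue, since (3) has no signs.

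Take this closed formula (and the analogous one for $\theta_{n,\ell}$) as the definition, rather than deriving it by induction from $\delta^{-1}$. Iterating $\delta^{-1}$, which splits every letter, produces each word of length $\ell$ with multiplicity $(\ell-1)!$; for example $\delta^{-1}\theta_{n,2}=2\sum_{i_1+i_2+i_3=n}[\theta_{i_1}|\theta_{i_2}|\theta_{i_3}]$. Those multiplicities break the telescoping.

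Your point~3 is correct, and it actually repairs the paper, which only asserts that the sum is nonzero ``by induction hypothesis'' without excluding cancellation. To make your projection precise, let $\deg\theta=2m+1$. Then $\deg\mbox{cls}\,\theta(k)=2mk$, and every $\mbox{cls}\,b(k)$ has odd degree. Take a linear form $f$ on $H^{2m}\bar{\mathbb B}(A,d_A)$ with $f(\sigma(\mbox{cls}\,\theta))=1$, extended by zero to the other degrees. Then $(\mathrm{id}\otimes f)\,\bar\Delta\,\mbox{cls}\,b(n)=\mbox{cls}\,b(n-1)$. This needs only $\sigma(\mbox{cls}\,\theta)\neq 0$, which, as you say, must be added to the stated hypothesis; the paper uses it implicitly through Lemma 3.1.2-3.
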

\begin{proof}

\begin{enumerate}
\item  With the definition  3.1.4, keeping in mind that $\deg(\theta_i)$ is odd and $\deg(\hat b_i)$ is even,  a direct computation yields  \begin{eqnarray} \delta'' b_{n,1}=  0 , \quad \delta'_0\hat b_{n,n}=0,\end{eqnarray}  

  \begin{eqnarray}\delta'' b_{n,\ell+1} =\delta''(\delta^{-1}b_{n,\ell})=   -  \delta'_0b_{n,\ell}\,, \qquad 1\leq \ell\leq n-1.\end{eqnarray}
 From (17) and (18), we deduce that \begin{eqnarray} \delta b(n)=0\end{eqnarray}
\item   We observe that $ \bar \Delta b_{n, 1}=   \bar \Delta [ b_n]=0$. For $\ell\geq 2$ and applying (6), 
$$\begin{array}{ll}
\bar  \Delta \hat b(n)&=
\displaystyle \sum_{\ell=2}^n     \bar \Delta b_{n,\ell} =  \displaystyle \sum_{\ell=2}^n  \sum_{i_1+...+i_\ell=n} \displaystyle  \sum _{j=1}^{\ell-1}   [ c_{i_1}\vert\cdots \vert \hat c_{i_j}]\otimes    [ c_{i_{j+1}}\vert\cdots \vert  c_{i_\ell}]     \\ 
 &=  \displaystyle  \sum_{\ell=2}^n \displaystyle    \sum_{n_1+n_2=n}\displaystyle  \sum_{ 
 \begin{array}{ll}
  i_1+ \cdots+i_j=n_1 \\ i_{j+1} \cdots+ i_\ell=n_2 \end{array}} \hspace{1mm}\sum _{j=1}^{\ell-1}  [ c_{i_1}\vert\cdots \vert  c_{i_j}]\otimes  [ c_{i_{j+1}}\vert\cdots \vert  c_{i_\ell}]  \\
&= \displaystyle  \sum_{\ell=2}^n \displaystyle    \sum_{n_1+n_2=n}  \sum _{j=1}^{\ell-1}  \theta_{n_1, j}\otimes b_{n_2, \ell-j}+b_{n_1, j}\otimes  \theta_{n_2, \ell-j}\\
 &=  \displaystyle \sum_{n_1+n_2=n}   \displaystyle  \sum_{\ell=2}^n    \hspace{1mm}  \displaystyle   \sum _{  \renewcommand{\arraystretch}{0.1} \begin{array}{ll} \ell_1+\ell_2=\ell \\ 1\leq \ell_1\leq n_1\\1\leq \ell_2\leq n_2 \end{array}  \renewcommand{\arraystretch}{0.5}}  \hspace{1mm}     \theta_{n_1, \ell_1}\otimes b_{n_2, \ell_2}+b_{n_1, \ell_1}\otimes  \theta_{n_2, \ell_2}\\
 &
 =\displaystyle   \sum _{n_1+n_2=n}  \theta(n_1) \otimes b(n_2)+ b(n_1) \otimes  \theta(n_2).\\
 \renewcommand{\arraystretch}{1}

 \end{array}$$

\item  We proceed by induction on $n$ to prove that $\mbox{cls}(b(n))\neq 0$.

By hypothesis,   $\mbox{cls}(b(1))=\sigma(\mbox{cls}(b))\neq 0$.

 Suppose proved that $\mbox{cls}(b(k))\neq 0$ for $1\leq k\leq n$.

 Recall that by Lemma 3.1.2-3, $\mbox{cls}(\theta(n))\neq 0$ for every $n\geq 1$.
 By point 2 above,  
  $  \bar \Delta \mbox{cls}( b(n+1)) =\sum_{n_1+n_2=n+1} \mbox{cls}(\theta(n_1))\otimes \mbox{cls}( b(n_2))+\mbox{cls}( b(n_1))\otimes \mbox{cls}( \theta(n_2))\neq 0$  by induction hypothesis. Thus $\mbox{cls}( b(n+1))\neq 0$.\end{enumerate} 
\end{proof}

\begin{Pro}{\sl  Let  $\beta\in H_1^*(A, d_A)\smallsetminus\{0\}$, $\beta=s\alpha$ such that $\sigma(\alpha)\neq 0$.
  If    there exists an infinite \emph{Kraines sequence} $( \theta_n)_{n\geq 1}$ in $(A, d_A)$ starting at $ \theta$  such that $\sigma(\mbox{cls}(\theta))\neq 0$,
  then, 
 there exist two infinite sequences $(\chi_n)_{n\geq 1}$ and $(\psi_n)_{n\geq 1}$ in $H^*\bB (A, d_A)\smallsetminus\{0\}$ such that

  for every $n\geq 1$, $\deg(\chi_n)$ is odd and $\deg(\psi_n)$ is even. 
 }\end{Pro}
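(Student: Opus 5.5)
The plan is to use the two families of bar-construction cocycles built above: the $\theta(n)$ of Lemma 3.1.2 and the $b(n)$ of Lemma 3.1.4. One family will supply classes of one parity and the other family the opposite parity. The Kraines sequence satisfies $\deg\theta_k = k\deg\theta - (k-1)$, because $d_A\theta_k$ has the degree of $\theta_i\theta_{k-i}$. Since $\deg\theta$ is odd, every $\theta_k$ is odd. Hence every summand $[\theta_{i_1}|\cdots|\theta_{i_\ell}]$ of $\theta(n)$ has bar degree $\sum(\deg\theta_{i_j}-1)$, which is even. So the classes $\psi_n:=\mbox{cls}(\theta(n))$ lie in $H^{even}\bB(A,d_A)$, and they are nonzero for all $n\geq 1$ by Lemma 3.1.2(3), since $\sigma(\mbox{cls}(\theta))\neq 0$ is part of the hypothesis. (One should also check that $\theta(n)$ is homogeneous, i.e.\ that all its summands have the same degree $n\deg\theta-n$; this follows from the degree formula for $\theta_k$.)

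For the odd classes I would use the sequence $(b_n)$ of Lemma 3.1.3, which needs an even-degree cocycle $b$ with $\sigma(\mbox{cls}(b))\neq 0$. The hypothesis $\beta=s\alpha\in H^*_1(A,d_A)$ with $\sigma(\alpha)\neq 0$ gives, via Lemma 2.3.6, $\sigma(\beta)\neq 0$, with $\deg\beta=\deg\alpha-1$. So exactly one of $\alpha,\beta$ has even degree, and both have nonzero loop suspension. I take $b$ to be a cocycle representative of whichever of $\alpha,\beta$ is even. Then Lemma 3.1.3 produces $(b_n)$ with $b_1=b$, each $b_{n+1}=\theta_n\smile_1 b$ being even (since the degrees of $\smile_1$ drop by one). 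Lemma 3.1.4(3) gives $\mbox{cls}(b(n))\neq 0$ for all $n$. Each summand of $b(n)$ contains exactly one even entry $b_i$ and otherwise odd entries $\theta_j$. The even entry contributes an odd suspended degree, and the odd entries contribute even ones, so $b(n)$ has odd total degree. Setting $\chi_n:=\mbox{cls}(b(n))\in H^{odd}\bB(A,d_A)\smallsetminus\{0\}$ finishes the argument.

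The main obstacle is ensuring that a suitable even cocycle really exists and that Lemmas 3.1.3 and 3.1.4 apply to it. There are two checks. First, the representative of $\beta$ must be an honest mod $p$ cocycle: taking $\mbox{red}_p(\hat b)$ works, since $d\hat b=p^\epsilon\hat a$. Second, the degrees in the $\delta^{-1}$ construction must be homogeneous. If $\deg\alpha$ is even, $b$ represents $\alpha$. If $\deg\alpha$ is odd, then $\beta$ is even and $b$ represents $\beta$; here I invoke Lemma 2.3.6 for $\sigma(\beta)\neq 0$. I would also briefly confirm homogeneity of $b(n)$. By Lemma 3.1.3, $\deg b_k=\deg b+(k-1)(\deg\theta-1)$, so every bracket in $b(n)$ has degree $\deg b+n\deg\theta-n-(n-1)-1$ up to a fixed shift. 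This makes each $\chi_n$ a well-defined homogeneous odd class.
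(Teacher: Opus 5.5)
Your proposal is correct and is essentially the paper's own proof: take $b$ to represent whichever of $\alpha,\beta$ has even degree, using the lemma $\sigma(\alpha)\neq 0\Rightarrow\sigma(\beta)\neq 0$ (the paper's Lemma 2.3.7) in the case $\deg\alpha$ odd, then set $\psi_n=\mbox{cls}(\theta(n))$ and $\chi_n=\mbox{cls}(b(n))$ via Lemma 3.1.2 and the $b(n)$ lemma (the paper's Lemma 3.1.5); your degree and homogeneity bookkeeping fills in what the paper only asserts. One small slip: the common degree of the brackets in $b(n)$ is $\deg b-1+(n-1)(\deg\theta-1)$, which is odd, as your entry-by-entry argument correctly shows, whereas the formula you wrote, $\deg b+n\deg\theta-2n$, would have the parity of $n$.
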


 \begin{proof} 
 \begin{enumerate}
 \item If $\deg(\alpha)$ is odd, then $\deg(\beta)$ is even. We  choose $b$ a representative of $\beta$. Since  $\beta=s\alpha$, by Lemma 2.3.7, $\sigma (\mbox{cls}(b))=\sigma (\beta)=\mbox{cls}([b])\neq 0$.
 \item If $\deg(\alpha)$ is even, we choose $b$ a representative of $\alpha$. By assumption $\sigma(\alpha)\neq 0$.\end{enumerate}Therefore,
 \begin{enumerate}
 
\item We apply Lemma 3.1.2 and obtain an infinite sequence $(\theta(n))_{n\geq 1}$ in $\bB A$ of even degrees elements such that for every $n\geq 1$, $\mbox{cls}(\theta_n)\neq 0$. We set $\psi_n = \mbox{cls}(\theta(n))$.
\item We apply Lemma 3.1.5 and obtain an infinite sequence $(b(n))_{n\geq 1}$  in $\bB A$ of odd degrees elements such that for every $n\geq 1$, $\mbox{cls}(b(n))\neq 0$. We set $\chi_n = \mbox{cls}(b(n))$.   

\end{enumerate}\end{proof}
 \vspace{3mm}
 \begin{Rem} From Lemma 3.1.2-2 and Lemma 3.1.5-2 we deduce that, for every $n\geq 1$:
$$  \begin{array}{ll}
  \Delta \psi_n &=\sum_{n_1+n_2}\psi_{n_1}\otimes \psi_{n_2}\\
  \Delta \chi_n&=\sum_{n_1+n_2}(\chi_{n_1}\otimes \psi_{n_2}+\psi_{n_1}\otimes \chi_{n_2}).\end{array}$$
 \end{Rem}
 \subsection{Second step}
In this section, we prove the existence of an infinite \emph{Kraines sequence} in $(A, d_A)$, the DG algebra of normalized  singular cochains on a simply connected finite  CW complex $X$ with coefficients on $\mathbb F_p$ .
 \begin{Lem}{\sl Suppose that $(A, d_A)$ is the DG algebra of normalized  singular cochains on a simply connected finite  CW complex $X$ with coefficients on $\mathbb F_p$.

Let   $\alpha\in H^{2m+1}(A, d_A)\smallsetminus\{0\}$, $m\geq 1$. There exists a cocycle $\theta$  in $A$ and an infinite \emph{Kraines sequence} in $A$ starting at $\theta$.
}\end{Lem}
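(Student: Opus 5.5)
The plan is to build the sequence $(\theta_n)_{n\geq 1}$ by induction on $n$, treating each new term as the solution of an obstruction problem. I would do the constructions on integral cochains $\hat A$ and on $\hat A/p^{\epsilon}\hat A$, and then reduce mod $p$; this is where the cup-one product (Lemma 2.3.3) and Lemmas 2.3.8--2.3.9 enter.

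\emph{Start.} Choose an integral cocycle $\hat\theta\in\hat A^{2m+1}$ whose reduction $\theta=\mbox{red}_p(\hat\theta)$ represents $\alpha$, if $\alpha$ lies in the $H_0^*$ summand. If $\alpha\in H^*_1(A,d_A)$, use the description of Remark 2.3.6 and work in $\hat A/p^{\epsilon}\hat A$ instead. Set $\theta_1=\theta$.

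\emph{Second term.} Since $\deg\theta$ is odd, identity (6) gives
$$d_A(\theta\smile_1\theta)=2\theta^2 .$$
For $p$ odd I take $\theta_2=\tfrac12\,\theta\smile_1\theta$. For $p=2$ I would work integrally, with $\hat\theta_2=\tfrac12(\hat\theta\smile_1\hat\theta+\hat c)$ for a correction $\hat c$ making the expression integral. This is the first place the torsion bookkeeping is needed.

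\emph{Inductive step.} Suppose $\theta_1,\dots,\theta_{N}$ satisfy (5) up to $N$. The element
$$\omega_{N+1}=\sum_{i=1}^{N}\theta_i\theta_{N+1-i}$$
is a cocycle; this is the standard computation using (5) and the fact that the $\theta_i$ have odd degree. We must show $\mbox{cls}(\omega_{N+1})=0$, possibly after changing $\theta_2,\dots,\theta_N$ by cocycles.

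I would first lift to $\hat A/p^{\epsilon}\hat A$. By Lemma 2.3.9(2), which rests on Kraines' rational theorem, the class of the lifted $\omega_{N+1}$ is torsion. By Lemma 2.3.9(1) it comes from $\mbox{tor}H^*(\hat A,d_{\hat A})$. Lemma 2.3.8 then lets me modify the lifts by elements of $\ker\mbox{red}_p$. The aim is that after reducing mod $p$, $\omega_{N+1}$ becomes a coboundary $d_A\theta_{N+1}$. Iterating this gives the infinite sequence.

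\emph{Main obstacle.} The hard step is this last one: going from ``the obstruction is torsion integrally'' to ``the obstruction vanishes mod $p$''. This is genuinely delicate, because the class $\mbox{cls}(\omega_{N+1})$ is a $(N+1)$-fold Massey power of $\alpha$. At $N+1=p$, Kraines' formula identifies the $p$-fold Massey power $\langle\alpha\rangle^p$ with $-\beta P^m\alpha$, which need not vanish. So I would first try to use the freedom in choosing $\theta$ among representatives of classes of degree $2m+1$, and the freedom in the intermediate terms, to kill the $p$-th obstruction. Failing that, I would allow $\theta$ to represent a different odd-degree class, for instance a suitable class built from $\alpha$, and check that the later argument only needs $\sigma(\mbox{cls}(\theta))\neq0$. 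I expect this step to require an additional hypothesis or a careful choice of $\theta$; the statement as worded guarantees only some cocycle $\theta$, not that $\theta$ represents $\alpha$. Note that $\theta=0$ gives a degenerate Kraines sequence, so only the version needed in Proposition 3.1.6, with $\sigma(\mbox{cls}(\theta))\neq0$, has content.
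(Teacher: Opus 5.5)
\textbf{There is a genuine gap.} Your proposal stops exactly where the proof has to happen. As you note, the lemma as worded is satisfied by $\theta=0$, so its content lies in the choice of $\theta$, and you never make one. The inductive step, turning ``$\mbox{cls}(\omega_{N+1})$ is torsion'' into ``$\omega_{N+1}$ is a coboundary mod $p$'', is left open. With $\theta$ representing $\alpha$ it cannot be closed in general.

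It already fails at your second term for $p=2$. The identity $d(\hat\theta\smile_1\hat\theta)=2\hat\theta^2$ only shows that $\mbox{cls}(\hat\theta^2)$ is $2$-torsion. Its reduction $\alpha^2=Sq^{2m+1}\alpha=Sq^1Sq^{2m}\alpha$ need not vanish: in $H^*(G_2;\mathbb F_2)$ one has $x_3^2\neq 0$. So no correction $\hat c$ exists, and no Kraines sequence of length $2$ starts at any representative of $x_3$. For odd $p$, the formula $-\beta P^m\alpha$ that you quote from Kraines shows that the $p$-th obstruction can be nonzero.

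Your step ``torsion in $H^*(\hat A/p^{\epsilon}\hat A)$, hence coming from $\mbox{tor}H^*(\hat A,d_{\hat A})$'' rests on Lemma 2.3.10 (your 2.3.9), which cannot carry it. Part 2 is automatic, since $(\hat A/p^{\epsilon}\hat A)\otimes_{\mathbb Z}\mathbb Q=0$. Part 1 is false as stated: a generator of $H^3(S^3;\mathbb Z)$ reduces to a nonzero $p$-torsion class mod $p^{\epsilon}$ but has infinite order.

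What you list as a fallback is precisely the paper's route, and it is the missing idea. The paper never tries to kill the obstruction at $\alpha$. If the Kraines sequence at $a$ stops at length $N$, the paper does the following:
\begin{enumerate}
\item It lifts $a_1,\dots,a_N$ to $\hat a_n$ with $d_{\hat A}\hat a_n=\sum_i\hat a_i\hat a_{n-i}+\hat\zeta_n$, where $\hat\zeta_n\in\ker\mbox{red}_p$.
\item It uses cup-one products in $A_{\epsilon_N}$ to show that $\beta_{\epsilon_N}$ kills the reduced obstruction. Lemma 2.3.8 then gives $\hat\zeta_{N+1}$ making $\sum\hat a_i\hat a_{N+1-i}+\hat\zeta_{N+1}$ an integral cocycle.
\item It argues that this cocycle is $p$-torsion and picks $\hat a_{N+1}$ with $d_{\hat A}\hat a_{N+1}=p^{\epsilon_{N+1}}(\sum\hat a_i\hat a_{N+1-i}+\hat\zeta_{N+1})$.
\item It \emph{restarts} at the new odd-degree cocycle $a_{N+1}=\mbox{red}_p(\hat a_{N+1})$. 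Its class $s\,\mbox{cls}(\sum a_ia_{N+1-i})$ is a (higher) Bockstein preimage of the obstruction.
\end{enumerate}
Termination comes from boundedness of $H^*(A,d_A)$. Each restart raises the degree, and once $2\deg\theta>\dim X$ every obstruction lands in a zero cohomology group. Relating the final $\theta$ back to $\alpha$, i.e.\ $\sigma(\mbox{cls}(\theta))\neq0$, is deferred to Lemma 3.2.2.

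The paper's text shares your weak points. Its claim that $\alpha^2=0$ for $p=2$ ``by Adem relations'' is refuted by $G_2$. The restart itself copes with $N=1$; for instance it replaces $x_3$ by $x_5=Sq^2x_3$. Its torsion step in the case $\hat\zeta_{N+1}\neq0$ is exactly the appeal to Lemma 2.3.10 criticized above. That step needs an independent argument before either version is a proof.
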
  
\begin{proof}
Let $\alpha\in H^{2m+1}(A, d_A)\smallsetminus\{0\}$ represented by $a$. 
  
  \begin{enumerate}
  \item If there exists an infinite \emph{Kraines sequence} in $(A, d_A)$ starting at $a$, we set $\theta=a$. 
  
  \item Suppose that every \emph{Kraines sequence} staring at $a$ is finite.
  \begin{enumerate} 
  \item Observe that  $\alpha^2=\mbox{cls}(a)^2=0$. 
  
  For $p\geq 3$ this is a consequence of the graded commutativity of the product on $H^*(A, d_A)$.  
  
   For $p=2$, it derives from  Adem's relations on the decomposability of  the Steenrod operation $Sq^k$ for $k>1$, $k$  odd.

In both cases, there exists $a_2\in A$ such that $d_Aa_2=a^2$.

\item Let $N\geq 2$ be the greatest integer such that  $(a_n)_{1\leq n\leq N}$ is a \emph{Kraines sequence} starting at $a$, that is, for $1\leq n\leq N$, $d_Aa_n=\sum_{i=1}^{n-1}a_ia_{n-i}$ and  $\mbox{cls}(\sum_{i=1}^{N} a_i a_{N+1-i})\neq 0.$ 

 For $1\leq n\leq N$, there exist $\hat a_n\in \hat A$ and $\hat \zeta_n\in ker\hspace{1mm}\mbox{red}_p$ such that  \begin{enumerate}
 \item $\mbox{red}_p(\hat a_n)=a_n$ and $d_{\hat A}\hat a_n =\sum_{i=1}^{n-1}\hat a_i\hat a_{n-i}+\hat \zeta_n$,
 \item $d_{\hat A}\sum_{i=1}^{N}\hat a_i\hat a_{N+1-i}=\sum_{i=1}^N(\hat \zeta_i\hat a_{N+1-i}-\hat a_i\hat \zeta_{N+1-i})$.
 \end{enumerate}
 \item We prove  the existence of  $\hat \zeta_{N+1}\in ker\hspace{1mm}\mbox{red}_p$ such that $d_{\hat A}(\sum_{i=1}^{N}\hat a_i\hat a_{N+1-i}+\hat \zeta_{N+1})=0$.
 \begin{enumerate}
 \item Recall that for  $1\leq n\leq N$,  $\hat \zeta_n\in ker\hspace{1mm}\mbox{red}_p$ and set $\hat \zeta_n=p^{\epsilon_n}\hat \zeta'_n $. Note $\epsilon_N=\inf\{\epsilon_n \geq 1, 1\leq n\leq N\}$ and set $\hat \zeta_n=p^{\epsilon_{N}}\hat z_n.$
 
  Thus $d_{\hat A}\sum_{i=1}^N\hat a_i\hat a_{N+1-i}=p^{\epsilon_N}\sum_{i=1}^N(\hat z_{N+1-i}\hat a_i-\hat a_i\hat z_{N+1-i})$.
 \item Recall that  $( A_{\epsilon_N}, d_{A_{\epsilon_N}})=(\hat A, d_{\hat A})\otimes _{\mathbb Z}\mathbb Z/p^{\epsilon_N}\mathbb Z$  and $\mbox{red}_{p^{\epsilon_N}}: \hat A\to  A_{\epsilon_N}$ the reduction mod$_{p^{\epsilon_N}}$. 
 
\item  Observe that for every $i$, $1\leq i\leq N$, $\mbox{red}_{p^{\epsilon_N}}(\hat \zeta_i)=0$.
 
\item If $\hat y\in \hat A$, we note $\bar y= \mbox{red}_{p^{\epsilon_N}}(\hat y)$. Thus $\bar a=\mbox{red}_{p^{\epsilon_N}}(\hat a)$ is a cocycle in  $( A_{\epsilon_N}, d_{ A_{\epsilon_N}})$ and $\sum_{i=1}^N\bar a_i\bar a_{N+1-i}$ is a cocycle in $( A_{\epsilon_N}, d_{ A_{\epsilon_N}})$.
 
 \item We prove that $\beta_{\epsilon_N}(\mbox{cls}(\sum_{i=1}^N\bar a_i\bar a_{N+1-i}))=0$ where $\beta_{\epsilon_N}: H^q( A_{\epsilon_N}, d_{{\epsilon_N}})\to H^
  {q+1}( A_{\epsilon_N}, d_{{\epsilon_N}})$ is the Bockstein homomorphism associated to the short exact sequence $0\to\mathbb Z/p^{\epsilon_{N}}\mathbb Z\to \mathbb Z/p^{2\epsilon_{N}}\mathbb Z\to \mathbb Z/p^{\epsilon_{N}}\mathbb Z\to 0.$

   For this purpose,  we define $ 1\leq n\leq N$ in $A_{\epsilon_N}$,
  $$\bar X_1=0, \quad \bar X_{n+1}=\sum_{i=1}^n\bar  a_i\smile_1(\bar z_{n-i}- \bar X_{n-i}),$$
   
  and prove by induction on $n$ that $$d_{A_{\epsilon_N}}\bar X_n=\sum_{i=1}^{n-1}(\bar a_i\bar z_{n-i}-\bar z_i\bar a_{n-i})=d_{A_{\epsilon_N}}\bar z_n.$$ 
  
  Indeed, since $d_{\hat A} \hat a=\hat \zeta_1$ with $\hat \zeta_1\in ker\hspace{1mm}\mbox{red}_p$ and , $d_{\hat A}\hat \zeta_1=0$. By construction, if $\hat \zeta_1\neq 0$, then $\epsilon_1\geq \epsilon_N$ and $\hat \zeta_1=p^{\epsilon_N}\hat z_1$. Since $\hat A$ is torsion free and  $d_{\hat A}\hat z_1=0$, then $d_{A_{\epsilon_N}} \bar z_1=0 =d_{A_{\epsilon_N}} \bar X_1$. 
  
  Suppose proved that $d_{A_{\epsilon_N}}\bar X_k=\sum_{i=1}^{k-1}(\bar a_i\bar z_{k-i}-\bar z_i\bar a_{k-i})=d_{A_{\epsilon_N}}\bar z_k$ for $1\leq k\leq n$ and suppose that $n+1\leq N$. Thus 
  $$\begin{array}{ll}d_{A_{\epsilon_N}}\bar X_{n+1}&=\sum_{i=1}^nd_{A_{\epsilon_N}}(\bar  a_i\smile_1(\bar z_{n+1-i}- \bar X_{n+1-i}))\\
 &=\sum_{i=1}^n(\bar a_i( \bar z_{n+1}-\bar X_{n+1-i})-(\bar z _{n+1}-\bar X_{n+1-i})\bar a_i)\\
& -\sum_{i=1}^n\sum_{j=1}^{i-1}(\bar a_j\bar a_{i-j}+\bar \zeta_i)\smile_1(\bar z_{n+1-i}-\bar X_{n+1-i}))\\
&+\sum_{i=1}^n\bar a_i\smile_1(d_{\hat A}(\bar z_{n+1-i}-\bar X_{n+1-i})\\
&=\sum_{i=1}^n(\bar a_i\bar z_{n+1-i}-\bar z_{n+1-i}\bar a_i)- \sum_{i=1}^n(\bar a_i\bar X_{n+1-i}-\bar X_{n+1-i}\bar a_i)\\
&+\sum_{i=1}^n\sum_{j=1}^{i-1}\bar a_j(\bar a_{i-j}\smile_1(\bar z_{n+1-i}-\bar X_{n+1-i}))\\
&-\sum_{i=1}^n\sum_{j=1}^{i-1}(\bar a_{i-j}\smile_1(\bar z_{n+1-i}-\bar X_{n+1-i}))\bar a_j \\
&\mbox{since}\quad d_{A_{\epsilon_N}}(\bar z_{n+1-i}-\bar X_{n+1-i})=0\quad \mbox{by induction hypothesis}\\
& \mbox{and} \quad \bar \zeta_i=0\quad \mbox{for}\quad 1\leq i\leq n+1.\\
  
 \end{array}$$ By a direct checking, using the definition of $\bar X_k$, we obtain that:
 $$\begin{array}{ll}
 \sum_{i=1}^n\sum_{j=1}^{i-1}\bar a_j(\bar a_{i-j}\smile_1(\bar z_{n+1-i}-\bar X_{n+1-i}))&=
 \sum_{i=1}^n\bar a_i\bar X_{n+1-i},\\
 \sum_{i=1}^n\sum_{j=1}^{i-1}(\bar a_{i-j}\smile_1(\bar z_{n+1-i}-\bar X_{n+1-i}))\bar a_j&=\sum_{i=1}^n\bar X_{n+1-i}\bar a_i
\end{array}$$  and  hence, $- \sum_{i=1}^n(\bar a_i\bar X_{n+1-i}-\bar X_{n+1-i}\bar a_i)\\
+\sum_{i=1}^n\sum_{j=1}^{i-1}\bar a_j(\bar a_{i-j}\smile_1(\bar z_{n+1-i}-\bar X_{n+1-i}))\\
-\sum_{i=1}^n\sum_{j=1}^{i-1}(\bar a_{i-j}\smile_1(\bar z_{n+1-i}-\bar X_{n+1-i}))\bar a_j=0$.

 Therefore $d_{A_{\epsilon_N}}\bar X_{N+1}= \sum_{i=1}^N(\bar a_i\bar z_{n+1-i}-\bar z_{n+1-i}\bar a_i)$ and 
 $$\beta_{\epsilon_N}(\mbox{cls}(\sum_{i=1}^N\bar a_i\bar a_{N+1-i}))=\mbox{cls}(\sum_{i=1}^N(\bar a_i\bar z_{n+1-i}-\bar z_{n+1-i}\bar a_i))=\mbox{cls}(d_{A_{\epsilon_N}}\bar X_{N+1})=0.$$

  Thus by Lemma 2.3.8, there exists $\hat \zeta_{N+1}\in \hat A$ such that

 $d_{\hat A}(\sum_{i=1}^{N}\hat a_i\hat a_{N+1-i}+\hat \zeta_{N+1})=0$.
 \end{enumerate}
 \item We prove that there there exist $\hat a_{N+1}\in \hat A$ and an integer $\epsilon_{N+1}\geq 1$ such that $d_{\hat A}\hat a_{N+1}=p^{\epsilon_{N+1}}(\sum_{i=1}^N\hat a_i\hat a_{N+1-i}+\hat \zeta_{N+1}).$ That is , $\mbox{cls}(\sum_{i=1}^N\hat a_i\hat a_{N+1-i}+\hat \zeta_{N+1})\in \mbox{tor}H^*(\hat A, d_{\hat A})$. 
 \begin{enumerate}
 \item If $\hat \zeta_{N+1}=0$, this is a part of Theorem 15 of \cite{Kr2}. So, there exist $\hat a_{N+1}\in \hat A$ and an integer $\epsilon_{N+1}\geq 1$ such that $d_{\hat A}\hat a_{N+1}=p^{\epsilon_{N+1}}\sum_{i=1}^N\hat a_i\hat a_{N+1-i}$.
 \item Suppose that $\hat \zeta_{N+1}\neq 0$. 
 
  Remark that, from (c)-iii above, $(\bar a_n)_{1\leq n\leq N}$ is a \emph{Kraines sequence} in $(\hat A/p^{\epsilon_N}\hat A, \bar d_{\hat A})$ and by Lemma 2.3.10-2,
  
  $\mbox{cls}(\sum_{i=1}^{N}(\bar a_i\bar a_{N+1-i})\in \mbox{tor}H^*(\hat A/p^{\epsilon_N}\hat A, \bar d_{\hat A})$. By Lemma  
  2.3.10-1, $\mbox{cls}(\sum_{i=1}^{N}(\hat a_i\hat a_{N+1-i})\in \mbox{tor}H^*(\hat A,  d_{\hat A})$.

  Thus,  there exist $\hat a_{N+1}\in \hat A$ and an integer $\epsilon_{N+1}\geq 1$ such that

    $d_{\hat A}\hat a_{N+1}=p^{\epsilon_{N+1}}(\sum_{i=1}^N\hat a_i\hat a_{N+1-i}+\hat \zeta_{N+1})$. 
 \end{enumerate}

\item From (d) above,   
 $a_{N+1}=\mbox{red}_p(\hat a_{N+1})$ is a cocycle in $ A$ and 

$\mbox{cls}(a_{N+1})= s\mbox{cls}(\sum_{i=1}^Na_ia_{N+1-i})$.   
   
\item If there exists an infinite \emph{Kraines sequence } starting at $a_{N+1}$, we set $\theta=a_{N+1}$. If not, we restart the procedure with $a_{N+1}$ until we obtain $\theta$ and  an infinite \emph{Kraines sequence }  $(\theta_n)_{n\geq 1}$ starting at $\theta$, since the cohomology $H^*(A, d_A)$ is finite.

\end{enumerate}\end{enumerate}\end{proof}

\begin{Lem}{\sl  If $\sigma(\alpha)\neq 0$, then $\sigma(\mbox{cls}(\theta))\neq 0$.
   }\end{Lem}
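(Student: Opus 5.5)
We follow the construction in the proof of Lemma 3.2.1 and treat its two cases separately. In the first case, $\theta=a$ is a representative of $\alpha$, so $\sigma(\mbox{cls}(\theta))=\sigma(\alpha)\neq 0$ by hypothesis.

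In the second case $\theta$ arises after finitely many restarts, so we argue by induction on the number of restarts. It suffices to treat one step: assume $a=a_1$ satisfies $\sigma(\mbox{cls}(a))\neq 0$ and show $\sigma(\mbox{cls}(a_{N+1}))\neq 0$. By step (e) of that proof,
$$\mbox{cls}(a_{N+1})=s\,\mbox{cls}\Big(\sum_{i=1}^N a_ia_{N+1-i}\Big)$$
in the sense of the universal coefficient decomposition. The key identity comes from $d_{\hat A}\hat a_{N+1}=p^{\epsilon_{N+1}}\hat w$, where $\hat w=\sum_{i=1}^N\hat a_i\hat a_{N+1-i}+\hat\zeta_{N+1}$. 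This gives $\hat\delta[\hat a_{N+1}]=p^{\epsilon_{N+1}}[\hat w]$ in $\bB\hat A$, exactly as in the proof of Lemma 2.3.7. Lemma 2.3.7 then shows that $\sigma(\mbox{cls}(a_{N+1}))\neq 0$ as soon as $\sigma(\mbox{cls}(w))\neq 0$, where $w=\mbox{red}_p(\hat w)$.

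\textbf{Main obstacle.} We must establish $\sigma(\mbox{cls}(w))\neq 0$, and $w=\sum a_ia_{N+1-i}$ is decomposable, so its naive suspension vanishes. Indeed, mod $p$ we have $[w]=\delta''\theta_{N+1,2}$, which is cohomologous to $-\delta'\theta_{N+1,2}$. The truncated Kraines sequence instead produces the element
$$\theta(N+1)'=\sum_{\ell\geq 2}\theta_{N+1,\ell}+[a_{N+1}]$$
in $\bB A$. Its coboundary is computed as in Lemma 3.1.2-1: every term cancels except possibly $\delta'[a_{N+1}]$ against $\delta''\theta_{N+1,2}$, and these now agree because $a_{N+1}$ is a genuine cocycle with $\mbox{red}_p$ of the defect vanishing. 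Its reduced coproduct is $\sum_{n_1+n_2=N+1}\theta(n_1)\otimes\theta(n_2)$ by Lemma 3.1.2-2. Applying Lemma 3.1.2-3 to the truncated sequence $a_1,\dots,a_N$ (the induction there uses only indices $\leq N$) gives that this coproduct is nonzero in cohomology. Hence $\mbox{cls}(\theta(N+1)')\neq 0$, and this class is not primitive.

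To conclude, I would first try the following. Compare $\theta(N+1)'$ with the Bockstein-type boundary coming from $\hat a_{N+1}$, namely the integral lift $\hat\delta(\cdots)=p^{\epsilon_{N+1}}(\cdots)$. From this comparison we want to read off that $[a_{N+1}]$ represents a nonzero class modulo decomposables of $H^*\bB A$. By Proposition 2.4.1 and Lemma 2.4.4, this amounts to saying that the generator $v\in V$ corresponding to $a_{N+1}$ is a nonzero indecomposable cocycle in the minimal model. Checking this via the identification $V\cong s^{-1}H^+\bB A$ is the delicate point, and it is where I expect the argument to need the most care.
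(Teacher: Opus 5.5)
\textbf{Verdict: the proposal does not prove the lemma.} Its substitute cocycle is not a cocycle, and the step that carries the whole content is left as a plan.

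\textbf{What works.} Your reduction to a single restart is fine; the paper's proof also begins with ``it is enough to prove that $\sigma(\mbox{cls}(a_{N+1}))\neq 0$''. You are also right that the route through Lemma 2.3.7 is dead, and in fact exactly so. For $\ell\geq 2$ the chains $\theta_{N+1,\ell}$ involve only $a_1,\dots,a_N$. The telescoping in the proof of Lemma 3.1.2-1 then gives $\delta(\sum_{\ell\geq 2}\theta_{N+1,\ell})=\delta''\theta_{N+1,2}=[w]$, so $\sigma(\mbox{cls}(w))=0$ always.

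\textbf{Where it breaks.} Your substitute $\theta(N+1)'$ is not a cocycle. The same telescoping gives $\delta\,\theta(N+1)'=[w]+\delta'[a_{N+1}]=[w]-[d_Aa_{N+1}]=[w]$, because $a_{N+1}=\mbox{red}_p(\hat a_{N+1})$ is a mod $p$ cocycle. Moreover $[w]\neq 0$, since $\mbox{cls}(w)\neq 0$ by the maximality of $N$. The cancellation you invoke would need $d_Aa_{N+1}=w$, i.e.\ that $a_{N+1}$ prolongs the Kraines sequence, which is exactly what fails.

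Even a genuine cocycle of the form $[a_{N+1}]+(\mbox{terms of bar length }\geq 2)$ with non-primitive class would say nothing about $\sigma(\mbox{cls}(a_{N+1}))=\mbox{cls}([a_{N+1}])$. The reduced coproduct only sees the terms of bar length $\geq 2$, and these are not cocycles by themselves, so $[a_{N+1}]$ cannot be split off.

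\textbf{The missing step and the paper's route.} The paper carries out your final plan in a minimal model rather than in $\bB A$:
\begin{enumerate}
\item By Corollary 2.4.3 and Remark 2.4.2 it writes $a=\varphi_A(v)$, with $v$ a cocycle generator of $(TV,d_V)=(T\hat V,d_{\hat V})\otimes\mathbb F_p$ (diagram (11)).
\item It lifts the truncated Kraines sequence to $(v_n)$ in $TV$ and to $(\hat v_n)$ in $T\hat V$.
\item It lifts $\hat a_{N+1}$ to $\hat v_{N+1}$ with $d_{\hat V}\hat v_{N+1}=p^{\epsilon_{N+1}}(\cdots)$.
\item It asserts, by word length ($d_V(V)\subset T^{\geq 2}V$, $d_{\hat V}(\hat V)\subset T^{\geq 2}\hat V\oplus p\hat V$), that $\hat v_{N+1}\in\hat V$.
\item It concludes by Lemma 2.4.4 applied to the cocycle $v_{N+1}=\mbox{red}_p(\hat v_{N+1})\in V$.
\end{enumerate}

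\textbf{A warning about that step.} The word-length assertion is not actually proved in the paper, and the claim appears to be false in general.
\begin{itemize}
\item When $N+1=p$, Kraines' theorem gives $\mbox{cls}(w)=\pm\beta_1P^m\alpha$. Hence $\mbox{cls}(a_{N+1})\in\mathbb F_pP^m\alpha+\ker\beta_1$, whereas $\sigma(P^m\alpha)=(\sigma\alpha)^p$.
\item Take $p=3$ and $\alpha=\iota_3$. Let $X$ be a $10$-skeleton of the homotopy fibre of the map $K(\mathbb Z,3)\times K(\mathbb Z/3,3)\times K(\mathbb Z/3,4)\to K(\mathbb Z/3,7)$ classifying $P^1\iota_3-\iota'_3\iota_4$.
\item Then $\sigma(\alpha)\neq 0$ and $\beta_1P^1\alpha\neq 0$. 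But $P^1\alpha=\iota'_3\iota_4$, and $\ker\beta_1$ in degree $7$ is spanned by $\iota_3\,\beta_1\iota'_3$.
\item So every admissible $a_3$ has decomposable class and $\sigma(\mbox{cls}(a_3))=0$. For dimensional reasons $a_3$ starts an infinite Kraines sequence, so $\theta=a_3$.
\end{itemize}
If this is right, the gap in your proposal cannot be closed without additional hypotheses.
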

  \begin{proof} It is enough to prove that $\sigma(\mbox{cls}(a_{N+1}))\neq 0$.

 By assumption, $\sigma(\alpha)\neq 0$ and  
  by Corollary 2.4.3, there exists a minimal model $\varphi'_A: (TV', d_{V'})\to (A, d_A)$ where $V'=\mathbb F_pv'\oplus W'$ such that $d_{V'}v'=0$  and $\varphi'_A(v') =a$  where $a$ represents $\alpha$.

 Let $\varphi_{\hat A}:(T\hat V, d_{\hat V})\to (\hat A, d_{\hat A})$ be  a minimal model. The isomorphism (10)  yields a minimal model $\varphi_A:(T V, d_{ V})\to (A, d_A)$.
 
By Remark 2.4.2, we have an isomorphism of DG algebras 

$\varphi: (TV', d_{V'})\lra (TV, d_{V})$ such that $\varphi_A\circ \varphi\simeq_{\bf DA}\varphi'_A$. 
 
 In particular $\varphi_{|V'}:V'\to V$ is an isomorphism. 
 
 Set $v=\varphi_{|V'}(v')\in V$.
 \begin{enumerate}
\item  There exists a finite \emph{Kraines sequence } $(v_n)_{1\leq n\leq N}$ in $TV$ starting at $v$ and such that $\varphi_{\mathcal A}(v_n)= a_n$.

 Indeed, $\varphi_{\mathcal A}(v)= a$ and $v^2$ is a cocycle in $(TV, d_V)$ such that $\varphi_{\mathcal A}(v^2)=a^2=d_Aa_2$. Then there exist $v_2\in TV$ such that $d_Vv_2=v^2$ since $\varphi_A$ is a quasi-isomorphism. Continuing so, we obtain the claimed \emph{Kraines sequence}. Furthermore, $\sum_{i=1}^Nv_iv_{N+1-i}$ is a cocycle in $(TV, d_V)$ and
 
  $\mbox{cls}(\sum_{i=1}^Nv_iv_{N+1-i})=(H^*\varphi_{\mathcal A})^{-1}( \mbox{cls}(\sum_{i=1}^Na_ia_{N+1-i}))\neq 0$.
\item For $1\leq n\leq N$, there exist $\hat v_n\in T\hat V$ and $\hat z_n\in ker\hspace{1mm}\mbox{red}_p$ such that
\begin{enumerate}
\item $\mbox{red}_p(\hat v_n)=v_n$ and $d_{\hat V}\hat v_n =\sum_{i=1}^{n-1}\hat v_i\hat v_{n-i}+\hat z_n$, 
\item  $d_{\hat V}\sum_{i=1}^{N}\hat v_i\hat v_{N+1-i}=\sum_{i=1}^N(\hat z_i\hat v_{N+1-i}-\hat v_i\hat z_{N+1-i})$.\end{enumerate}
  
 \item In point 2-(c) of the proof of Lemma 3.2.1,  we have proved the existence of  $\hat \zeta_{N+1}\in \mbox{ker}\hspace{1mm}\mbox{red}_p$ such that $\sum_{i=1}^N\hat a_i\hat a_{N+1-i} +\hat \zeta_{N+1}$ is a cocycle with non trivial cohomology class. 
 
 Since $\varphi_{\hat{\mathcal A}}$ is a quasi-isomorphism and diagram (11) is commutative, there exists $\hat \chi_{N+1}\in \ker(\varphi_{\mathcal A}\circ\rho_V)$ such that $\sum_{i=1}^N\hat v_i\hat v_{N+1-i} +\hat \chi_{N+1}$ is a cocycle in $(T\hat V, d_{\hat V})$ and $\varphi_{\hat{\mathcal A}}(\sum_{i=1}^N\hat v_i\hat v_{N+1-i} +\hat \chi_{N+1})= \sum_{i=1}^N\hat a_i\hat a_{N+1-i} +\hat \zeta_{N+1}$.
 
\item Since $v=v_1\in V$, $d_V$ strictly increases words length and $d_{\hat V}$ strictly increases words length  modulo $p\hat V$, an easy induction on $n$ shows that $v_n\in V$ and $\hat v_n\in \hat V$.
\item By Lemma 3.2.1-2-(d), there exist $\hat a_{N+1}\in \hat{\mathcal A}$ and an integer $\epsilon_{N+1}\geq 1$ such that $d_{\hat A}\hat a_{N+1}=p^{\epsilon_{N+1}}\sum_{i=1}^N\hat a_i\hat a_{N+1-i} +\hat \zeta_{N+1}$. As $\varphi_{\hat{\mathcal A}}$ is a quasi-isomophism , there exists $\hat v_{N+1}\in T\hat V$ such that $d_{\hat A}\hat v_{N+1}=p^{\epsilon_{N+1}}\sum_{i=1}^N\hat v_i\hat v_{N+1-i} +\hat \chi_{N+1}$ and $\varphi_{\hat{\mathcal A}}(\hat v_{N+1})=\hat a_{N+1}$.

Since $d_{\hat V}$ strictly increases words length  modulo $p\hat V$, $\hat v_{N+1}\in \hat V$ and hence $v_{N+1}=\mbox{red}_p(\hat v_{N+1})\in  V$. But $v_{N+1}$ is a cocycle in $(TV, d_V)$ and $\varphi_{\mathcal A}(v_{N+1})=a_{N+1}.$ We apply Lemma 2.4.4 and deduce that  $\sigma(\mbox{cls}(a_{N+1}))\neq 0$. 
 \end{enumerate}
\end{proof}

\subsection{End of proof}

  \begin{Cor}{\sl  Suppose that $ (A, d_A)$ is the DG algebra of normalized  singular cochains on a simply connected finite  CW complex $X$ with coefficients in $\mathbb F_p$.

If   $\beta\in H^*_1(A, d_A)\smallsetminus\{0\}$ such that $\beta=s\alpha$ such that $\sigma(\alpha)\neq 0$, then  there exist two infinite sequences $(\chi_n)_{n\geq 1}$ and $(\psi_n)_{n\geq 1}$ in $H^*\bB (A, d_A)\smallsetminus\{0\}$ such that 
 
   for every $n\geq 1$, $\chi_n\in H^{odd}\bB (A, d_A)$ and $\psi_n\in H^{even}\bB (A, d_A)$.}\end{Cor}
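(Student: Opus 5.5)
The corollary follows by combining Proposition 3.1.6 with the two lemmas of the second step, so the plan is to check their hypotheses in order.

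\emph{Step 1: produce an odd-degree class $\alpha'$ with $\sigma(\alpha')\neq 0$.} We need a class of odd degree at least $3$, since Lemma 3.2.1 requires $\alpha\in H^{2m+1}(A,d_A)$ with $m\geq 1$. Split according to the parity of $\deg\alpha$, as in the proof of Proposition 3.1.6.
\begin{enumerate}
\item If $\deg\alpha$ is odd, take $\alpha'=\alpha$.
\item If $\deg\alpha$ is even, then $\beta=s\alpha$ has odd degree. By Lemma 2.3.7, $\sigma(\beta)\neq 0$, so take $\alpha'=\beta$.
\end{enumerate}
In both cases $\alpha'\neq 0$. Since $H^1(A,d_A)=0$, its degree is at least $3$, so $\alpha'\in H^{2m+1}(A,d_A)$ with $m\geq 1$.

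\emph{Step 2: obtain the Kraines sequence.} Apply Lemma 3.2.1 to $\alpha'$. This gives a cocycle $\theta$ and an infinite Kraines sequence $(\theta_n)_{n\geq1}$ in $(A,d_A)$ starting at $\theta$. By Lemma 3.2.2, applied with $\alpha'$ in place of $\alpha$, the hypothesis $\sigma(\alpha')\neq 0$ gives $\sigma(\mathrm{cls}(\theta))\neq 0$.

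This transfer is the delicate point. Lemma 3.2.2 is stated for the $\alpha$ from which the construction in Lemma 3.2.1 started. That construction may restart several times, replacing $a$ by $a_{N+1}$. So we must argue by induction along these restarts that non-vanishing of $\sigma$ is preserved at each stage. Each restart is exactly the situation of Lemma 3.2.2, with the new starting class now playing the role of $\alpha$. Finiteness of $H^*(A,d_A)$ ensures the process stops.

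\emph{Step 3: conclude.} Now all hypotheses of Proposition 3.1.6 hold:
\begin{itemize}
\item $\beta=s\alpha\neq 0$ with $\sigma(\alpha)\neq 0$;
\item there is an infinite Kraines sequence $(\theta_n)_{n\geq1}$ starting at $\theta$ with $\sigma(\mathrm{cls}(\theta))\neq 0$.
\end{itemize}
Proposition 3.1.6 then yields the two sequences.

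Concretely, the even-degree classes are $\psi_n=\mathrm{cls}(\theta(n))$; Lemma 3.1.2 shows they are nonzero. The odd-degree classes are $\chi_n=\mathrm{cls}(b(n))$. Here $(b_n)$ is built by Lemma 3.1.4 from an even-degree cocycle $b$ with $\sigma(\mathrm{cls}(b))\neq 0$, namely a representative of $\beta$ or of $\alpha$, whichever has even degree. Lemma 3.1.5 then shows $\chi_n\neq 0$.

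The main obstacle I expect is the induction along restarts in Step 2. Everything else is direct bookkeeping of the earlier results.
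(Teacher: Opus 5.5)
Your proposal is correct and is essentially the paper's own argument. You take whichever of $\alpha,\beta$ has odd degree, use Lemma 3.2.1 and Lemma 3.2.2 to get an infinite Kraines sequence starting at some $\theta$ with $\sigma(\mbox{cls}(\theta))\neq 0$, and conclude with Proposition 3.1.6. You are more explicit than the paper's three-line proof on two points: it cites only $\sigma(\alpha)\neq 0$ even when $\beta$ is the odd class (your appeal to Lemma 2.3.7 covers this), and it leaves the induction over restarts implicit in the phrase ``it is enough to prove that $\sigma(\mbox{cls}(a_{N+1}))\neq 0$'' in the proof of Lemma 3.2.2.
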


  \begin{proof}
  
  If $\beta\in H^*_1(A, d_A)\smallsetminus\{0\}$, then  $\beta\in H^{odd}(A; d_A)$ or $\alpha\in H^{odd}(A; d_A)$.
 
By Lemma 3.2.1, there exists a cocyle $\theta\in A$ and an infinite \emph{Kraines sequence} starting at $\theta$. Since $\sigma(\alpha)\neq 0$, by Lemma 3.2.2, $\sigma(\theta)\neq 0$.

 We apply Proposition 3.1.7.
  \end{proof}

\section{Proof of Theorem 2}

\subsection{Loop space}
 We recall the following result.
 \begin{Pro}(\cite{ACC}, \cite{BT}){\sl There exists a natural isomorphism  $$
H^*(\Omega X;\mathbb F_p) \cong H^*\bB (A, d_A)\,,$$
as graded Hopf algebras.

  }\end{Pro}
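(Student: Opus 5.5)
The plan is to obtain the isomorphism by composing two standard comparisons. The first is Adams' cobar theorem in its dual cochain form, as developed in \cite{ACC}, \cite{BT}. The second is the quasi-isomorphism invariance of $\bB$ already used in the proof of Lemma 2.4.4. Concretely, I would work with the cubical (or simplicial) normalized singular chains $C_*(\Omega X;\mathbb F_p)$, where $\Omega X$ is the Moore loop space, so that $C_*(\Omega X;\mathbb F_p)$ is a DG Hopf algebra under the Pontryagin product and the Alexander--Whitney diagonal. Adams' theorem gives a natural quasi-isomorphism of DG algebras
$$\bar\Omega C_*(X;\mathbb F_p)\lra C_*(\Omega X;\mathbb F_p),$$
valid because $X$ is simply connected; here one uses the $1$-reduced singular chains, which are quasi-isomorphic to the full ones. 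By \cite{BT} this map is moreover compatible with the coproducts, once $\bar\Omega C_*(X)$ is given the diagonal induced by the homotopy Gerstenhaber (cup-$1$ and higher) structure on $C_*(X)$. I will take this as the input.

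The second step is to dualize. Since $X$ is a finite CW complex and simply connected, $H_*(X;\mathbb F_p)$ and $H_*(\Omega X;\mathbb F_p)$ are of finite type. Taking $\mathbb F_p$-linear duals of $\bar\Omega C_*(X)$ then gives $\bB(C_*(X)^\vee)=\bB(A,d_A)$ degreewise up to completion issues. These disappear because of the finiteness and the $1$-reducedness: in each degree only finitely many bar lengths occur. Hence $H^*\bB(A,d_A)\cong \mathrm{Hom}(H_*(\Omega X),\mathbb F_p)=H^*(\Omega X;\mathbb F_p)$. Under this duality the Pontryagin product dualizes to the deconcatenation coproduct $\Delta$ of formula (4), and the coproduct on the cobar side, coming from cup-$1$, dualizes to the shuffle-type product on $\bB(A,d_A)$ induced by the cup-one product of Lemma 2.3.3. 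Compatibility of these two structures is exactly the Hopf algebra axiom, inherited from the DG Hopf structure of $C_*(\Omega X)$.

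Naturality in $X$ will follow from the naturality of every map involved: the Adams map, the normalized cochain functor, and $\bB$. Replacing $(A,d_A)$ by a quasi-isomorphic model does not change the answer, because $\bB$ preserves quasi-isomorphisms between connected DG algebras; this is the argument used in Lemma 2.4.4. Only the graded coalgebra structure, namely the deconcatenation coproduct, is needed later in the paper, for instance in Remark 3.1.8. That part is the cleaner statement and requires only Adams' theorem plus dualization.

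The main obstacle I expect is the multiplicative part. To identify the product on $H^*\bB(A,d_A)$ one must put a natural DG algebra structure on $\bB(A,d_A)$ from the $E_\infty$/cup-$i$ structure of $A$ and check that Adams' map is a DG coalgebra map with respect to it. I would simply cite \cite{BT} (Baues' geometric diagonal on the cobar construction) for this rather than reprove it. The other technical point is the finite-type dualization, and it is routine.
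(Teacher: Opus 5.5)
The paper gives no argument of its own here (the proposition is quoted from \cite{ACC} and \cite{BT}). Your outline (Adams' equivalence, then dualization, with the multiplicative part taken from the literature) is a sensible reconstruction, but the dualization step fails as you justify it.

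Singular chains are not of finite type. $C_n(X;\mathbb F_p)$ is in general infinite dimensional for $n\geq 2$, even on the Eilenberg $1$-reduced subcomplex $S_1X$. For infinite-dimensional $V,W$, the space $V^\vee\otimes W^\vee$ is a proper subspace of $(V\otimes W)^\vee$. So $\bB(A,d_A)$ is \emph{not} degreewise the dual of $\bar\Omega C_*(X)$. This is not a completion problem that bounded bar length removes, and finiteness of $H_*(X)$ says nothing at chain level.

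What you need is a comparison argument. The natural injective chain map $\bB(C^*(S_1X))\to(\bar\Omega C_*(S_1X))^\vee$ is a quasi-isomorphism. After filtering both sides by bar length, it induces on the associated graded homology the canonical map $(s\bar H^*(X))^{\otimes k}\to\bigl((s^{-1}\bar H_*(X))^{\otimes k}\bigr)^\vee$, by K\"unneth and exactness of $(-)^\vee$ over a field. This map is an isomorphism precisely because $\bar H_*(X;\mathbb F_p)$ is of finite type and concentrated in degrees $\geq 2$, and the filtrations are finite in each degree.

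That finiteness holds only for cochains on $S_1X$. The paper's $A$ is the full normalized cochain algebra, with $\bar A^0\neq 0$ and $A^1\neq 0$, so $\bB(A,d_A)$ contains tensors of every length in a fixed degree. You must therefore go through the restriction quasi-isomorphism $C^*(X)\to C^*(S_1X)$, which uses simple connectivity. You also need that $\bB$ preserves quasi-isomorphisms of augmented DG algebras over a field; this requires no connectivity, since the bar-length filtration is exhaustive and starts at $0$.

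Second, your claim that only the coalgebra structure is used later is wrong, so the multiplicative half cannot be set aside.
\begin{itemize}
\item Proposition 4.1.2 forms the products $\zeta_{r,s}=\chi_r\psi_s$ and needs them to span a copy of $\mathbb F_p[u,v]$. That is where the product, and its compatibility with the coproduct formulas for $\psi_n,\chi_n$ at the end of Section 3.1, is used.
\item In Section 4.2, $H^*\Phi$ must send $\mathrm{cls}(z_{r,0}\star z_{0,s})$ to $\chi_r\psi_s$. This pins the product on $\bB(A,d_A)$ down as the one induced by the shc structure of $A$.
\end{itemize}
That product is what \cite{BT} supplies in this paper (it is the source of $\star$); \cite{BT} is not a reference for Baues' geometric diagonal.

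If you build the product from $\smile_1$ instead, the cup-one product of Lemma 2.3.3, subject only to the identities of Definition 2.3.2, does not yield an associative product on $\bB(A,d_A)$. You need the whole family of higher (brace) operations dual to those underlying Baues' diagonal. Even then, you would still have to identify the resulting Hopf structure on $H^*\bB(A,d_A)$ with the shc one that Section 4.2 uses.
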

  
  \begin{Pro}{\sl If $H^*_1(X; \mathbb F_p)\neq \{0\}$, the sequence $(dim_{\mathbb F_p}H^n(\Omega X; \mathbb F_p))_{n\geq 1}$ grows unbounded.
  }\end{Pro}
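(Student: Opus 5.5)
Via Proposition 4.1.1, $H^*(\Omega X;\mathbb F_p)\cong H^*\bB(A,d_A)$ as graded Hopf algebras. So it suffices to show that $\dim H^n\bB(A,d_A)$ is unbounded in $n$. The input is Corollary 3.3.1 together with Remark 3.1.8. Since $H^*_1(X;\mathbb F_p)\neq\{0\}$, pick $\beta=s\alpha\in H^*_1(A,d_A)\smallsetminus\{0\}$. We must also arrange $\sigma(\alpha)\neq 0$. By the universal coefficient decomposition, $\alpha$ is the reduction of a nonzero $p$-torsion integral class. I would first try to choose $\alpha$ indecomposable, taking $\alpha$ of minimal degree among such classes. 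Then $\sigma(\alpha)\neq0$ would follow from the minimal-model description $V\cong s^{-1}H^+\bB(A,d_A)$ (Proposition 2.4.1, Lemma 2.4.4). This reduction is the first point needing care. If it fails, one works directly with whatever $\alpha$ has $\sigma(\alpha)\neq 0$, which exists as soon as $QH^*(A)$ meets the torsion-reduction part.

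Corollary 3.3.1 then yields nonzero classes $\psi_n\in H^{even}\bB(A,d_A)$ and $\chi_n\in H^{odd}\bB(A,d_A)$. Their coproducts are
$$\Delta\psi_n=\sum_{n_1+n_2=n}\psi_{n_1}\otimes\psi_{n_2},\qquad \Delta\chi_n=\sum_{n_1+n_2=n}(\chi_{n_1}\otimes\psi_{n_2}+\psi_{n_1}\otimes\chi_{n_2}).$$
Put $\deg\psi_1=2d$ and $\deg\chi_1=2e+1$. By construction, $\psi_n=\mathrm{cls}(\theta(n))$ has degree $n\cdot 2d$, and $\chi_n$ has degree $2e+1+(n-1)2d$. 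Since $H^*\bB(A,d_A)$ is a graded-commutative Hopf algebra over $\mathbb F_p$, the coproduct formulas say that $(\psi_n)$ behaves like a divided power sequence. The key claim is then that the monomials
$$\psi_{i_1}\cdots\psi_{i_r}\,\chi_{j}$$
in a suitable restricted range are linearly independent. The cleanest route is to dualize and pass to the homology Hopf algebra $H_*(\Omega X)$. There the dual classes $x$ of $\psi_1$ and $y$ of $\chi_1$ are primitive of degrees $2d$ and $2e+1$. The formulas above force $x^n\neq0$ for all $n$, since the pairing $\langle x^n,\psi_n\rangle$ is computed by the iterated reduced coproduct of $\psi_n$, which equals $\psi_1^{\otimes n}$. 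They likewise force $x^ay x^b\neq 0$, paired against $\chi_{a+b+1}$ via the iterated coproduct of $\chi_n$, whose components are $\psi_1^{\otimes a}\otimes\chi_1\otimes\psi_1^{\otimes b}$.

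The counting step is this: for fixed $N$, the elements $x^a y x^b$ with $a+b=N$ all lie in degree $2dN+2e+1$. I would show that at least $N+1$ of them, or at least a number growing with $N$, are linearly independent. To do so I would test them against products $\psi_{a}\chi_{1}\psi_b$ in $H^*\bB$, or against the dual basis built from iterated coproducts. The main obstacle is to prove this independence. The paper's coproduct formulas only record the $\chi_{n_1}\otimes\psi_{n_2}$ structure, so independence of the different positions of $y$ has to be extracted from the noncocommutativity of $\bar\Delta^{(N)}\chi_{N+1}$. I would compute $\bar\Delta^{(N)}$ of products $\psi_a\chi_1\psi_b$ using multiplicativity of $\Delta$, and show that the resulting pairing matrix with the $x^ay x^b$ is triangular with nonzero diagonal. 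That gives $\dim H^{2dN+2e+1}(\Omega X;\mathbb F_p)\geq N+1$, and hence unbounded Betti numbers. As a fallback, if triangularity fails for small $p$, I would restrict to $a,b$ that are multiples of $p$, which avoids the binomial vanishing, and still get linear growth.
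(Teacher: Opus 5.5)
Your skeleton matches the paper's: Proposition 4.1.1, then Corollary 3.3.1, then a counting argument. The paper counts via cohomology products $\chi_r\psi_s$; you count via homology words $x^ayx^b$. The counting step fails, and the failure is structural, not a small-$p$ effect.

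\textbf{The proposed test classes cannot separate the words.} Fix $a'+b'=N$ and expand $\Delta^{(N)}(\psi_{a'}\chi_1\psi_{b'})$ multiplicatively; all Koszul signs are $+$ because only $\chi_1$ is odd. Its component in $(H^{2d})^{\otimes a}\otimes H^{2e+1}\otimes(H^{2d})^{\otimes b}$ is exactly $\binom{N}{a'}\,\psi_1^{\otimes a}\otimes\chi_1\otimes\psi_1^{\otimes b}$. Hence $\langle x^ayx^b,\psi_{a'}\chi_1\psi_{b'}\rangle=\binom{N}{a'}$ for \emph{every} position $a$. Likewise $\langle x^ayx^b,\chi_{N+1}\rangle=1$ for every $a$.

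So the pairing matrix has identical columns and rank at most one. It is never triangular, and restricting $a,b$ to multiples of $p$ does not help.

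\textbf{No other test classes built from the $\psi_n,\chi_n$ will work either.} Their coproducts are cocommutative, so the subalgebra they generate is a cocommutative sub-bialgebra. Its iterated coproducts are symmetric, so none of its elements can detect where $y$ sits.

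In fact everything you use is realized by $\psi_n=\gamma_n(\psi)$, $\chi_n=\chi\,\gamma_{n-1}(\psi)$ in $\Gamma[\psi]\otimes\Lambda[\chi]$: nonvanishing, the degrees, the coproduct formulas, and graded commutativity. This algebra is at most one-dimensional in each degree. Its dual $\mathbb F_p[x]\otimes\Lambda[y]$ is commutative, so $x^ayx^b=x^{a+b}y$ there.

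Therefore no argument resting only on Corollary 3.3.1 and these coproduct formulas can give unbounded Betti numbers. You need information about $H_*(\Omega X;\mathbb F_p)$ beyond the subalgebra generated by the $\psi_n,\chi_n$, for instance noncommutativity forced by the torsion.

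The paper's own step has the same weak point. In this model its products satisfy $\chi_r\psi_s=\binom{r+s-1}{s}\chi\,\gamma_{r+s-1}$, so they are all proportional in a given degree. The claimed copy of $\mathbb F_p[u,v]$ therefore rests entirely on the cited argument of McCleary, not on Theorem 1.

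\textbf{Your first step is also unjustified.} Indecomposability does not give $\sigma(\alpha)\neq0$. In a minimal model $(TV,d_V)$, $\sigma(\alpha)=0$ exactly when $\alpha$ has a representative in $T^{\geq 2}V$, and indecomposable classes of that kind occur (typically Massey products). Lemma 2.4.4 goes the other way: it only says that classes represented by cocycles of $V$ suspend nontrivially. Your fallback, taking ``whatever $\alpha$ has $\sigma(\alpha)\neq0$'', presupposes exactly what must be shown.

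The paper does not settle this inside Proposition 4.1.2 either. It invokes Theorem 1, whose hypothesis $\sigma(\alpha)\neq0$ is not among the assumptions. It argues for that hypothesis only in Lemma 5.2.1, for the lowest-degree class of $H^*_1$ under a rational hypothesis, by citing McCleary and Mimura--Toda.
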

\begin{proof}
Let $(\chi_n)_{n\geq 1}$ and $(\psi_n)_{n\geq 1}$  be the sequences of Theorem 1.

We set $$\chi_0 = \psi_0 =\zeta_{0, 0} =1, \quad \zeta _{r, 0}:=\chi_r, \quad \zeta_{0, s}:=\psi_s$$ 
and 
$$ \zeta_{r, s}:=\chi_r\psi_s,\quad r,s \geq 1.$$

Observe that:
\begin{enumerate}
\item For every $n\geq 1$,  $\deg(\chi_n)$ is odd and  
$\deg(\psi_n)$ is even,
\item $(\deg(\chi_n))_{n\geq 1}$ and $(\deg(\psi_n))_{n\geq 1}$ are arithmetic sequences.
\end{enumerate}
We follow the arguments of \cite{Mc2}, paragraph 2, to conclude that $H^*(\Omega X; \mathbb F_p)$ contains a vector space isomorphic, as a vector space, to the polynomial algebra $\mathbb F_p[u, v]$.
\end{proof}

\subsection{Free loop space}
Let $(A, d_{A})$ be
a DG  algebra with coefficients on a commutative ring $\bk$ with unit.

If $\delta$ denotes
the differential of the reduced bar construction $\bB (A, d_{A})$, then the
tensor product $(A, d_{A})\otimes_{\bk}\bB (A, d_{A})$
is a chain complex whose differential is denoted by $d_A\otimes \delta$ defined by $d_A\otimes \delta(a\otimes x)= d_Aa\otimes x+(-1)^{\deg(a)}a\otimes \delta x$ where $a\in A$ and $x\in \bB A$.\\
By definition the Hochschild complex of $(A, d_A)$ with coefficients in the $A$-bimodule $A$
is a pair $(\frak{C}(A), D)$, where $\frak {C}_{\ast}A= \{\frak
{C}_{k}(A)\}_{k\geq 0}$; $\frak {C}_{k}(A)= A\otimes B_{k}A$ and
$$
\begin{array}{ccc}
D(a_0\otimes [a_{1}|a_{2}|\cdot\cdot\cdot|a_{k-1}|a_{k}])&=& (d_{0}-
d_{1})(a_0\otimes [a_{1}|a_{2}|\cdot\cdot\cdot|a_{k-1}|a_{k}]) \\
&+&
d_A\otimes \delta(a_{0}\otimes[a_{1}|a_{2}|\cdot\cdot\cdot|a_{k-1}|a_{k}]),\\
\end{array}
$$
with
$$d_{0}(a_{0}\otimes[a_{1}|a_{2}|\cdot\cdot\cdot|a_{k-1}|a_{k}])=
(-1)^{\deg(a_{0})}a_{0}a_{1}\otimes[a_{2}|\cdot\cdot\cdot|a_{k-1}|a_{k}]$$ and
$$d_{1}(a_{0}[a_{1}|a_{2}|\cdot\cdot\cdot|a_{k-1}|a_{k}])=
(-1)^{(\deg(a_{k})+ 1)(\deg(a_{0})+\cdot\cdot\cdot+ \deg(a_{k-1})+
k-1)}a_{k}a_{0}\otimes[a_{1}|a_{2}|\cdot\cdot\cdot|a_{k-1}].$$
The
Hochschild homology of $(A, d_A)$ with coefficients in  the $A$-bimodule $A$ denoted by
$HH_{\ast}(A, d_A)$ is defined as follows: $HH_{\ast}(A, d_A)=
H_{\ast}(\frak{C}(A), D).$

 When $(A, d_A) = C^*X$, the DG algebra of  normalized cochains   on the topological space $X$, $HH_{\ast}(X):= HH_{\ast}(C^*X)$ is the Hochschild homology of $X$.

When $((A, d_A), \mu_A)$ is a shc DG algebra (in the sense of \cite{Mu}) and $\bk$ a field, regarding \cite[Theorem 1]{BT}, the composite  
    $$
    \xymatrix{
       {\mathfrak C}_\ast  A \otimes  {\mathfrak C}_\ast  A 
       \ar[r]_{sh}  \ar @/^3pc/[rrrrr]^\star &  {\mathfrak C}_\ast ( A\otimes  A)
       \ar[rr]_{ s_{ A\otimes  A}} &&{\mathfrak C}_\ast \bar
       \Omega \bB(A\otimes A)
       \ar[r]_{\quad {\mathfrak C}_\ast  \mu_A} & {\mathfrak C}_\ast  \bar \Omega \bB \mathcal A
      \ar[r]_{\,{\mathfrak C}_\ast  \alpha_A} & {\mathfrak C}_\ast \mathcal A \\
}
  $$
is a  product on ${\mathfrak C}_\ast  A$ such that   $\hH_\ast(A, d_A)$ is a commutative algebra.
Here $s_{ A\otimes  A}$ a linear  section of 
$ {\mathfrak C}_\ast  \bar \Omega \bB  (A\otimes A)\rightarrow {\mathfrak C}_\ast  (A\otimes A).$

 The sequence 
\begin{eqnarray}
 \xymatrix{
(A, d_A)
       \ar[rr] ^{I }&& 
        {\cal C}_*(A, d_A)   \ar@{->>}[rr]^{\Phi}&& \bB (A, d_A)
       } 
       \,,
       \end{eqnarray} is of homomorphisms of DG  algebras, where

$$
 I(a_0\otimes[a_1\vert a_2\vert\cdots\vert a_n]) =  \left\{ \begin{array}{lll} 
 &0 &\mbox{ if  }a_0\in \bar A\\
&[a_1\vert a_2\vert\cdots\vert a_n] &\mbox{ if  } a_0=1. 
\end{array}
\right.\,.$$

\subsubsection{End of the proof}
 We recall the following result.
 \begin{Pro}(\cite{Mu}){\sl The DG algebra of normalized cochains $C^*(X; \bk)$ on $X$ is a shc DG algebra.
 
 }\end{Pro}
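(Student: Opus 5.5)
This proposition is a citation of Munkholm's theorem that $C^*(X;\bk)$ is strongly homotopy commutative. I will not rederive it from scratch. The plan is to exhibit the shc structure, meaning a DG coalgebra morphism $\bB(A\otimes A)\to \bB A$ extending the product, out of the simplicial structure of singular cochains.

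\textbf{Step 1: the diagonal.} Write $A=C^*(X;\bk)$ for the normalized singular cochains. The Alexander--Whitney map, composed with the diagonal $X\to X\times X$, gives the cup product. Equivalently, it is the composite
$$A\otimes A\xrightarrow{EZ^\vee} C^*(X\times X)\xrightarrow{\Delta^*} A,$$
where $EZ^\vee$ is the dual of the Eilenberg--Zilber shuffle map. The map $\Delta^*$ is a strict DG algebra morphism. So everything reduces to one claim: the comparison $A\otimes A\to C^*(X\times X)$ is the degree-one component of a DG coalgebra morphism
$$\bB(A\otimes A)\to \bB C^*(X\times X),$$
that is, an $A_\infty$ (shm) morphism.

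\textbf{Step 2: the Gugenheim--Munkholm twisting cochain.} Over $\bk$, the Eilenberg--Zilber contraction relates $C_*(X\times X)$ and $C_*(X)\otimes C_*(X)$, with Alexander--Whitney in one direction, shuffle in the other, and the Shih homotopy. The shuffle map is a coalgebra map, and the Alexander--Whitney map is strictly compatible with the diagonal. Dualizing, I apply the basic perturbation lemma to the tensor-trick contraction of bar constructions. This yields an explicit twisting cochain
$$\bB(A\otimes A)\to C^*(X\times X),$$
whose higher components are built as iterated composites $h(\mu\otimes\cdots)$ of the Shih homotopy with products. This is the standard Gugenheim--Munkholm construction. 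Composing with $\Delta^*$ then gives the required shc product
$$\mu_A:\bB(A\otimes A)\to \bB A.$$

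\textbf{Step 3: the remaining conditions.} Munkholm's definition of shc requires more than the existence of $\mu_A$. It must restrict to the identity on each factor, and it must be homotopy associative and homotopy commutative in the category of DG coalgebras. Restriction to the factors is immediate from normality of the contraction. Homotopy commutativity comes from naturality of the Eilenberg--Zilber data under the twist $X\times X\to X\times X$: the two products differ by a coalgebra homotopy, which is again produced by the perturbation lemma. Associativity follows the same way, using $X\times X\times X$.

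\textbf{Main obstacle.} The hard part is the sign- and filtration-checking showing that the perturbed maps are genuinely DG coalgebra morphisms and that the homotopies are coalgebra homotopies. In the paper I would simply cite \cite{Mu} (Munkholm, \emph{shc maps}) at this point, since this is exactly his result.
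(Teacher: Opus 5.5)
Your proposal agrees with the paper. The paper gives no argument for this statement and simply attributes it to Munkholm \cite{Mu}, which is also what your proof rests on.

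If you keep the supplementary sketch, correct Step~1. The cup product factors through the cochain cross product $f\otimes g\mapsto (f\otimes g)\circ AW$, i.e.\ the dual of the Alexander--Whitney map $AW^\vee$, not $EZ^\vee$. Your first sentence of Step~1 has this right; the displayed composite does not. The dual of the shuffle map goes the other way, $C^*(X\times X)\to (C_*X\otimes C_*X)^\vee$. It is already strictly multiplicative, because $EZ$ is a coalgebra map. It is precisely because $AW$ is not a coalgebra map that $AW^\vee$ needs the Shih-homotopy corrections.

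Step~3 has a similar imprecision. Only the shuffle map is equivariant under the twist; $AW$ and the Shih homotopy are not. So homotopy commutativity cannot come from naturality of the whole contraction. It comes from comparing $AW^\vee$ with its conjugate $T^*\circ AW^\vee\circ\tau$. Both compose with $EZ^\vee$ to give the inclusion $A\otimes A\subset (C_*X\otimes C_*X)^\vee$, since $T_*\circ EZ=EZ\circ\tau$. Two such shm sections of $EZ^\vee$ are then shm-homotopic. Since $\Delta^*\circ T^*=\Delta^*$, this yields $\mu\circ\tau\simeq\mu$.
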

\begin{Pro}(See \cite{BT}, \cite{J}){\sl If $(A, d_A)=C^*(X; \bk)$ is the DG algebra of normalized singular cochains  on $X$,   there exists a natural isomorphism  $$
H^\ast(LX;\mathbb F_p) \cong \hH_*(A, d_A)\,,$$
as graded algebras.
  
  }\end{Pro}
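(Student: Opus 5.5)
This is the Jones-type isomorphism, which the paper attributes to \cite{BT} and \cite{J}. I would prove it by setting up a cosimplicial model of $LX$ and then transporting the algebra structure.

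\textbf{Additive isomorphism.} Consider the pullback square exhibiting $LX$ as the fibre product of the free path fibration $X^I\to X\times X$ along the diagonal $\Delta: X\to X\times X$.

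Since $X$ is simply connected, the Eilenberg--Moore theorem applies, giving
\[
H^*(LX;\mathbb F_p)\cong \mathrm{Tor}^{C^*(X\times X)}\bigl(C^*(X),C^*(X^I)\bigr).
\]
Next I replace $C^*(X^I)$ by $C^*(X)$, using the homotopy equivalence $X\to X^I$. I replace $C^*(X\times X)$ by $A\otimes A$, using the Eilenberg--Zilber quasi-isomorphism, which is a morphism of DG algebras only up to strong homotopy. This is exactly where the shc structure of Proposition 4.2.1 (\cite{Mu}) is needed: it lets the bimodule structures be compared through $\bar\Omega\bB$, with $\alpha_A$ a quasi-isomorphism.

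The Tor then becomes $\mathrm{Tor}^{A\otimes A^{op}}(A,A)$. I compute it with the two-sided bar resolution $A\otimes\bB A\otimes A$ of $A$. Tensoring this resolution over $A\otimes A^{op}$ with $A$ gives precisely the Hochschild complex $(\mathfrak C(A),D)$ written above, with $d_0-d_1$ coming from the two bimodule actions. Hence $H^*(LX;\mathbb F_p)\cong\hH_*(A,d_A)$ as graded vector spaces. Naturality follows because every step is natural in $X$.

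\textbf{Multiplicativity.} The cup product on $H^*(LX)$ is induced by the diagonal of $LX$. Under the Eilenberg--Moore isomorphism, this diagonal corresponds to the product on the Tor induced by the algebra structures of the three DG algebras involved. These algebras are commutative only up to homotopy, which is why the shc structure is used again. On the Hochschild side the resulting product is the composite $\star$ defined above, namely the shuffle map $sh$, followed by $s_{A\otimes A}$, $\mathfrak C_*\mu_A$ and $\mathfrak C_*\alpha_A$.

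One must check that the isomorphism intertwines these two products. I would do this by comparing both with the product on the cobar model $\mathfrak C_*\bar\Omega\bB A$. Because $\alpha_A$ is a quasi-isomorphism, the choice of the section $s_{A\otimes A}$ does not matter in homology.

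\textbf{Main obstacle.} The hard step is this multiplicativity: showing that the Eilenberg--Moore product agrees with $\star$ at the homology level. It requires controlling the higher homotopies in the shc structure $\mu_A$, following \cite[Theorem 1]{BT}. The additive part is standard once the two-sided bar resolution is identified with $\mathfrak C(A)$.
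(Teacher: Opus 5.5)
The paper gives no argument of its own here. It simply cites \cite{J} for the additive isomorphism and \cite[Theorem 1]{BT} for multiplicativity. Your outline is a standard route to that result and, like the paper, rests on \cite[Theorem 1]{BT} for the multiplicative comparison: Eilenberg--Moore for the pullback of $X^I\to X\times X$ along the diagonal (using the simple connectivity that the statement leaves implicit), then the two-sided bar resolution producing $(\mathfrak C(A),D)$, then the shc structure of \cite{Mu} for non-commutativity and for the product $\star$. The one step you should spell out more than you do is the passage from $\mathrm{Tor}^{C^*(X\times X)}(C^*X,C^*X)$ to $\mathrm{Tor}^{A\otimes A^{op}}(A,A)$. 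Through $\Delta^*$ and the Eilenberg--Zilber map, $a\otimes b$ acts on $x$ by $a\cup b\cup x$, not by the bimodule action $\pm\, a\cup x\cup b$ that underlies $d_0-d_1$. So the cup-one/shc homotopy commutativity must be used coherently at that point, and not only to correct the non-multiplicativity of the Eilenberg--Zilber map.
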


\vspace{3mm}
\begin{Rem}
  In the proof of Theorem 1, we have constructed  two sequences of  cocycles   $( \theta(n))_{n\geq 1}$, $(b(n))_{n\geq 1}$ in $\bB( A, d_{\hat A})$ such that 
 $\mbox{cls}(\theta(n))=\psi_n\neq 0$ and   $\mbox{cls}(b(n))=\chi_n\neq 0$.

\end{Rem}

  We set in ${\cal C}_*( A, d_{ A})$,
 $$  z_{r, 0}:=1\otimes b(r),  z_{0, s}:=1\otimes  \theta(s),  z_{r, s}=z_{r, 0}\star  z_{0, s}.$$
 
   \begin{Lem}{\sl The elements $z_{r, s}$ are cycles in 
   ${\cal C}_*( A, d_{ A})$.
   }\end{Lem}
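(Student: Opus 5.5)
Because the Hochschild differential is $D=(d_0-d_1)+d_A\otimes\delta$, and the $\star$-product is a composite of chain maps, the plan splits into two parts: first show that the elements $z_{r,0}=1\otimes b(r)$ and $z_{0,s}=1\otimes\theta(s)$ are cycles, then obtain $z_{r,s}$ from the fact that $\star$ is a chain map.

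\textbf{The generators $z_{r,0}$ and $z_{0,s}$.} Take $x\in\bB_k A$ with $k\geq1$, and consider $1\otimes x$.
\begin{itemize}
\item Since $d_A1=0$ and $1$ has degree $0$, we get $(d_A\otimes\delta)(1\otimes x)=1\otimes\delta x$.
\item By Lemma 3.1.2-1 and Lemma 3.1.5-1, $\delta\theta(s)=0$ and $\delta b(r)=0$. So this part of $D$ vanishes on both elements.
\end{itemize}
It remains to treat $(d_0-d_1)$. For a monomial $[a_1|\cdots|a_k]$:
\begin{itemize}
\item $d_0(1\otimes[a_1|\cdots|a_k])=a_1\otimes[a_2|\cdots|a_k]$;
\item $d_1(1\otimes[a_1|\cdots|a_k])=\pm\, a_k\otimes[a_1|\cdots|a_{k-1}]$, with sign $(-1)^{(\deg a_k+1)(\deg a_1+\cdots+\deg a_{k-1}+k-1)}$.
\end{itemize}
The task is therefore to show that $\sum d_0=\sum d_1$ when summed over all monomials of $\theta(s)$, and likewise of $b(r)$.

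For $\theta(s)$ we use the explicit form
\[
\theta(s)=\sum_{\ell}\ \sum_{i_1+\cdots+i_\ell=s}[\theta_{i_1}|\cdots|\theta_{i_\ell}],
\]
which is the description already used in the proof of Lemma 3.1.2-2. Then
\[
\sum d_0=\sum_{j=1}^{s}\theta_j\otimes\theta(s-j),\qquad
\sum d_1=\sum_{j=1}^{s}\pm\,\theta_j\otimes\theta(s-j),
\]
where $\theta(0)=1$ as in the Definitions 3.1.1. We match $d_0$ of $[\theta_j|w]$ with $d_1$ of $[w|\theta_j]$. All $\theta_i$ have odd degree, so every $s\theta_i$ has even degree. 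Hence the exponent $(\deg\theta_j+1)(\cdots)$ is even and the sign is $+1$. The two sums cancel, and $z_{0,s}$ is a cycle.

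For $b(r)$ we use the analogous description from Lemma 3.1.5-2: the monomials are words in the $\theta$'s containing exactly one letter $b_i$, with a coefficient $+1$ or $-1$ coming from $\delta^{-1}$. Splitting off the first letter (for $d_0$) or the last letter (for $d_1$) gives
\[
\sum d_0=\sum_j\bigl(\theta_j\otimes b(r-j)+b_j\otimes\theta(r-j)\bigr),
\]
and the same expression, up to sign, for $\sum d_1$. The letter $b_j$ has even degree, so $s b_j$ has odd degree. The sign in $d_1$ therefore depends on the parity of the total suspended degree of the remaining letters, and that total is odd exactly when $b$ sits in the remaining word.

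\textbf{Main obstacle.} The hardest step is this sign bookkeeping, combined with the minus signs that $\delta^{-1}$ attaches to $b$-letters. If the terms do not cancel exactly, I would first adjust the definition of $b(r)$ by signs. Failing that, I would argue at the level of homology: $d_0-d_1$ followed by the projection is the cyclic-rotation operator, and I would use the coproduct formula in Lemma 3.1.5-2, which says $b(r)$ is a $(\theta,\theta)$-coderivation-like element, to deduce rotation invariance.

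\textbf{The products $z_{r,s}$.} The map $\star$ is a composite of chain maps: the shuffle map $sh$, the section $s_{A\otimes A}$, ${\mathfrak C}_*\mu_A$ and ${\mathfrak C}_*\alpha_A$. Here $s_{A\otimes A}$ should be chosen as a chain-level section, which is possible since ${\mathfrak C}_*\alpha$ is a surjective quasi-isomorphism; this is implicit in \cite[Theorem 1]{BT}. Hence
\[
D(z_{r,0}\star z_{0,s})=Dz_{r,0}\star z_{0,s}\pm z_{r,0}\star Dz_{0,s}=0,
\]
so $z_{r,s}$ is a cycle.
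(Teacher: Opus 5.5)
Your plan is the paper's: the $d_A\otimes\delta$ part vanishes because $\delta\theta(s)=\delta b(r)=0$, the $(d_0-d_1)$ part is killed by matching $d_0$ of a word $[x|u]$ with $d_1$ of the rotated word $[u|x]$, and $z_{r,s}$ follows from the Leibniz rule for $\star$. Your remark that $s_{A\otimes A}$ must be a chain section is more careful than the paper's ``linear section''; such a section exists because a surjective quasi-isomorphism over a field has contractible kernel. The $\theta(s)$ case is complete and is the paper's argument.

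The gap is $b(r)$, the only case with content. You stop exactly where the argument has to be made and leave open whether the $d_0$ and $d_1$ sums cancel. Your fallback of proving rotation invariance ``at the level of homology'' cannot work, since $1\otimes b(r)$ must already be a cycle before it has a class. Your parity remark is also incomplete. The suspended degrees of the remaining letters do add up to an odd number when $b$ is among them. But then the removed last letter is some $\theta_i$, and the other factor $\deg\theta_i+1$ of the exponent is even.

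The missing step, which the paper's proof supplies, is that the exponent $(\deg c_{i_\ell}+1)(\deg c_{i_1}+\cdots+\deg c_{i_{\ell-1}}+\ell-1)$ is always even. If $c_{i_\ell}$ is a $\theta$, the first factor is even. If $c_{i_\ell}$ is the $b$-letter, all other letters are $\theta$'s, so the second factor $\sum_{j<\ell}(\deg\theta_{i_j}+1)$ is even. Hence every $d_1$ sign is $+1$, and $\sum d_0=\sum d_1$ as soon as $[x|u]$ and $[u|x]$ carry the same coefficient. That holds when $b(r)$ is the sum, each with coefficient $+1$, of all words $[c_{i_1}|\cdots|c_{i_\ell}]$ with $i_1+\cdots+i_\ell=r$ and exactly one $b$-letter. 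The paper's proof tacitly uses this form.

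Your suspicion about the $-1$ that Definition 3.1.4 attaches to the splitting $b_i\mapsto\theta_j|b_{i-j}$ is justified. Under the conventions of \S2.1, $\delta''[\theta_j|b_m]=[\theta_jb_m]$ and $\delta''[b_j|\theta_m]=-[b_j\theta_m]$. So with that sign, the word-length-one component of $\delta b(n)$ would be $\delta'[b_n]+\delta''b_{n,2}=-2\sum_i[\theta_ib_{n-i}]$, which does not vanish in general for $p$ odd. The all-$+1$ element is the right one, and it gives both $\delta b(r)=0$ and the rotation matching. One way to see both at once: $\theta_n+\epsilon b_n$, with $\epsilon$ odd of degree $\deg\theta_1-\deg b$ and $\epsilon^2=0$, is again a Kraines sequence, so you can take the $\epsilon$-linear part of the $\theta$ case. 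Thus ``adjusting the signs'' is not an optional patch but the definition you must use; with it, your argument goes through.
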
  
  \begin{proof} \begin{enumerate}
  \item We first  prove that $( d_0- d_1)(1\otimes  z_{r,0})=( d_0- d_1)(1\otimes  z_{0,s})=0$.  We do it for $b(n)$, the case $\theta(n)$ being similar and more simple.
   
   Indeed, $$
   \begin{array}{l}
   ( d_0- d_1)(1\otimes  z_{r,0}) = ( d_0- d_1)(1\otimes b(r))\\
    =\sum_{l=1}^r\sum_{i_1+...+i_\ell=r} c_{i_1}\otimes[ c_{i_2}| c_{i_3}|....| c_{i_\ell}]\\
    -\sum_{\ell=1}^r\sum_{i_1+...+i_\ell=r} (-1)^{(\mbox{deg}( c_{i_{\ell}})+ 1)(\mbox{deg}( c_{i_1})+..+ \mbox{deg}( c_{i_{\ell-1}})+
\ell-1)} c_{i_l}\otimes[ c_{i_1}| c_{i_2}|....| c_{i_{\ell-1}}].
    \end{array}$$
    Remember that $\deg ( \theta_{i_j})$ is odd and $\deg ( b)$ is even.
      The statement that {\it  for every partition  $\sum_{j=1}^{\ell}i_j =r$ there exists only one $j_0$ such that $ c_{i_{j_0}}= b_{j_0}$  and $c_{i_j}=\theta_{i_j}$ for all $j\neq j_0$}, implies that
         $\deg ( b_{i_j})$ is even and hence   $(\mbox{deg}( c_{i_{\ell}})+ 1)(\mbox{deg}( c_{i_1})+..+ \mbox{deg}( c_{i_{\ell-1}})$ is always even. Thus $( d_0- d_1)(1\otimes  z_{r,0})=( d_0- d_1)(1\otimes  z_{0,s})=0$.
       
 \item  Notice that $D$ is a derivation and $z_{r, s}= z_{r,0}\star z_{s,0}$. As $\delta b(r)=\delta \theta(s)=0$, then $Dz_{r, s}= (d_0-d_1)z_{r, s}$.

 Since $Dz_{r, s}=D(z_{r,0}\star z_{s,0})= (Dz_{r,0})\star z_{0,s} + (-1)^{\deg(z_{r,0})}(z_{r,0})\star Dz_{0,s}$  then  $z_{r, s}$ is a $D$-cycle. \end{enumerate} 
 \end{proof}
  
 \begin{Pro}{\sl  Let $X$ be a finite CW complex.
 
 If $H^*_1(X; \mathbb F_p)\neq \{0\}$, the sequence $(dim_{\mathbb F_p}H^n(LX; \mathbb F_p))_{n\geq 1}$ grows unbounded.
  }\end{Pro}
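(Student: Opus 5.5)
The plan is to combine the cycles $z_{r,s}$ of Lemma 4.2.5 with the isomorphism $H^*(LX;\mathbb F_p)\cong \hH_*(A,d_A)$ of Proposition 4.2.4. The goal is to show that their classes span, in each degree $n$, a subspace whose dimension grows without bound, mirroring the loop space argument of Proposition 4.1.2. First, by Theorem 1 (in the form of Corollary 3.3.1, which applies because $H^*_1(X;\mathbb F_p)\neq\{0\}$) we have the cocycles $\theta(n)$ and $b(n)$ in $\bB(A,d_A)$. Their classes $\psi_n$ and $\chi_n$ are nonzero, of even and odd degree respectively, and their degrees form arithmetic sequences. We then use the cycles $z_{r,s}=z_{r,0}\star z_{0,s}$ in ${\cal C}_*(A,d_A)$. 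Since the degrees of $z_{r,s}$ are $\deg\chi_r+\deg\psi_s$, which is of the form $c+ r\,u+s\,v$ for fixed integers $c,u,v$, the number of pairs $(r,s)$ with $z_{r,s}$ in a given degree $n$ grows linearly along a suitable arithmetic progression of $n$, exactly as in the polynomial algebra $\mathbb F_p[u,v]$.

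The key step is to prove that the classes $\mbox{cls}(z_{r,s})$ with $\deg z_{r,s}=n$ are linearly independent in $\hH_*(A,d_A)$. For this I would use the map $\Phi:{\cal C}_*(A,d_A)\to\bB(A,d_A)$ from the sequence (20), which sends $1\otimes x$ to $x$ and is a homomorphism of DG algebras. It therefore induces an algebra map $H(\Phi):\hH_*(A,d_A)\to H^*\bB(A,d_A)\cong H^*(\Omega X;\mathbb F_p)$; geometrically this is restriction along the fibre inclusion $\Omega X\to LX$. Under $H(\Phi)$ the class of $z_{r,s}$ goes to $\chi_r\psi_s=\zeta_{r,s}$. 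In the proof of Proposition 4.1.2, following \cite{Mc2}, the elements $\zeta_{r,s}$ span a copy of $\mathbb F_p[u,v]$ in $H^*(\Omega X;\mathbb F_p)$, so they are linearly independent there. Hence the $\mbox{cls}(z_{r,s})$ are linearly independent in $\hH_*(A,d_A)$. The dimension of $\hH_n(A,d_A)\cong H^n(LX;\mathbb F_p)$ is then bounded below by the number of pairs $(r,s)$ of that degree, which is unbounded.

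The main obstacle is the compatibility of products. One must check that $\Phi$ carries the shuffle-type product $\star$ on ${\cal C}_*A$ to the product of $\bB A$ realizing the Hopf algebra structure of Proposition 4.1.1. Then $H(\Phi)(\mbox{cls}(z_{r,0}\star z_{0,s}))=\chi_r\psi_s$ holds, and not merely a product modulo terms of lower filtration. I would first try to verify this from naturality of the construction in \cite{BT}, Theorem 1, using that the shc structure $\mu_A$ induces both products. If only a filtered statement is available, it still suffices: the leading terms $\zeta_{r,s}$ are independent, so linear independence of the $\mbox{cls}(z_{r,s})$ follows by an associated-graded argument. A secondary check is that the arithmetic-progression count really gives unbounded dimension for $n\geq 1$ and not only along a subsequence; but unboundedness of the sequence only requires a subsequence, so that point is harmless.
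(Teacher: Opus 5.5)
Your route is the paper's own: lift $\zeta_{r,s}=\chi_r\psi_s$ to the Hochschild cycles $z_{r,s}$, push them to $H^*\bB(A,d_A)$ with $H(\Phi)$ (restriction to the fibre $\Omega X\to LX$), and count. The paper's proof records only $H(\Phi)(\mbox{cls}(z_{r,s}))=\zeta_{r,s}\neq 0$ and refers back to Proposition 4.1.2. You rightly see that non-vanishing is not enough and that you need the $\zeta_{r,s}$ of a fixed degree to be linearly independent. That step has no support. The proof of Proposition 4.1.2 only defers to McCleary. Independence also does not follow from what Theorem 1 and Remark 3.1.8 give you: non-vanishing, parities, degrees in arithmetic progression with the same common difference (so $\deg\zeta_{r,s}$ depends only on $r+s$), and the coproduct formulas.

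Here is a counterexample to that step. Take the Moore space $X=S^4\cup_p e^5$. It is simply connected and finite, and it has $\beta\in H^4_1$, $\alpha\in H^5_0$ with $\beta=s\alpha$ and $\sigma(\alpha)\neq 0$. Its loop cohomology $H^*(\Omega X;\mathbb F_p)\cong H^*\bB(A,d_A)$ is the shuffle algebra on the letters $[\theta]$ (degree $4$) and $[b]$ (degree $3$), being dual to $T(u_3,v_4)$ with primitive generators. Set $\psi_n=[\theta|\cdots|\theta]$ and $\chi_n=\sum_{j=1}^n[\theta|\cdots|b|\cdots|\theta]$, with $b$ in the $j$-th slot. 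These classes have every property above; in a formal model they are what the construction yields, and the formula of Lemma 3.1.5 forces all coefficients of $\chi_n$ to be $1$. But $\chi_n=\chi_1\psi_{n-1}$ and $\psi_i\psi_j=\binom{i+j}{i}\psi_{i+j}$. Hence
$\zeta_{r,s}=\binom{r+s-1}{s}\chi_{r+s}$, so in degree $4N-1$ every $\zeta_{r,s}$ lies on the line $\mathbb F_p\chi_N$. Some even vanish, for example $\zeta_{2,p-1}=p\,\chi_{p+1}=0$.

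Detecting the classes $\mbox{cls}(z_{r,s})$ through $H(\Phi)$ therefore certifies at most one dimension per degree, which is bounded. The statement is not refuted here, since $H^*(LX)$ does grow for this $X$, but your argument does not prove it.

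Your associated-graded fallback cannot repair this, because the collapse already happens in $H^*\bB(A,d_A)$. The multiplicativity of $\Phi$, which you single out as the main obstacle, is not where the argument breaks. To make this route work you need a different family of Hochschild cycles whose images are independent for an intrinsic reason, for instance detected by iterated reduced coproducts. Neither your proposal nor the paper supplies one.

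A smaller point: Corollary 3.3.1 needs $X$ simply connected and a pair $\beta=s\alpha$ with $\sigma(\alpha)\neq 0$. This is not implied by $H^*_1(X;\mathbb F_p)\neq\{0\}$ alone, so you must say where such a pair comes from. The paper produces one only in Section 5, via Lemma 5.2.1.
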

\begin{proof} With the notations above,  $H^*\Phi (\mbox{cls}(z_{r,s})) =  \zeta_{r, s} \neq 0 $.

 From Proposition 4.1.2 and Proposition 4.2.2, the sequence  $(dim H^n(LX;\mathbb F_p))_{n\geq 1}$  grows unbounded.
 
 \end{proof}
 
\vspace{3mm}

\section{Proof of main  Theorem }

 \subsection{Recollection of some known results }

\begin{Lem} {\it  If $X$ is a  simply connected  CW complex  of finite type  such that   $H^*(X; \mathbb Q)$ has at least two  generators as an algebra, then 
\begin{enumerate}
\item the sequence of the Betti numbers $\{dimH^i(\Omega X; \mathbb \mathbb F_p)\}_{i\geq 1}$ grows unbounded, 
\item the sequence of the Betti numbers $\{dim H^i(LX; \mathbb F_p)\}_{i\geq 1}$ grows unbounded.
 \end{enumerate}}
\end{Lem}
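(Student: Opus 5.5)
The plan is to reduce the statement to the rational case and then pass from $\mathbb Q$ to $\mathbb F_p$ by the universal coefficient theorem. No new algebraic construction is needed; this is essentially a recollection, so I would assemble known results rather than reuse the machinery of Sections 3 and 4.

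\textbf{Step 1 (rational unboundedness).} Since $X$ is simply connected of finite type and $H^*(X;\mathbb Q)$ needs at least two generators, I would invoke \cite{VS}: the rational Betti numbers $\dim H^i(LX;\mathbb Q)$ are unbounded. For $\Omega X$ I would use the Milnor--Moore theorem, $H_*(\Omega X;\mathbb Q)\cong U(L_X)$, together with the dichotomy on the minimal Sullivan model $(\Lambda W,d)$ of $X$.
\begin{enumerate}
\item If $X$ is rationally hyperbolic, $\dim H_i(\Omega X;\mathbb Q)$ grows exponentially.
\item If $X$ is rationally elliptic, the requirement of at least two algebra generators forces $W^{odd}$ to contain at least two elements; equivalently, $\pi_*(\Omega X)\otimes\mathbb Q$ contains at least two even-degree classes. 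By Poincar\'e--Birkhoff--Witt, $H_*(\Omega X;\mathbb Q)$ then contains a polynomial algebra on two even generators $u,v$, whose Betti numbers are unbounded.
\end{enumerate}
Indeed, the spaces whose loop homology has bounded Betti numbers are exactly those with $H^*(X;\mathbb Q)$ monogenic, as in the argument of \cite{VS}.

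\textbf{Step 2 (from $\mathbb Q$ to $\mathbb F_p$).} For $Y=\Omega X$ or $Y=LX$, the integral homology is of finite type, since $X$ is simply connected of finite type. By the universal coefficient theorem,
$$\dim H^i(Y;\mathbb F_p)=\mbox{rank}\, H_i(Y;\mathbb Z)+t_i+t_{i-1}\geq \dim H^i(Y;\mathbb Q),$$
where $t_j$ is the number of $p$-primary cyclic summands of $\mbox{tor}H_j(Y;\mathbb Z)$. Hence unboundedness over $\mathbb Q$ implies unboundedness over $\mathbb F_p$, which proves both items 1 and 2.

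\textbf{Main obstacle.} The delicate point is the loop-space assertion in the elliptic case. There I must check that ``at least two generators'' really produces two independent even-degree rational homotopy classes of $\Omega X$. Otherwise, for instance with $H^*(X;\mathbb Q)\cong\mathbb Q[x]/(x^{k})$, one only gets $\mathbb Q[u]\otimes\Lambda w$, which has bounded Betti numbers. To settle this I would first try to extract it directly from \cite{VS}. Failing that, I would argue from the fibration $\Omega X\to LX\to X$: bounded loop Betti numbers would force the minimal model to be of the truncated-polynomial type, hence $H^*(X;\mathbb Q)$ monogenic, contradicting the hypothesis.
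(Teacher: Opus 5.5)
Your proof has the same architecture as the paper's. It combines a rational unboundedness theorem with the universal coefficient inequality $\dim H^i(Y;\mathbb F_p)\geq \dim H^i(Y;\mathbb Q)$ for $Y=\Omega X, LX$, and cites \cite{VS} for $LX$ exactly as the paper does. The only difference is item 1. There the paper just invokes Sullivan's rational result \cite{Su}, while you rederive it from Milnor--Moore, the F\'elix--Halperin--Thomas dichotomy and Poincar\'e--Birkhoff--Witt. That gives a self-contained argument at the cost of more machinery.

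Your ``main obstacle'' is closed by Halperin's inequality $\dim W^{\rm even}\leq \dim W^{\rm odd}$ for elliptic minimal models. If $\dim W^{\rm odd}\leq 1$, the model is $\mathbb Q$, $(\Lambda y,0)$ or $(\Lambda(x,y),\,dy=x^n)$ with $n\geq 2$, and each has monogenic cohomology. So two algebra generators give two even classes in $\pi_*(\Omega X)\otimes\mathbb Q$, and the fibration fallback is unnecessary.

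One caveat: the dichotomy and this case count both need $\dim H^*(X;\mathbb Q)<\infty$, which ``finite type'' does not provide. Item 1 genuinely fails without it: $X=\mathbb{C}P^\infty\times\mathbb{C}P^\infty$ has $H^*(X;\mathbb Q)=\mathbb Q[x,y]$ but $\Omega X\simeq S^1\times S^1$, whose Betti numbers are bounded. The paper's appeal to \cite{Su} carries the same hidden restriction. This is harmless there because the lemma is only applied to finite complexes, but you should state that hypothesis explicitly.
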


\vspace{2mm}
\begin{proof}
\begin{enumerate}
\item This  is a consequence of a  theorem of \cite{Su} and  the universal coefficients theorem. See also \cite{Mc2} for more details.
\item It is a consequence of a result of \cite{VS} and the universal coefficients theorem.
\end{enumerate}
\end{proof}
The following observation is obvious.
  
  \begin{Lem}{\sl The  assertions 
  \begin{enumerate}
  \item $\mbox{tor}H^*(X; \mathbb Z)\neq \{0\}$,
  \item $H_1^*(X; \mathbb F_p)\neq \{0\}$
  \end{enumerate}
are equivalent where $\mbox{tor}H^*(X; \mathbb Z)$ is the $p$-torsion part of $H^*(X; \mathbb Z)$.

}\end{Lem}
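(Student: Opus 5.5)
The plan is to go through the universal coefficient decomposition (8) applied to $(A,d_A)=C^*(X;\mathbb F_p)$, which is the mod$_p$ reduction of $(\hat A,d_{\hat A})=C^*(X;\mathbb Z)$. That gives, for every $n$,
$$H^n(X;\mathbb F_p)\cong H^n(X;\mathbb Z)\otimes\mathbb F_p\oplus s\,\mbox{Tor}(H^{n+1}(X;\mathbb Z),\mathbb F_p),$$
with the second summand equal to $H_1^n(X;\mathbb F_p)$. Both implications then reduce to the algebraic fact that, for a finitely generated abelian group $G$, one has $\mbox{Tor}(G,\mathbb F_p)\neq 0$ if and only if $G$ has nonzero $p$-torsion. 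Here $H^{n+1}(X;\mathbb Z)$ is finitely generated because $X$ is a finite CW complex. A convenient way to see this fact is to write $G\cong\mathbb Z^r\oplus\bigoplus_j\mathbb Z/q_j^{e_j}\mathbb Z$. Then $\mbox{Tor}(\mathbb Z,\mathbb F_p)=0$, $\mbox{Tor}(\mathbb Z/q^e\mathbb Z,\mathbb F_p)=0$ for a prime $q\neq p$, and $\mbox{Tor}(\mathbb Z/p^e\mathbb Z,\mathbb F_p)\cong\mathbb F_p$.

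For $1\Rightarrow 2$, I take a nonzero $p$-torsion class in some $H^{n+1}(X;\mathbb Z)$. By the computation above, $\mbox{Tor}(H^{n+1}(X;\mathbb Z),\mathbb F_p)\neq 0$, so $H_1^n(X;\mathbb F_p)\neq 0$. An alternative, more concrete route uses Remark 2.3.6. I pick $\hat b\in\hat A$ with $d_{\hat A}\hat b=p^{\epsilon}\hat a$, where $\hat a$ is a cocycle whose class has order exactly $p^\epsilon$. Then $\mbox{red}_p(\hat b)$ is a cocycle, and its class lies in the Tor summand. The class is nonzero because the connecting map of $0\to\hat A\stackrel{p}{\to}\hat A\to A\to 0$ sends it to $\mbox{cls}(p^{\epsilon-1}\hat a)\neq 0$.

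For $2\Rightarrow 1$, a nonzero element of $H_1^n(X;\mathbb F_p)$ means $\mbox{Tor}(H^{n+1}(X;\mathbb Z),\mathbb F_p)\neq 0$, and hence $H^{n+1}(X;\mathbb Z)$ contains a nonzero element killed by $p$. In the cochain language of Remark 2.3.6, a nonzero $\beta\in H_1^*$ comes with $\hat b,\hat a$ satisfying $d_{\hat A}\hat b=p^\epsilon\hat a$, and $\mbox{cls}(\hat a)$ is then a $p$-torsion class in $H^*(X;\mathbb Z)$. It is nonzero, since otherwise $\hat b$ could be corrected to an integral cocycle, which would put $\beta$ in $H_0^*$.

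I expect no real obstacle here. The only points needing care are the finite generation of $H^*(X;\mathbb Z)$, which the finiteness of $X$ supplies, and the fact that Remark 2.3.6 requires the class $\alpha$ to be nonzero; this is exactly the order computation in the connecting homomorphism.
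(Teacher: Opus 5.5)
Your proof is correct and follows the reasoning the paper intends when it calls this observation obvious: the universal coefficient decomposition (8) identifies $H_1^n(X;\mathbb F_p)$ with $\mbox{Tor}(H^{n+1}(X;\mathbb Z),\mathbb F_p)$, and this Tor group is nonzero exactly when there is nonzero $p$-torsion. Finite generation is not even needed, since $\mbox{Tor}(G,\mathbb F_p)\cong\{g\in G : pg=0\}$ for any abelian group $G$, and multiplying an element of order $p^e$ by $p^{e-1}$ gives an element of order $p$.
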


\vspace{3mm}
  \subsection{First  step}
   Let $X$ be a simply connected  finite CW complex.

We now examine  the case when $H^*(X; \mathbb Q)$ has at maximum   one generator as an algebra and $H^*(X; \mathbb F_p)$ has at least two generators. In this case $H_1^*(X; \mathbb F_p)\neq \{0\}$.

\begin{Lem}{\sl  Suppose that  $H^*(X; \mathbb Q)$ has at maximum   one generator as an algebra and 
$H_1^*(X; \mathbb F_p)\neq \{0\}$.

Let $k= \mbox{inf}\{ i\quad \mbox{such that} \quad H^i_1(X; \mathbb F_p)\neq \{0\}\}$.

If $\beta=s\alpha\in H^k_1(X; \mathbb F_p)$, then  $\beta$ and $\alpha$   are generators and $\sigma(\alpha)\neq 0$.
 
}\end{Lem}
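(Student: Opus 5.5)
The plan is to exploit the minimality of $k$ together with the bar construction and the minimal model. Write $\beta=s\alpha$ with $\alpha\in H^{k+1}_0(X;\mathbb F_p)\smallsetminus\{0\}$, the reduction of a $p$-torsion class $\hat\alpha\in \mbox{tor}H^{k+1}(X;\mathbb Z)$, so that $\beta=\mbox{cls}(\mbox{red}_p(\hat b))$ with $d_{\hat A}\hat b=p^{\epsilon}\hat a$ as in Remark 2.3.5. The argument has three steps: first show that $\alpha$ is indecomposable, then deduce $\sigma(\alpha)\neq 0$, and finally get the corresponding statements for $\beta$ from Lemma 2.3.7.

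For the first step, suppose $\alpha=\sum_j \alpha'_j\alpha''_j$ with $\alpha'_j,\alpha''_j$ of positive degree. Each factor then has degree $\le k-1$, since $H^1=0$ and the degrees add up to $k+1$. By minimality of $k$ we have $H^{\le k-1}_1(X;\mathbb F_p)=0$, which forces $H^{\le k}(X;\mathbb Z)$ to be free of $p$-torsion. So every class of degree $\le k-1$ lifts to an integral class $\hat\alpha'_j,\hat\alpha''_j$, and hence $\alpha=\mbox{red}_p(\sum_j\hat\alpha'_j\hat\alpha''_j)$. Now $\alpha$ also equals $\mbox{red}_p(\hat\alpha)$ with $\hat\alpha$ torsion. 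The difference $\sum_j\hat\alpha'_j\hat\alpha''_j-\hat\alpha$ therefore lies in $pH^{k+1}(X;\mathbb Z)$.

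This is where the hypothesis on $H^*(X;\mathbb Q)$ enters. If $H^*(X;\mathbb Q)$ has at most one generator, I expect that products of classes of degree $\le k-1$ in $H^{k+1}(X;\mathbb Z)/\mbox{tor}$ are either zero or multiples of a power of the single rational generator. Separating the torsion and free parts should then show that $\hat\alpha$ is congruent mod $p$ to a torsion-free decomposable class only if $\alpha=0$, which is a contradiction. This step is the main obstacle. In particular, the case where the rational generator $x$ has degree dividing $k+1$ and $x^m$ reduces mod $p$ onto the torsion reduction needs care; there I would use that $\mbox{red}_p$ of a non-torsion integral class lies in $H_0$ while $\alpha$ comes from torsion in the universal coefficient splitting (8). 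The splitting is not natural, so I would need to check that this argument is legitimate. Once it is, $\alpha\in QH^{k+1}$ is a nonzero indecomposable, i.e.\ a generator.

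For the second step, use Proposition 2.4.1 and Lemma 2.4.4: a generator $\alpha$ that is represented by the image of a cocycle $v\in V$ has $\sigma(\alpha)\neq0$. Via (9)–(11), take the integral minimal model $T\hat V$. In degrees $\le k+1$ the lowest torsion forces a generator $\hat v\in\hat V^{k}$ with $d\hat v\equiv p^{\epsilon}\hat w$ modulo decomposables, where $\hat w\in\hat V^{k+1}$ corresponds to $\hat\alpha$. Reducing mod $p$, $w=\mbox{red}_p(\hat w)$ is a $d_V$-cocycle because $H^{\le k-1}$ contains no torsion interference, and $\varphi_A(w)$ represents $\alpha$. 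Lemma 2.4.4 then gives $\sigma(\alpha)\neq0$. Finally, Lemma 2.3.7 yields $\sigma(\beta)\neq0$. Since $\sigma$ is defined on $QH$ and is nonzero on $\beta$, the class $\beta$ is indecomposable, so it is a generator as well. This last point can also be seen directly: $\beta$ has degree $k$, and decomposables in degree $k$ reduce from integral classes, so they lie in $H_0$.
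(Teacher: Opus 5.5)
Your first step breaks down exactly at the point you flagged, and it cannot be repaired, because the assertion about $\alpha$ is false. Let $X=(S^2\vee S^3)\cup_{\eta+p\iota_3}e^4$, where $\eta\in\pi_3(S^2)$ is the Hopf map. This is a simply connected finite complex. Its cellular cochains, with $\delta e_3^*=p\,e_4^*$, give $H^2(X;\mathbb Z)=\mathbb Z w$, $H^3(X;\mathbb Z)=0$, and $H^4(X;\mathbb Z)=\mathbb Z/p$ generated by $[e_4^*]$. So $H^*(X;\mathbb Q)=\mathbb Q[w]/(w^2)$ has one generator, $k=3$, $\beta=[e_3^*]$ and $\alpha=\mbox{red}_p[e_4^*]$.

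Collapsing $S^3$ gives a cellular map $q\colon X\to S^2\cup_\eta e^4=\mathbb{CP}^2$ of degree one on the $2$- and $4$-cells. Hence $q^*(u)=w$ and $q^*(u^2)=[e_4^*]$ for the generator $u\in H^2(\mathbb{CP}^2;\mathbb Z)$, so $w^2=[e_4^*]$ integrally and $\alpha=w^2$ in $H^4(X;\mathbb F_p)$. Thus $\alpha$ is decomposable, and $\sigma(\alpha)=0$ because $[ab]=\pm\delta[a|b]$ in $\bB(A,d_A)$ for cocycles $a,b$. Only the part of the statement about $\beta$ survives. Your closing argument for it is fine: decomposables in degree $k$ lie in $\mbox{im}(\mbox{red}_p)$ because $H_1^{<k}=0$.

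The example also shows why your proposed fix cannot work. First, $\alpha$ is not on the $H_1$ side of (8). As a reduction of an integral class it lies in $H_0=\mbox{im}(\mbox{red}_p)$, together with all reductions of products, so no appeal to the splitting can separate them. Second, a product of non-torsion integral classes can be torsion: here $w^2\in H^4(X;\mathbb Z)$, with $w$ of infinite order, is precisely the $p$-torsion generator. Your (correct) description of products modulo torsion says nothing about their torsion components, and that is exactly where $\hat\alpha$ lives.

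Your second step fails for the same reason. The integral minimal model begins with $\hat V^2=\mathbb Z\hat v$, $\hat V^3=\mathbb Z\hat u$, $d\hat u=p\,\hat v^2$. So $\hat\alpha$ is represented by the decomposable $\hat v^2$, and there is no generator $\hat w\in\hat V^{k+1}$ to feed into Lemma 2.4.4.

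The paper's own proof has the same hole. In its case $\deg w<k$ it declares the indecomposability of $\alpha$ ``obvious'' and then appeals to transgression, but a decomposable class has zero suspension regardless. Indecomposability of $\alpha$ is automatic only in the paper's other cases ($H^+(X;\mathbb Q)=0$ or $k\le\deg w$). There $H^i(X;\mathbb F_p)=0$ for $0<i<k$ and $k+1<2k$. In general the statement needs an extra hypothesis, not a better proof.
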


 \begin{proof}
 Let $\beta\in  H^k_1(X; \mathbb F_p)$.
 
 \begin{enumerate}
 \item If  $H^*(X; \mathbb Q)=\{0\}$, then $C^*(X; \mathbb F_p)$ is $(k-1)$-connected, $k\geq 2$, and  $\alpha$ and $\beta$ are indecomposable.  By corollary 8.25 of \cite{Mc}, $\sigma(\alpha)\neq 0$ and $\sigma(\beta)\neq 0$.
 
 \item Suppose that  $H^*(X; \mathbb Q)$ has only one generator $w$.  
 \begin{enumerate}
 \item If $k\leq \mbox{deg}( w)$,  then,  again   we apply corollary 8.25 of \cite{Mc} and obtain that $\sigma(\beta)\neq 0$ and $\sigma(\alpha)\neq 0$.
 \item  Suppose that $k> \mbox{deg}( w)$. It is obvious that $\alpha$ and $\beta$ are indecomposable.

 Observe that in the   Leray-Serre spectral for the path-loop fibration,  and since $X$ is simply connected,   $\alpha$ and $\beta$ are transgressed. Thus following \cite{MT}VII-2.6, $\sigma(\alpha)\neq 0$ and $\sigma(\beta)\neq 0$.
 
 \end{enumerate}
 \end{enumerate}
 
 \end{proof}

\begin{Pro}{\sl  Let $X$ be a simply connected  finite CW complex  such that 

 $H^*(M; \mathbb Q)$ has at most one generator as an algebra and   $H^*(X; \mathbb F_p)$ admits at least two generators as an algebra,
 then 
\begin{enumerate}
\item the sequence  $(dim H^n(\Omega X; \mathbb F_p))_{n\geq 1}$ grows unbounded,  
 \item the sequence $(dim H^n(LX; \mathbb F_p))_{n\geq 1}$ grows unbounded.  
\end{enumerate}

}\end{Pro}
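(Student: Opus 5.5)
The plan is to reduce the Proposition to Theorem 2, or more precisely to Corollary 3.3.1 combined with Propositions 4.1.2 and 4.2.4. The first task is to produce a nonzero class $\beta\in H^*_1(X;\mathbb F_p)$ with $\beta=s\alpha$ and $\sigma(\alpha)\neq 0$. Everything after that follows formally from the machinery of Sections 3 and 4.

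First I show that $H^*_1(X;\mathbb F_p)\neq\{0\}$. Suppose instead that $H^*_1(X;\mathbb F_p)=\{0\}$. Then by Lemma 5.1.2, $H^*(X;\mathbb Z)$ has no $p$-torsion. The universal coefficients decomposition (8) then gives $H^*(X;\mathbb F_p)=H^*(X;\mathbb Z)\otimes\mathbb F_p$. Being $p$-torsion free, $H^*(X;\mathbb Z)$ modulo its (prime-to-$p$) torsion is a free module whose rank equals the rational Betti numbers. A minimal set of algebra generators of $H^*(X;\mathbb F_p)$ corresponds to a basis of the indecomposables. Compare $QH^*(X;\mathbb Z_{(p)})\otimes\mathbb F_p$ with $QH^*(X;\mathbb Q)$: if $H^*(X;\mathbb Q)$ is monogenic, generated by $w$, then the integral lattice is generated over $\mathbb Z_{(p)}$ by a lift $\hat w$ together with classes $\hat w^j/p^{e_j}$. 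The decomposables mod $p$ then only pick up powers of $\hat w$ up to unit multiples, so the indecomposables mod $p$ could have dimension greater than one. This is the delicate point of this step. For example, $\mathbb F_p[\hat w]$ truncated with divided-power-type denominators can have two generators mod $p$. I expect to handle it by noting that in that case the extra generator $\hat w^j/p^{e_j}$ is a nontrivial mod-$p$ class whose Bockstein-type behaviour still yields a torsion-free situation. So I would rather invoke the hypothesis of Section 5.2 directly: the paper asserts that under the hypotheses $H^*_1(X;\mathbb F_p)\neq\{0\}$, and I will use that, treating its justification as the main obstacle.

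Next, with $H^*_1(X;\mathbb F_p)\neq\{0\}$, set $k=\inf\{i : H^i_1(X;\mathbb F_p)\neq 0\}$ and pick a nonzero $\beta=s\alpha\in H^k_1(X;\mathbb F_p)$. Lemma 5.2.1 gives $\sigma(\alpha)\neq 0$ (and $\sigma(\beta)\neq 0$). This is exactly the hypothesis of Theorem 1, and Corollary 3.3.1 supplies sequences $(\chi_n)$ in $H^{odd}\bB(A,d_A)$ and $(\psi_n)$ in $H^{even}\bB(A,d_A)$, all nonzero, with the coproduct formulas of Remark 3.1.8.

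Finally, Proposition 4.1.2 (via the identification $H^*(\Omega X;\mathbb F_p)\cong H^*\bB(A,d_A)$ of Proposition 4.1.1 and McCleary's argument) gives unbounded growth of $\dim H^n(\Omega X;\mathbb F_p)$. For the free loop space, the cycles $z_{r,s}$ of Lemma 4.2.3 map under $H^*\Phi$ to $\zeta_{r,s}\neq 0$, and Proposition 4.2.4 together with Proposition 4.2.2 gives unbounded growth of $\dim H^n(LX;\mathbb F_p)$. The main obstacle remains the first step, ensuring the existence of $p$-torsion, since the remaining steps are direct citations.
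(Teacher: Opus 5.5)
Everything after your first step is the paper's own argument: take $\beta=s\alpha$ in the lowest degree $k$ with $H^k_1(X;\mathbb F_p)\neq 0$, get $\sigma(\alpha)\neq 0$ from Lemma 5.2.1, and conclude with Theorem 2 (Corollary 3.3.1 and Propositions 4.1.2, 4.2.4). The gap is the first step. You do not prove it; you take it from the sentence before Lemma 5.2.1 (``In this case $H_1^*(X;\mathbb F_p)\neq\{0\}$''). The paper gives no justification for that sentence, and it is false.

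Your remark about divided-power denominators is exactly how it fails. Let $X=S^2\cup_{p\eta}e^4$ be the mapping cone of $p$ times the Hopf map $\eta\colon S^3\to S^2$.
\begin{itemize}
\item $H^*(X;\mathbb Z)=\mathbb Z\{1,x,y\}$ with $|x|=2$, $|y|=4$, and it is torsion free.
\item Since $p\eta$ has Hopf invariant $\pm p$, we may choose $y$ so that $x^2=p\,y$.
\item Hence $H^*(X;\mathbb Q)=\mathbb Q[x]/(x^3)$ is monogenic.
\item In $H^*(X;\mathbb F_p)$ we have $x^2=0$ and $y$ is indecomposable, so two generators are needed.
\end{itemize}
So $X$ is a simply connected finite complex satisfying every hypothesis, yet $H^*_1(X;\mathbb F_p)=0$. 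There is no $\beta$ to feed into Lemma 5.2.1 or Theorem 2. Your sketched repair, that the Bockstein behaviour of the extra generator ``still yields a torsion-free situation'', confirms that there is no $p$-torsion rather than producing any.

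Consequently your argument, like the paper's, proves the Proposition only under the extra assumption that $H^*(X;\mathbb Z)$ has $p$-torsion. Without torsion, the machinery of Sections 3 and 4 can fail outright. Proposition 3.1.7 needs a Kraines sequence starting at an odd-degree cocycle $\theta$ with $\sigma(\mathrm{cls}(\theta))\neq 0$. In the example $H^{odd}(X;\mathbb F_p)=0$, so every such $\theta$ is a coboundary and $\sigma(\mathrm{cls}(\theta))=0$.

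Part (1) can be recovered by quoting McCleary's theorem, mentioned in the introduction, which only needs $H^*(X;\mathbb F_p)$ to be non-monogenic. Part (2) needs a genuinely separate argument in the torsion-free case. For this particular $X$, $C^*(X;\mathbb F_p)$ is formal for degree reasons, so the $\mathbb F_p$-formal case cited in the introduction applies. Nothing in the paper handles the torsion-free case in general.
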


\begin{proof}

 Let $k= \mbox{inf}\{i\quad \mbox{such that}\quad H_1^i(X; \mathbb F_p)\neq \{0\}$ and $\beta=s\alpha\in H_1^k(X; \mathbb F_p)$.

 By Lemma  5.2.1, $\beta$ and $\alpha$ are generators, $\sigma(\beta)\neq 0$  and $\sigma(\alpha)\neq 0$.
 
  We then apply   Theorem 2  to conclude.

\end{proof}

\subsection{End of the proof}

Let $\bk$ be a field of characteristic $p\geq 2$.  Then $\bk$ is a $\mathbb F_p$ vector space via  the canonical inclusion $\mathbb F_p \subset \bk$ and for each $i$
$$H^i(M, \bk)=\left(H^i(M;\mathbb F_p)\otimes \bk\right)\oplus s\mbox{Tor}(H^{i+1}(M;\mathbb F_p), \bk)\,.$$ The  second summand in the right hand term  being zero, the above results extend for any field $\bk$.

\footnotesize{

}
 \end{document}